\theoremstyle{plain}
\newtheorem{theorem}{Theorem}[section]
\newtheorem{lemma}[theorem]{Lemma}
\newtheorem{definition-theorem}[theorem]{Definition-Theorem}
\newtheorem{proposition}[theorem]{Proposition}
\newtheorem{corollary}[theorem]{Corollary}
\theoremstyle{definition}
\newtheorem{definition}[theorem]{Definition}
\newtheorem{example}[theorem]{Example}
\newtheorem{remark}[theorem]{Remark}
\newtheorem{notation}[theorem]{Notation}
\newcommand \bth[1] { \begin{theorem}\label{t#1} }
\newcommand \ble[1] { \begin{lemma}\label{l#1} }
\newcommand \bpr[1] { \begin{proposition}\label{p#1} }
\newcommand \bco[1] { \begin{corollary}\label{c#1} }
\newcommand \bde[1] { \begin{definition}\label{d#1}\rm }
\newcommand \bex[1] { \begin{example}\label{e#1}\rm }
\newcommand \bre[1] { \begin{remark}\label{r#1}\rm }
\newcommand \bnota[1] {\begin{notation}\label{n#1}\rm }
\newcommand {\ele} { \end{lemma} }
\newcommand {\epr} { \end{proposition} }
\newcommand {\eco} { \end{corollary} }
\newcommand {\ede} { \end{definition} }
\newcommand {\eex} { \end{example} }
\newcommand {\ere} { \end{remark} }
\newcommand {\enota} { \end{notation} }
\def \calA  {{\mathcal{A}}}           
\def \calH  {{\mathcal{H}}}
\def \calS {{\mathcal{S}}}
\def \St { { \mathrm{St}}}
\DeclareMathOperator \tr { {\mathrm{tr}} }
\DeclareMathOperator \End { {\mathrm{End}} }
\DeclareMathOperator \Hom { {\mathrm{Hom}} }
\DeclareMathOperator \ind{ {\mathrm{ind}}}
\DeclareMathOperator \sgn { { \mathrm{sgn}}}
\DeclareMathOperator \vol { {\mathrm{vol}} }
\renewcommand \max { {\mathrm{max}} }
\begin{document}
\setlength{\baselineskip}{1.2\baselineskip}

\title[Bernstein-Zelevinsky derivatives]
{Bernstein-Zelevinsky derivatives: a Hecke algebra approach}

\author[Kei Yuen Chan]{Kei Yuen Chan}
\address{ Korteweg-de Vries Institute for Mathematics, Universiteit van Amsterdam}
\email{keiyuen.chan@gmail.com, K.Y.Chan@uva.nl}

\author[Gordan Savin]{Gordan Savin}
\address{
Department of Mathematics \\
University of Utah}
\email{savin@math.utah.edu}

\begin{abstract} 
Let $G$ be a general linear group over a $p$-adic field. 
It is well known that Bernstein components of the category of smooth representations of $G$ are described by Hecke algebras arising from Bushnell-Kutzko types. 
We describe the Bernstein components of the Gelfand-Graev representation of $G$ by explicit Hecke algebra modules. This result is used to translate the theory of 
Bernstein-Zelevinsky derivatives in the language of representations of Hecke algebras, where  we develop a comprehensive theory. 
\end{abstract}
\maketitle 

\section{Introduction}

Bernstein-Zelevinsky derivatives were first introduced and studied in \cite{BZ} and \cite{Ze} and are an important tool in the representation theory of general linear groups over 
$p$-adic fields. One goal of this paper is to formulate functors for Hecke algebras that correspond to Bernstein-Zelevinsky derivatives and show that Bernstein-Zelevinsky derivatives can be determined from the corresponding Hecke algebra functors.  
An advantage of our approach is that the some representations, such as generalized Speh modules,  have explicit description in terms of the corresponding Hecke algebra modules, rather than just being defined as Langlands quotients. Thus, 
as an application of our study, we compute the Bernstein-Zelevinsky derivatives of generalized Speh modules, by a method which does not use the determinantal formula of Tadi\'c \cite{Ta} and Lapid-M\'inguez \cite{LM} or Kazhdan-Lusztig polynomials \cite{Ze2, CG}.

\subsection{Main results}  
Let $F$ be a $p$-adic field. 
Let $G$ be a general linear group over $F$. The category  $\mathfrak{R}(G)$ of smooth representations of $G$ can be described by Hecke algebras arising from 
Bushnell-Kutzko types \cite{BK}. In order to keep notation simple, we shall only discuss the simple types. 
This restriction will present no loss of generality, as far as the theory of Bernstein-Zelevinsky derivatives is concerned. 
So let $G$ (or $G_n$ if we need to distinguish between the
general linear groups of different rank)  be the group $GL_{nr}(F)$ where $r$ is a fixed integer. The group $G$ contains a Levi group $L=GL_r(F)^n$. Let $\delta$ be 
a supercuspidal representation of $GL_r(F)$. Then $\tau=\delta\boxtimes \ldots \boxtimes \delta$ is a supercuspidal representation of $L$. The pair 
$\mathfrak s=[L, \tau]$ (or $\mathfrak s_n$) determines a Bernstein component $\mathfrak{R}^{\mathfrak s}(G)$ of $\mathfrak{R}(G)$. 

A type is a representation $\rho$ of an open compact subgroup $K$ of $G$. If $\pi$ is a smooth representation of $G$, then $\pi_{\rho}=(\pi\otimes \rho^{\vee})^K$ is naturally a 
module for $\calH(G,\rho)$, the Hecke algebra of $\End(\rho^{\vee})$-valued functions on $G$. A type $\rho$ is a Bushnell-Kutzko type if $\pi \mapsto \pi_{\rho}$ is an equivalence 
of $\mathfrak{R}^{\mathfrak s}(G)$ and the category of $\calH(G,\rho)$-modules. For $\mathfrak s_n$, described above, such type $\rho_n$ is constructed in \cite{BK} and  
 in \cite{Wa} in the tame case. 
Moreover, it is proved that $\calH(G,\rho_n)$ is isomorphic to $\calH_n$, the Iwahori Hecke algebra of $GL_n(F')$,
 where $F'$ is an extension of $F$ depending on $\rho_n$.
The Weyl group of $GL_n(F')$ is isomorphic to the group of permutation matrices $S_n$, 
and  $\calH_n$ has a finite-dimensional subalgebra $\calH_{S_n}$ with a basis $T_w$ of characteristic functions of double cosets of $w\in S_n$.  
The algebra $\mathcal H_{S_n}$ has a one dimensional representation $\sgn$, $T_w\mapsto (-1)^{l(w)}$,  where $l$ is the length function on $S_n$.

 Let $U$ be the unipotent subgroup of all strictly upper triangular matrices in $G$.  Let $\psi$ be a Whittaker character of $U$. 
 One of the main results of this paper is a description of the Bernstein components of the Gelfand-Graev representation $\mathrm{ind}_U^G \psi$ in terms of the Hecke algebra action:

\begin{theorem} (Theorem  \ref{thm GG}) \label{thm intro realize}
The $\mathcal H_n$-module $(\mathrm{ind}_{U}^{G}\psi)_{\rho_n}$ is isomorphic to $\mathcal H_n \otimes_{\mathcal H_{S_n}} \mathrm{sgn}$.
\end{theorem}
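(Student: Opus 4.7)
The strategy is to show that both $\calH_n$-modules corepresent the same Whittaker-type functor on $\mathfrak{R}^{\mathfrak{s}_n}(G)$, and then conclude by Yoneda. For any $\pi\in\mathfrak{R}^{\mathfrak{s}_n}(G)$, Frobenius reciprocity for compact induction combined with the Bushnell--Kutzko equivalence $\pi\mapsto\pi_{\rho_n}$ gives
$$\Hom_{\calH_n}\!\bigl((\mathrm{ind}_U^G\psi)_{\rho_n},\,\pi_{\rho_n}\bigr)\;\cong\;\Hom_G(\mathrm{ind}_U^G\psi,\pi)\;\cong\;\Hom_U(\psi,\,\pi|_U),$$
while tensor-Hom adjunction yields
$$\Hom_{\calH_n}\!\bigl(\calH_n\otimes_{\calH_{S_n}}\sgn,\,M\bigr)\;\cong\;\Hom_{\calH_{S_n}}(\sgn,M)=M^{\sgn}$$
for any $\calH_n$-module $M$. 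Thus the theorem reduces to producing a natural isomorphism of functors $\Hom_U(\psi,\pi|_U)\cong(\pi_{\rho_n})^{\sgn}$ on the Bernstein component.

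To build this comparison I would compute $(\mathrm{ind}_U^G\psi)_{\rho_n}=\Hom_K(\rho_n,\mathrm{ind}_U^G\psi)$ by a Mackey decomposition over $U\backslash G/K$, using the Iwahori-type factorisation of the simple type $(K,\rho_n)$. Only double cosets indexed by a lift of the (affine) Weyl group contribute, and on each such coset $UwK$ the local $\Hom$-space $\Hom_{U\cap wKw^{-1}}(\psi,\rho_n^{w})$ is at most one-dimensional. This produces a natural basis of $(\mathrm{ind}_U^G\psi)_{\rho_n}$ in bijection with the standard basis of $\calH_n\otimes_{\calH_{S_n}}\sgn$. One must then compare the Hecke actions: Bernstein-lattice generators act compatibly by translation via the Iwasawa decomposition $G=UTK$, and for the simple reflection generators $T_{s_i}\in\calH_{S_n}$ a direct computation using the Iwahori-Hecke quadratic relation $(T_{s_i}+1)(T_{s_i}-q)=0$ together with the change of variables $u\mapsto s_iu s_i^{-1}$ shows that nondegeneracy of $\psi$ forces $T_{s_i}$ to act by $-1$ on the distinguished generator, pinning down the sign character.

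The main obstacle is precisely this sign computation, and its uniform treatment across all simple types. In the Iwahori specialisation ($r=1$ and $\delta=\triv$, where $K$ is an Iwahori and $\rho_n$ is trivial), the result is the classical theorem of Borel, Casselman, and Rogawski. The general case requires tracking how the Bushnell--Kutzko isomorphism $\calH(G,\rho_n)\cong\calH_n$ transports the Whittaker structure: one must check that the natural normalisation of the cover, already respecting the parahoric subalgebra $\calH_{S_n}\sub\calH_n$, also respects the Whittaker averaging, so that the classical sign carries over. Once the natural transformation is constructed on irreducibles (where dimensions agree by Gelfand--Kazhdan multiplicity one), extending to the entire Bernstein component is formal: projectivity of the Gelfand--Graev representation on the group side, and flatness of $\calH_n$ over $\calH_{S_n}$ on the Hecke side, supply the required exactness.
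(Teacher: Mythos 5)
Your proposal takes the explicit, computational route (Mackey decomposition of $(\mathrm{ind}_U^G\psi)_{\rho_n}$ over $U\backslash G/K$ plus a direct determination of the Hecke action), which is the strategy the paper attributes to \cite{CS} for the Iwahori component and deliberately avoids here; the paper instead characterizes the module abstractly via Theorem \ref{T:hecke}: projectivity of $\mathrm{ind}_U^G\psi$ (appendix), finite generation of its Bernstein components (Bushnell--Henniart), Gelfand--Kazhdan multiplicity one on principal series, and the fact that the twisted Steinberg module is a quotient, combined with Swan's theorem and the classification in Lemma \ref{L:structure} of all $\mathcal H_n$-module structures on $\mathcal A_n$. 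As written, your sketch has genuine gaps. First, the opening reduction rests on a false adjunction: $U$ is closed but not open, so $\mathrm{ind}_U^G$ is not left adjoint to restriction, and $\Hom_G(\mathrm{ind}_U^G\psi,\pi)$ is not $\Hom_U(\psi,\pi|_U)$ --- the latter is the space of $(U,\psi)$-eigenvectors in $\pi$, which vanishes for typical smooth $\pi$, whereas the former computes (dual) Whittaker functionals. The correct identification passes through the smooth dual, $\Hom_G(\mathrm{ind}_U^G\psi,V^{\vee})\cong\Hom_G(V,\mathrm{Ind}_U^G\bar\psi)\cong\Hom_{\mathbb C}(V_{U,\bar\psi},\mathbb C)$, needs admissibility to come back from duals, and is exactly the chain in Lemma \ref{lem GG} --- which, note, already uses Theorem \ref{thm GG} as an input, so it cannot be recycled to prove it.

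Second, the heart of your argument --- that only cosets $UwK$ indexed by the extended affine Weyl group contribute, that each local space $\Hom_{U\cap wKw^{-1}}(\psi,{}^w\rho_n)$ is at most one-dimensional, and that under the Bushnell--Kutzko isomorphism $\mathcal H(G,\rho_n)\cong\mathcal H_n$ the generators $T_{s_i}$ act by $-1$ on the distinguished vector --- is asserted rather than proved, and for a general simple type ($K$ not a parahoric, the isomorphism involving nontrivial support and normalization data) this is a substantial computation, not a transport of the Borel--Casselman case; you flag it yourself as the main obstacle. Finally, the closing claim that extending from irreducibles to the whole component is ``formal'' is not correct: agreement of $\dim\Hom$ against all irreducibles does not determine a finitely generated projective module, and Yoneda requires a natural isomorphism of Hom-functors on all modules, which you have not constructed. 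This is precisely the gap the paper closes structurally: projectivity and finite generation give, via Swan's theorem, that the module is free over $\mathcal A_n$, multiplicity one forces the rank to be one, and Lemma \ref{L:structure} shows the only possibilities are $\mathcal H_n\otimes_{\mathcal H_{S_n}}\mathrm{sgn}$ and $\mathcal H_n\otimes_{\mathcal H_{S_n}}1$, the Steinberg quotient selecting the sign. Either carry out the full Mackey and sign computation for general simple types, or supply an argument of this structural kind; the proposal currently contains neither.
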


This theorem is proved in \cite{CS} for the component consisting of representations generated by their Iwahori-fixed vectors, by an explicit computation. 
 Here we give a more abstract proof using projectivity of $\mathrm{ind}_{U}^{G}\psi$ and that the Bernstein components of 
$\mathrm{ind}_U^G \psi$ are finitely generated, a result of Bushnell and Henniart \cite{BH}. Our result is therefore a refinement of theirs for the general linear group. 
Projectivity of $\mathrm{ind}_{U}^{G}\psi$ was proved by Prasad in \cite{Pr} by an argument very specific to general linear groups. In the appendix we prove projectivity 
of the Gelfand-Graev representation  in a very general setting.

Theorem \ref{thm intro realize} plays an important role in the formulation of the Bernstein-Zelevinsky derivatives in the language of Hecke algebras.  
  To that end, let 
\[ 
   \mathbf S_n=  (\sum_{w\in S_n} (1/q)^{l(w)})^{-1}  \sum_{w\in S_n} (-1/q)^{l(w)} T_w \in \mathcal H_n.  
   \] 
    If $\sigma$ is an $\mathcal H_{n}$-module, then $\mathbf S_n(\sigma)$ is the $\sgn$-isotypic subspace of $\sigma$.   
For every $i=1, \ldots, n-1$ we have  an embedding of the Hecke algebra  $\mathcal H_{n-i} \otimes \mathcal H_i$ into $ \mathcal H_n$.  In particular, 
the map $h\mapsto h\otimes 1$ realizes $\mathcal H_{n-i}$ as a subalgebra of $\mathcal H_{n}$. Let $\mathbf S_i^n$ be the image in 
$\mathcal H_n$ of $1\otimes \mathbf S_i$, where $\mathbf S_i$ is the sign projector in $\mathcal H_{i}$.  For every $\mathcal H_n$-module $\sigma$, 

\begin{align} \label{eqn intro ZD Hecke} 
\mathbf{BZ}_i(\sigma):=\mathbf S_i^n (\sigma) 
\end{align}
is naturally an $\mathcal H_{n-i}$-module. This is the $i$-th derivative of $\sigma$. Let $\pi$ be a smooth representation of $G_n$, and $\pi^{(l)}$ its $l$-th Bernstein-Zelevinsky
derivative. If $\pi$ is in  $\mathfrak{R}^{\mathfrak s_n}(G_n)$, then $\pi^{(l)}=0$ unless $l$ is a multiple of $r$, and then $\pi^{(ir)}$ is an object  in 
$\mathfrak{R}^{\mathfrak s_{n-i}}(G_{n-i})$.

\begin{theorem} (Theorem \ref{thm bz aha}) \label{thm intro bz aha}
Let $\pi$ be an admissible representation of $G_n$ in  $\mathfrak{R}^{\mathfrak s_n}(G_n)$.  
Let $\mathbf{BZ}_i$ be the functor defined in (\ref{eqn intro ZD Hecke}). There is a functorial isomorphism of $\mathcal H_{n-i}$-modules
\[    
 (\pi^{(ir)})_{\rho_{_{n-i}}} \cong\mathbf{BZ}_i(\pi_{\rho_n}). 
\]
\end{theorem}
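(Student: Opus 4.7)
The plan is to decompose the functor $\pi \mapsto \pi^{(ir)}$ into a Jacquet step followed by a Whittaker step on a smaller factor, and to match each piece with its Hecke-algebra counterpart, using Bushnell-Kutzko cover theory for the first step and Theorem \ref{thm intro realize} for the second.

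First I would rewrite the derivative as $\pi^{(ir)} \cong (r^G_M \pi)_{U', \psi'}$, where $M = G_{n-i} \times G_i$ is the block-diagonal Levi (with $G_i = GL_{ir}(F)$) and $(U', \psi')$ denote the upper-triangular unipotent subgroup and the standard Whittaker character of $G_i$. This is immediate from the standard definition $\pi^{(ir)} = \pi_{U_M U',\, \psi^{(ir)}}$ together with associativity of unipotent coinvariants. Thus after applying $(\cdot)_{\rho_{n-i}}$, the derivative becomes the $\rho_{n-i}$-isotypic part of the Whittaker coinvariants of $r^G_M \pi$ along the $G_i$-factor.

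Second I would appeal to Bushnell-Kutzko cover theory: the simple type $\rho_n$ is a $G$-cover of the product type $\rho_{n-i} \boxtimes \rho_i$ of $M$, and the induced embedding of Hecke algebras is precisely the $\mathcal{H}_{n-i} \otimes \mathcal{H}_i \hookrightarrow \mathcal{H}_n$ appearing in the statement of the theorem. The standard consequence of the cover property is a functorial isomorphism of $\mathcal{H}_{n-i} \otimes \mathcal{H}_i$-modules
\[
(r^G_M \pi)_{\rho_{n-i} \boxtimes \rho_i} \;\cong\; \pi_{\rho_n}|_{\mathcal{H}_{n-i} \otimes \mathcal{H}_i}.
\]

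Third I would apply Theorem \ref{thm intro realize} to the group $G_i$ to identify the Whittaker functor on the $G_i$-factor with the sign idempotent $\mathbf{S}_i \in \mathcal{H}_i$. Since $(\mathrm{ind}_{U'}^{G_i} \psi')_{\rho_i} \cong \mathcal{H}_i \otimes_{\mathcal{H}_{S_i}} \sgn$, Frobenius reciprocity combined with the tensor-hom adjunction gives $\sigma^{U', \psi'} \cong \mathbf{S}_i(\sigma_{\rho_i})$ for every $\sigma \in \mathfrak{R}^{\mathfrak{s}_i}(G_i)$. For admissible $\sigma$, the natural map $\sigma^{U', \psi'} \to \sigma_{U', \psi'}$ is an isomorphism (by a duality argument using the anti-involution on $\mathcal{H}_i$, under which $\mathbf{S}_i$ is self-adjoint), so $\sigma_{U', \psi'} \cong \mathbf{S}_i(\sigma_{\rho_i})$ functorially as well. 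Combining the three steps, the type-functor image of $(r^G_M \pi)_{U', \psi'}$ is computed Hecke-theoretically by letting $1 \otimes \mathbf{S}_i = \mathbf{S}_i^n$ act on $\pi_{\rho_n}$, yielding $(\pi^{(ir)})_{\rho_{n-i}} \cong \mathbf{S}_i^n(\pi_{\rho_n}) = \mathbf{BZ}_i(\pi_{\rho_n})$, naturally in $\pi$.

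The main obstacle I anticipate is Step 3: Theorem \ref{thm intro realize} most naturally describes Whittaker \emph{invariants} via Frobenius reciprocity, while the BZ derivative is defined by Whittaker \emph{coinvariants}; bridging the two functorially requires the admissibility/duality argument just sketched, or equivalently an independent verification that $\mathcal{H}_i \otimes_{\mathcal{H}_{S_i}} \sgn$ also represents the Whittaker-coinvariants functor on the relevant Bernstein block. A secondary technical difficulty in Step 2 is confirming that the Hecke-algebra inclusion arising from the cover structure of $\rho_n$ over $\rho_{n-i} \boxtimes \rho_i$ matches the specific embedding $\mathcal{H}_{n-i} \otimes \mathcal{H}_i \hookrightarrow \mathcal{H}_n$ used to define $\mathbf{S}_i^n$; this should be a consequence of the explicit construction of the simple types, but is a compatibility that needs to be checked with care.
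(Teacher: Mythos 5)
Your overall strategy is the same as the paper's: Step 1 is the definition used in Section 4.2, your Step 2 is exactly Proposition \ref{prop jacquet} (the cover property, with the compatibility of embeddings you worry about settled by the diagram quoted from \cite{BK}, Theorem (7.6.20)), and Step 3 is the role played by Theorem \ref{thm functorial}. The genuine gap is in your execution of Step 3. First, the adjunction you invoke is not available: $U'$ is closed but not open in $G_i$, so compact induction $\mathrm{ind}_{U'}^{G_i}$ is not left adjoint to restriction, and $\Hom_{G_i}(\mathrm{ind}_{U'}^{G_i}\psi',\sigma)$ cannot be identified with the $\psi'$-eigenspace $\sigma^{U',\psi'}$ (there is not even a natural ``delta function'' in $\mathrm{ind}_{U'}^{G_i}\psi'$ to evaluate against, since such a function is not smooth). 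More seriously, the bridge you propose from invariants to coinvariants is false: for admissible $\sigma$ the natural map $\sigma^{U',\psi'}\to\sigma_{U',\psi'}$ is essentially never an isomorphism. For the Steinberg representation of $GL_2(F)$, a vector with $\pi(u)v=\psi(u)v$ for all $u$ would correspond in the Kirillov model to a locally constant function supported at the single non-open point $1\in F^{\times}$, hence $v=0$; so the $\psi$-invariants vanish while the $\psi$-coinvariants are one-dimensional. No self-adjointness of $\mathbf S_i$ can repair this, because the discrepancy occurs on the $p$-adic side before any Hecke algebra enters.

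The correct bridge --- and this is the actual content of the paper's Lemma \ref{lem GG} proving Theorem \ref{thm functorial} --- goes through smooth duals rather than invariants: for every finite-dimensional $X$ one has $\Hom_{\mathbb C}(\sigma_{U',\bar\psi'},X)\cong\Hom_{G_i}(\sigma,\mathrm{Ind}_{U'}^{G_i}(X\boxtimes\bar\psi'))\cong\Hom_{G_i}(\mathrm{ind}_{U'}^{G_i}(X^*\boxtimes\psi'),\sigma^{\vee})$, and then one applies Theorem \ref{thm GG} for the \emph{dual} type $\rho_i^{\vee}$, Frobenius reciprocity for Hecke algebras, and the identification $\sigma^{\vee}_{\rho_i^{\vee}}\cong(\sigma_{\rho_i})^{*}$, which is compatible with the anti-involution $T_j\mapsto T_j$ (so the sign character is preserved); admissibility makes both sides finite-dimensional, and Yoneda then produces the functorial isomorphism $\mathbf S_i(\sigma_{\rho_i})\cong\sigma_{U',\bar\psi'}$. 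Note the two ingredients this route requires which your sketch omits: the statement of Theorem \ref{thm GG} for the dual type, and the compatibility of the two $*$-involutions, both of which the paper checks separately. With your Steps 1 and 2 retained and Step 3 replaced by this duality argument, your proof coincides with the paper's.
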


One can similarly formulate Bernstein-Zelevinsky derivatives for graded Hecke algebras. We check in Sections \ref{s bz gha}  and  \ref{s bz ghaII} that Bernstein-Zelevinsky derivatives of affine Hecke algebras and graded Hecke algebras agree under the Lusztig's reductions. A reason for formulating the Bernstein-Zelevinsky derivatives for graded Hecke algebras is that one can apply representation theory of symmetric groups, in particular the Littlewood-Richardson rule, to compute the Bernstein-Zelevinsky derivatives of 
 generalized Speh representations, see Section \ref{s bz speh}, for details.

\subsection{Acknowledgements} 
This work was initiated during the Sphericity 2016 Conference in Germany. The authors would like to thank the organizers for providing the excellent environment for discussions. 
The authors would like to thank a referee for an informative report. 
The first author was supported by the Croucher Postdoctoral Fellowship. The second author was supported in part by NSF grant DMS-1359774.

\section{Affine Hecke algebra} \label{S:hecke} 

\subsection{}
Let $\mathcal H_n$ be the Iwahori-Hecke algebra of $GL(n)$ over the $p$-adic field $F'$. As an abstract algebra, $\mathcal H_n$ is generated by elements 
$T_1, \ldots ,T_{n-1}$  and by the algebra 
$\mathcal A_n= \mathbb C[x_1^{\pm 1}, \ldots, x_n^{\pm1}]$ of Laurent polynomials. The algebra $\calA_n$ is isomorphic to the group algebra of the lattice $\mathbb Z^n$, as follows.  
The group algebra is spanned by the elements $\theta_x$ where $x\in \mathbb Z^n$ with the multiplication $\theta_x \cdot \theta_y= \theta_{x+y}$. We can identify the two algebras by 
$\theta_x=x_1^{m_1} \cdots x_n^{m_n}$ where $x=(m_1, \ldots , m_n)\in \mathbb Z^n$.   We shall use both notations for elements in $\calA_n$ at our convenience. 
The elements $T_j$ satisfy the quadratic relation 
$(T_j+1)(T_j-q)=0$ (and braid relations) and the relationship between $T_j$ and $f\in \mathcal A_n$ is given by 
\begin{equation}\label{E:hecke} 
T_j f - f^{s_j} T_j = (q-1) x_j\frac{ f-f^{s_j}}{x_j-x_{j+1}} 
\end{equation} 
where $s_j$ is the permutation $(j,j+1)$ and $f^{s_j}$ is obtained from $f$ by permuting $x_j$ and $x_{j+1}$. 
The Weyl group of $GL(n)$ is isomorphic to the group of permutations $S_n$, and the center $\mathcal Z_n$ of $\mathcal H_n$ is equal to 
the subalgebra of $S_n$-invariant Laurent polynomials in $\mathcal A_n$.   We shall use the fact that $\mathcal A_n$ is a free $\mathcal Z_n$-module of rank $|S_n|$. Let 
$\mathcal H_{S_n}$ be the subalgebra of $\mathcal H_n$ generated by the elements $T_j$, $j=1,\ldots, n-1$. It is a finite algebra  spanned by elements $T_w$, $w\in S_n$, 
where $T_w$ is a product of $T_j$ as given by a shortest expression of $w$ as a product of simple reflections. In particular, the dimension of 
$\mathcal H_{S_n}$ is $|S_n|$. We shall also use the fact that 
the multiplication in $\calH_n$ of elements in $\calA_n$ and $\mathcal H_{S_n}$ gives isomorphisms 
\[ 
\calH_n \cong \calA_n\otimes_{\mathbb C}  \mathcal H_{S_n}  \cong \mathcal H_{S_n}\otimes_{\mathbb C}  \calA_n. 
\] 
The algebra $\mathcal H_{S_n}$ has two one-dimensional  
representations: the trivial,  where $T_j=q$ for all $j$,  and the sign representation,  where $T_j=-1$ for all $j$. A twisted Steinberg representation is a one-dimensional 
representation of $\mathcal H_n$ such that its restriction to $\mathcal H_{S_n}$  is the sign representation. 
This section is devoted to the proof of the following theorem. 

\begin{theorem} \label{T:hecke} 
Let $\Pi$ be an $\mathcal H_n$-module such that: 
\begin{itemize} 
\item $\Pi$ is projective and finitely generated. 
\item $\dim\mathrm{Hom}_{\mathcal H_n} (\Pi, \pi) \leq 1$ for an irreducible principal series representation $\pi$. 
\item A twisted Steinberg representation is a quotient of $\Pi$. 
\end{itemize} 
Then $\Pi$ is isomorphic to $\mathcal H_n\otimes_{\mathcal H_{S_n}}\mathrm{sgn}$.
\end{theorem}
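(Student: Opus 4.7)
The plan is to build an $\calH_n$-linear map $\phi\colon M\to\Pi$, where $M:=\calH_n\otimes_{\calH_{S_n}}\mathrm{sgn}$, and show it is an isomorphism by a rank argument over the center $\calZ_n$.

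First, $M$ is canonically isomorphic to $\calH_n\mathbf{S}_n$, hence a direct summand of the regular module and so projective. Using the Bernstein relation (\ref{E:hecke}) one identifies $M$ with $\calA_n\cdot 1_M$ (free of rank one as an $\calA_n$-module) and computes $\End_{\calH_n}(M)\cong\calZ_n$; since $\calZ_n$ is an integral domain, $\mathbf{S}_n$ is a primitive idempotent in $\calH_n$, so $M$ is indecomposable. Because $\calH_n$ is free of finite rank over $\calZ_n$, any finitely generated projective $\calH_n$-module is locally free of constant rank over $\calZ_n$, and $\rank_{\calZ_n}M=|S_n|$. To construct $\phi$, let $f\colon\Pi\twoheadrightarrow\mathrm{St}$ be the hypothesized surjection, pick $v\in\Pi$ with $f(v)$ a generator of $\mathrm{St}$, and set $w:=\mathbf{S}_n v$. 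Since $\mathrm{St}|_{\calH_{S_n}}=\mathrm{sgn}$, one has $f(w)=f(v)\neq 0$ and $T_jw=-w$ for each simple reflection; the universal property of induction gives a unique $\calH_n$-linear map $\phi\colon M\to\Pi$ with $\phi(1\otimes 1)=w$, and $f\circ\phi$ coincides with the canonical surjection $g\colon M\twoheadrightarrow\mathrm{St}$.

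To verify $\phi$ is an isomorphism, I note that at a generic $\chi\in\mathrm{Spec}\,\calZ_n$ the algebra $\calH_n\otimes_{\calZ_n}\mathbb{C}_\chi$ is a matrix algebra whose unique simple is the irreducible principal series $\pi_\chi$ of dimension $|S_n|$. Frobenius reciprocity gives $\dim\Hom(M,\pi_\chi)=\dim\pi_\chi^{\mathrm{sgn}}=1$; the hypothesis on $\Pi$ bounds $\dim\Hom(\Pi,\pi_\chi)$ by one, and the Steinberg quotient together with the connectedness of $\mathrm{Spec}\,\calZ_n$ (an integral domain) forces equality, giving $\rank_{\calZ_n}\Pi=|S_n|$. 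Thus $\phi$ is a map between projective $\calZ_n$-modules of the same rank; by Nakayama, checking that $\phi$ is an isomorphism reduces to checking that $\phi_\chi\colon M_\chi\to\Pi_\chi$ is an isomorphism at every maximal ideal $\chi$. At generic $\chi$, both fibers are $\pi_\chi$ and $\phi_\chi$ is nonzero by $\calZ_n$-torsion-freeness of $M$, hence an isomorphism by Schur. At $\chi=\chi_{\mathrm{St}}$, both fibers are projective modules over the Artinian specialization of dimension $|S_n|$ admitting $\mathrm{St}$ as a quotient; since $\mathbf{S}_n(\calH_n\otimes_{\calZ_n}\mathbb{C}_{\chi_{\mathrm{St}}})\mathbf{S}_n=\mathbb{C}$, the fiber $M_{\chi_{\mathrm{St}}}$ is the indecomposable projective cover of $\mathrm{St}$, and $\Pi_{\chi_{\mathrm{St}}}$ contains it as a summand with matching dimension, so the two coincide and $\phi_{\chi_{\mathrm{St}}}$ is iso (being a lift of the canonical map inducing an iso on the simple top).

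The main obstacle is verifying $\phi_\chi$ is an isomorphism at the remaining central characters, namely those $\chi$ with $\pi_\chi$ reducible but $\chi\neq\chi_{\mathrm{St}}$. The hypothesis gives no direct bound on $\dim\Hom(\Pi,\pi)$ there, so one must argue via the same primitivity $\mathbf{S}_n(\calH_n\otimes_{\calZ_n}\mathbb{C}_\chi)\mathbf{S}_n=\mathbb{C}$ (keeping $M_\chi$ an indecomposable projective cover in each specialized Artinian block), combined with the matching of dimensions and a projective-cover identification for $\Pi_\chi$. Once $\phi_\chi$ is an isomorphism at every maximal $\chi$ of $\calZ_n$, the vanishing of the kernel and cokernel of $\phi$ as finitely generated $\calZ_n$-modules is immediate, and the proof is complete.
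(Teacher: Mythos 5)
Your strategy stalls exactly where you say it does, and that obstacle is not a loose end but the heart of the theorem. The hypotheses constrain $\Pi$ only at central characters where the principal series is irreducible (multiplicity $\leq 1$ there) and through the single twisted Steinberg quotient; at a maximal ideal $\chi$ of $\mathcal Z_n$ with $\pi_\chi$ reducible and $\chi\neq\chi_{\mathrm{St}}$, knowing that $\Pi_\chi$ is a projective $\mathcal H_n\otimes_{\mathcal Z_n}\mathbb C_\chi$-module of dimension $|S_n|$ does not determine its isomorphism class: a priori it could be a sum of other indecomposable projectives of the specialized Artinian algebra with the same total dimension, and nothing in your construction of $\phi$ forces $\phi_\chi$ to be surjective there. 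Identifying $\Pi_\chi$ with $M_\chi$ at these bad fibers is essentially equivalent to pinning down the class of $\Pi$ in $K_0(\mathcal H_n)$, which is the genuinely hard content (compare the paper's remark pointing to Solleveld's $K$-theory computations). There are also unjustified steps in the parts you do carry out: at generic $\chi$, ``$\phi_\chi\neq 0$ by torsion-freeness of $M$'' is not an inference --- an injective map can land inside $\mathcal J_\chi\Pi$, and the relation $f\circ\phi=g$ gives no control at $\chi\neq\chi_{\mathrm{St}}$ because $\mathcal J_\chi$ acts invertibly on the one-dimensional Steinberg module; and the identity $\mathbf S_n(\mathcal H_n\otimes_{\mathcal Z_n}\mathbb C_\chi)\mathbf S_n=\mathbb C$ for every $\chi$ (equivalently $\mathbf S_n\mathcal A_n\mathbf S_n=\mathcal Z_n\mathbf S_n$, integrally and compatibly with base change) is asserted, not proved.

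The paper's proof avoids fiberwise analysis at reducible central characters altogether, and you may want to compare. First, projectivity plus finite generation gives that $\Pi$ is free as an $\mathcal A_n$-module (an adjunction shows $\Pi$ is $\mathcal A_n$-projective, then Swan's Laurent-polynomial version of Quillen--Suslin gives freeness), so $\Pi\cong\mathcal A_n^r$. Second, the multiplicity-one hypothesis at a single irreducible principal series $\pi$, combined with Lemma \ref{L:extension} (any nonsplit extension of $\pi$ is not killed by $\mathcal J$, proved using projectivity of $\mathcal H_n\otimes_{\mathcal H_{S_n}}\mathrm{sgn}$ and a dimension count), forces $\Pi/\mathcal J\Pi\cong\pi^{\oplus r}$ and hence $r=1$. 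Third, Lemma \ref{L:structure} classifies \emph{all} $\mathcal H_n$-module structures on $\mathcal A_n$ by an explicit rank-two computation with the relation (\ref{E:hecke}): the only possibilities are $\mathcal H_n\otimes_{\mathcal H_{S_n}}\mathrm{sgn}$ and $\mathcal H_n\otimes_{\mathcal H_{S_n}}1$, and the Steinberg quotient rules out the latter. So where you try to match fibers point by point over $\mathrm{Spec}\,\mathcal Z_n$, the paper upgrades local freeness over $\mathcal Z_n$ to freeness of rank one over $\mathcal A_n$ and then classifies the possible global module structures outright; to salvage your approach you would need an independent argument identifying $\Pi_\chi$ at every reducible, non-Steinberg $\chi$, which your hypotheses do not directly supply.
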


\begin{lemma} \label{L:free}
Let $P$ be a projective and finitely-generated ${\mathcal H_n}$-module. Then $P$ is free and finitely generated as an ${\mathcal A_n}$-module.
\end{lemma}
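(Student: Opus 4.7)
The plan is to exploit the structural fact already recorded in the text, namely that $\mathcal H_n \cong \mathcal A_n \otimes_{\mathbb C} \mathcal H_{S_n}$ as a left $\mathcal A_n$-module, so that $\mathcal H_n$ is free of rank $|S_n|$ over $\mathcal A_n$. From this, the argument splits naturally into a finite generation part, a projectivity transfer, and an appeal to a known theorem on Laurent polynomial rings.

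First I would handle finite generation and projectivity over $\mathcal A_n$ simultaneously. Since $P$ is finitely generated and projective over $\mathcal H_n$, it is a direct summand of some free module $\mathcal H_n^k$. Restricting scalars along the inclusion $\mathcal A_n \hookrightarrow \mathcal H_n$, the module $\mathcal H_n^k$ becomes free of rank $k|S_n|$ over $\mathcal A_n$. Hence $P$, being a direct summand of a finitely generated free $\mathcal A_n$-module, is finitely generated and projective over $\mathcal A_n$.

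The remaining point is that a finitely generated projective module over $\mathcal A_n = \mathbb C[x_1^{\pm 1}, \ldots, x_n^{\pm 1}]$ is automatically free. This is the Laurent polynomial version of the Quillen--Suslin theorem, due to Swan: every finitely generated projective module over a Laurent polynomial ring in finitely many variables over a field is free. Applying this to $P$ concludes the proof.

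The only genuine ingredient beyond bookkeeping is the Swan/Quillen--Suslin theorem for Laurent polynomials; everything else is a direct consequence of the freeness of $\mathcal H_n$ over $\mathcal A_n$. Thus the main obstacle is not the proof itself but correctly invoking this external result, and one should be careful to state the rank of $P$ over $\mathcal A_n$ only after freeness has been established.
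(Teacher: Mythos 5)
Your proof is correct and follows the same overall strategy as the paper: restrict scalars to $\mathcal A_n$, show $P$ is finitely generated and projective there, and conclude freeness from Swan's Laurent-polynomial version of the Quillen--Suslin theorem. The only difference is in how projectivity over $\mathcal A_n$ is transferred: you realize $P$ as a direct summand of $\mathcal H_n^k$ and use that $\mathcal H_n$ is free of rank $|S_n|$ over $\mathcal A_n$, so the restriction is a summand of a finitely generated free $\mathcal A_n$-module; the paper instead uses the adjunction $\mathrm{Hom}_{\mathcal A_n}(P|_{\mathcal A_n},\sigma)\cong\mathrm{Hom}_{\mathcal H_n}(P,\mathrm{Hom}_{\mathcal A_n}(\mathcal H_n,\sigma))$ together with exactness of the coinduction functor (again coming from freeness of $\mathcal H_n$ over $\mathcal A_n$). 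Your summand argument is the more elementary of the two and packages finite generation and projectivity in one step; the paper's adjunction argument has the mild advantage of proving projectivity of the restriction without invoking finite generation at all, which is then only needed for the appeal to Swan. Either way the substance is identical, and your closing caution about only assigning a rank after freeness is established is sensible.
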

\begin{proof}
Let $\sigma$ be an $\mathcal A_n$-module. Recall that we have a natural isomorphism
\[  
\mathrm{Hom}_{\mathcal A_n}(P|_{\mathcal A_n}, \sigma) \cong \mathrm{Hom}_{\mathcal H_n}(P, \mathrm{Hom}_{\mathcal A_n}({\mathcal H_n}, \sigma)) .
\]
Since $\mathcal H_n$ is a free $\mathcal A_n$-module, $ \mathrm{Hom}_{\mathcal A_n}(\mathcal H_n, \sigma)$ is exact in $\sigma$. 
Since $P$ is a projective $\mathcal H_n$-module, the above 
isomorphism implies that  $P$, viewed as an $\mathcal A_n$-module, is also projective.  It is clear that $P$ is finitely-generated $\mathcal A_n$-module, hence 
 $P$ is a free $\mathcal A_n$-module by a version of the Quillen-Suslin theorem for rings of Laurent polynomials due to Swan \cite{Sw}. 
\end{proof}

Lemma, combined with the first assumption on $\Pi$, implies that $\Pi\cong \mathcal A_n^r$ as an $\mathcal A_n$-module. 

\begin{lemma} \label{L:extension}
Let $\pi$ be an irreducible principal series module  of $\mathcal H_n$ i.e. an irreducible representation whose dimension is equal to the order of $S_n$. 
Let $\mathcal J$ be the annihilator of $\pi$ in the center $\mathcal Z_n$ of $\mathcal H_n$. 
Let  
\[  0 \rightarrow \pi'' \rightarrow \pi' \stackrel{g}{\rightarrow} \pi \rightarrow 0 
\]
be a non-split exact sequence $\mathcal H_n$-modules. Then $\mathcal J \pi' \neq 0$. 

\end{lemma}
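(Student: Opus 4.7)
The plan is to derive a contradiction from the assumption $\mathcal J \pi' = 0$. Under this assumption both $\pi$ and $\pi'$ descend to modules over the finite-dimensional quotient
\[
A := \mathcal H_n / \mathcal J \mathcal H_n,
\]
(note that $\mathcal J \subset \mathcal Z_n$, so $\mathcal J \mathcal H_n$ is a two-sided ideal). The goal is to show that $A$ is semisimple, indeed isomorphic to the matrix algebra $M_{|S_n|}(\mathbb C)$. Once this is established, every short exact sequence of $A$-modules splits; in particular the given sequence $0 \to \pi'' \to \pi' \to \pi \to 0$ splits as $A$-modules, hence also as $\mathcal H_n$-modules, contradicting non-splitness.

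The first step is the dimension count $\dim_{\mathbb C} A = |S_n|^2$. Using the decomposition $\mathcal H_n \cong \mathcal A_n \otimes_{\mathbb C} \mathcal H_{S_n}$ recalled at the start of this section, together with the facts that $\mathcal A_n$ is a free $\mathcal Z_n$-module of rank $|S_n|$ and $\dim_{\mathbb C} \mathcal H_{S_n} = |S_n|$, one sees that $\mathcal H_n$ is a free $\mathcal Z_n$-module of rank $|S_n|^2$. Since $\mathcal J$ is the annihilator in $\mathcal Z_n$ of the irreducible $\mathcal H_n$-module $\pi$ it is a maximal ideal, and $\mathcal Z_n / \mathcal J \cong \mathbb C$ because $\mathbb C$ is algebraically closed. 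Hence $\dim_{\mathbb C} A = |S_n|^2$.

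The second step invokes Artin--Wedderburn: for any finite-dimensional $\mathbb C$-algebra $A$, the bound $\sum_i d_i^2 \leq \dim_{\mathbb C} A$ on the dimensions $d_i$ of the simple $A$-modules is an equality if and only if $A$ is semisimple (immediate from $A / \mathrm{rad}(A) \cong \prod_i M_{d_i}(\mathbb C)$). Here $\pi$ is already a simple $A$-module of dimension $|S_n|$, so the inequality reads $|S_n|^2 \leq |S_n|^2$ and equality is forced. Thus $A$ is semisimple, $\pi$ is its unique simple module up to isomorphism, and $A \cong M_{|S_n|}(\mathbb C)$; the given sequence then splits as asserted. The only moderately delicate point in the whole argument is the rank calculation for $\mathcal H_n$ over $\mathcal Z_n$, which follows directly from the two structural isomorphisms recorded at the start of Section \ref{S:hecke}; everything else is standard semisimple-algebra theory.
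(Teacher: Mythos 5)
Your argument is correct, but it takes a genuinely different route from the paper. The paper exploits the projective module $P_{\mathrm{sgn}}=\mathcal H_n\otimes_{\mathcal H_{S_n}}\mathrm{sgn}$: it lifts a nonzero map $P_{\mathrm{sgn}}\to\pi$ through $g$, uses freeness of $\mathcal A_n$ over $\mathcal Z_n$ to compute $\dim_{\mathbb C}(P_{\mathrm{sgn}}/\mathcal J P_{\mathrm{sgn}})=|S_n|=\dim_{\mathbb C}\pi$, concludes that $P_{\mathrm{sgn}}/\mathcal J P_{\mathrm{sgn}}\cong\pi$, and then observes that if $\mathcal J\pi'=0$ the lifted map descends to a splitting. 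You instead show that the fiber algebra $A=\mathcal H_n/\mathcal J\mathcal H_n$ is all of $M_{|S_n|}(\mathbb C)$, by the rank count $\dim_{\mathbb C}A=|S_n|^2$ together with Artin--Wedderburn applied to the simple $A$-module $\pi$ of dimension $|S_n|$, and conclude that every extension of modules killed by $\mathcal J$ splits. Both proofs rest on the same essential inputs (freeness over $\mathcal Z_n$ and the dimension count at the maximal ideal $\mathcal J$, where maximality and $\mathcal Z_n/\mathcal J\cong\mathbb C$ come from Schur's lemma on the finite-dimensional irreducible $\pi$), but yours avoids $P_{\mathrm{sgn}}$ and its projectivity entirely and in fact proves something stronger: the category of $\mathcal H_n$-modules annihilated by $\mathcal J$ is semisimple with $\pi$ as its unique simple object, which immediately yields the paper's subsequent claim that $\Pi/\mathcal J\Pi$ is a direct sum of copies of $\pi$, without the ``easy application'' of the lemma invoked there. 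The paper's version, on the other hand, stays inside the circle of ideas already in play and produces the explicit isomorphism $P_{\mathrm{sgn}}/\mathcal J P_{\mathrm{sgn}}\cong\pi$, which is in the spirit of the rest of Section 2. One small point you should make explicit in your rank computation: the multiplication map $\mathcal A_n\otimes_{\mathbb C}\mathcal H_{S_n}\to\mathcal H_n$ is an isomorphism of left $\mathcal A_n$-modules, so $\{T_w\}_{w\in S_n}$ is an $\mathcal A_n$-basis of $\mathcal H_n$, and combining this with the freeness of $\mathcal A_n$ over $\mathcal Z_n$ of rank $|S_n|$ gives that $\mathcal H_n$ is $\mathcal Z_n$-free of rank $|S_n|^2$; this is exactly what makes $\dim_{\mathbb C}A=|S_n|^2$ (and not merely $\leq|S_n|^2$).
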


\begin{proof} We abbreviate $P_{\mathrm{sgn}}= \mathcal H_n \otimes_{\mathcal H_{S_n}}\mathrm{sgn}$.
 By the projectivity of $P_{\mathrm{sgn}}$  we have the following maps 
\[   P_{\mathrm{sgn}} \stackrel{f}{\rightarrow} \pi' \stackrel{g} {\rightarrow}\pi \rightarrow 0
\]
such that the composition $g\circ f$ is non-trivial.  Since $\mathcal J \pi=0$, the composition $g\circ f$ descends to a map from 
$P_{\mathrm{sgn}} /\mathcal JP_{\mathrm{sgn}}$ to $\pi$. Since 
\[\dim_{\mathbb C} ( P_{\mathrm{sgn}} /\mathcal JP_{\mathrm{sgn}}) = \dim_{\mathbb C}(\mathcal A_n\mathcal /\mathcal J\mathcal A_n)= |S_n|=\dim_{\mathbb C}(\pi)\] 
the composition $g\circ f$ descends to an isomorphism  $P_{\mathrm{sgn}} /\mathcal JP_{\mathrm{sgn}} \cong \pi$. If  $\mathcal J \pi' = 0$ then $f$ descends to 
a map from $P_{\mathrm{sgn}} /\mathcal JP_{\mathrm{sgn}}\cong \pi$ to $\pi'$, contradicting the assumption on the exact sequence. 
 \end{proof}

Let $\mathcal J$ and $\pi$ be as in the lemma. Then $\Pi/\mathcal J\Pi$ has a composition series such that any irreducible subquotient is isomorphic to $\pi$. Since 
$\Pi/\mathcal J\Pi$ is annihilated by $\mathcal J$, by an easy application of  Lemma \ref{L:extension},  it is 
 a direct sum of $r$ copies of $\pi$. Hence $r=1$, by the second assumption on $\Pi$. 

\begin{lemma} \label{L:structure}
Let $P$ be an $\mathcal H_n$-module isomorphic to $\mathcal A_n$, as an $\mathcal A_n$-module. Then 
$P$ is isomorphic to $\mathcal H_n \otimes_{\mathcal H_{S_n}} \mathrm{sgn}$  or  $ \mathcal H_n \otimes_{\mathcal H_{S_n}} \mathrm{1}$. 
\end{lemma}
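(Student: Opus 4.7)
The plan is to produce a vector $v\in P$ that generates $P$ over $\mathcal A_n$ and satisfies $T_j v=\epsilon v$ for all $j$ and some common $\epsilon\in\{q,-1\}$. Given such a $v$, the $\mathcal H_n$-homomorphism $\mathcal H_n\otimes_{\mathcal H_{S_n}}\chi\to P$ sending $1\otimes 1\mapsto v$ --- where $\chi=\mathrm{sgn}$ if $\epsilon=-1$ and $\chi=\mathbf 1$ if $\epsilon=q$ --- will be well-defined by the $\mathcal H_{S_n}$-eigenvalue condition on $v$, surjective because $\mathcal H_n v=\mathcal A_n v=P$, and therefore an isomorphism of $\mathcal A_n$-free rank-one modules, which is the conclusion of the lemma.

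To locate such a $v$, I would fix any $\mathcal A_n$-generator $v_0$ of $P$ and define $g_j\in\mathcal A_n$ by $T_j v_0=g_j v_0$. The quadratic relation $(T_j-q)(T_j+1)v_0=0$, combined with the Bernstein relation (\ref{E:hecke}), yields
\[
g_j\, g_j^{s_j}+(q-1)x_j\,\frac{g_j-g_j^{s_j}}{x_j-x_{j+1}}=(q-1)g_j+q \qquad(\star)
\]
in $\mathcal A_n$. I would then replace $v_0$ by $fv_0$ for a suitable unit $f\in\mathcal A_n^{\times}$ so that each $g_j$ becomes $s_j$-invariant. Once this holds, $(\star)$ collapses to $(g_j-q)(g_j+1)=0$ in the integral domain $\mathcal A_n$, forcing $g_j\in\{q,-1\}$. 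The braid relation $T_jT_{j+1}T_jv=T_{j+1}T_jT_{j+1}v$, computed on the resulting scalar eigenvector $v$ (the divided-difference corrections in (\ref{E:hecke}) vanish because each $g_j$ is a scalar), reduces to $g_j^2g_{j+1}=g_jg_{j+1}^2$, whence $g_jg_{j+1}(g_j-g_{j+1})=0$; since $q$ and $-1$ are nonzero, this forces $g_j=g_{j+1}$, so all $g_j$ take a common value $\epsilon\in\{q,-1\}$.

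The main obstacle is producing the unit $f$ that realizes the normalization step above. My plan is to analyze the sign- and trivial-isotypic subspaces $e_{\mathrm{sgn}}P$ and $e_{\mathbf 1}P$ (for the $\mathcal H_{S_n}$-action) as $\mathcal Z_n$-submodules of $P$. At every regular central character $\chi$, $P/\mathcal J_\chi P$ is the irreducible principal series; its restriction to $\mathcal H_{S_n}$ is the regular representation, and so each isotypic specializes to a one-dimensional line, making it a torsion-free $\mathcal Z_n$-module of generic rank one. A central-character/generic-fibre argument, together with the structure of $\mathcal Z_n$-submodules of the free rank-one $\mathcal A_n$-module $P\cong\mathcal A_n$, should then show that at least one of $e_{\mathrm{sgn}}P$ or $e_{\mathbf 1}P$ contains an element corresponding to a unit in $\mathcal A_n$. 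Any such element then serves as the desired $v$, on which every $T_j$ automatically acts by the matching scalar $-1$ or $q$, completing the argument.
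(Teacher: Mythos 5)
Your overall skeleton is the same as the paper's: reduce the lemma to producing an $\mathcal A_n$-generator $v$ of $P$ that is a simultaneous $T_j$-eigenvector, note that the eigenvalues are then forced into $\{q,-1\}$ and, by the braid relations, are all equal, and conclude that $1\otimes 1\mapsto v$ gives an isomorphism from $\mathcal H_n\otimes_{\mathcal H_{S_n}}\mathrm{sgn}$ or $\mathcal H_n\otimes_{\mathcal H_{S_n}}\mathbf 1$. Your equation $(\star)$ is exactly the paper's functional equation for the eigenvalue function, and those parts of your argument are fine.

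The gap is the construction of $v$, which is where all the work lies and where you offer only a plan. Your claim that ``at least one of $e_{\mathrm{sgn}}P$ or $e_{\mathbf 1}P$ contains an element corresponding to a unit in $\mathcal A_n$'' is essentially a restatement of the lemma (given such a unit, the lemma follows in two lines), and the properties you propose to use --- torsion-freeness and generic rank one over $\mathcal Z_n$ --- cannot yield it: already for $n=2$ the $\mathcal Z_2$-submodule $\mathcal Z_2\cdot(qx_1-x_2)\subset\mathcal A_2$ is torsion-free of generic rank one and contains no unit, and indeed when $P\cong\mathcal H_2\otimes_{\mathcal H_{S_2}}\mathrm{sgn}$ the trivial-isotypic piece is exactly of this form, so only \emph{one} of your two candidate submodules contains a unit. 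Any correct argument must therefore go beyond generic-fibre dimensions and actually pin down how the ``wrong'' isotypic piece sits inside $\mathcal A_n$; your sketch gives no mechanism for this, and the same difficulty reappears in your earlier, undischarged step of finding a single unit $f$ making every $g_j$ simultaneously $s_j$-invariant. The paper resolves precisely this point by explicit computation: it treats the rank-two case $A[x_1^{\pm1},x_2^{\pm1}]$ over a coefficient algebra $A$, solves the functional equation to show the eigenvalue function has the special form $f_m^{\lambda}(x_2/x_1)$, deduces that for any unit $g$ the element $g\,x_2^{m}$ is an actual $T_1$-eigenvector, and then iterates over $j=1,\dots,n-1$ (with $A$ the Laurent ring in the remaining variables) to obtain an invertible monomial that is a joint eigenvector. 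Until you supply an argument of comparable substance for your isotypic-submodule claim, the proof is incomplete at its central step.
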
 

A proof of this lemma is in the next section. 
 Lemma, combined with the third assumption on $\Pi$, implies that $\Pi$ is isomorphic to $\mathcal H_n \otimes_{\mathcal H_{S_n}} \mathrm{sgn}$. 
This completes the proof of Theorem \ref{T:hecke}. 

\begin{remark}
The authors would like to thank a referee for pointing out that the structure of finitely generated projective modules can be 
understood from $K$-theory of affine Hecke algebras \cite[Section 5.1]{So}. 
Some explicit $K$-theoretic computations can be found in \cite[Chapter 6]{So0}. 
\end{remark}

\subsection{$\mathcal H_n$-module structure on $\mathcal A_n$}  The main goal of this section is to prove Lemma \ref{L:structure}. This will be accomplished 
by an explicit calculation for $\calH_2$, from which we shall derive the general case. 
We work in a more general setting, and replace $\mathcal A_2$ with $A[x_1^{\pm1}, x_2^{\pm 1}]$ where $A$ is a $\mathbb C$-algebra. 
So assume we have an $\mathcal H_2$-structure on $A[x_1^{\pm1}, x_2^{\pm 1}]$. In particular, if $g(x_1,x_2) \in A[x_1^{\pm1}, x_2^{\pm 1}]$ is invertible, then 
\[ 
T_1( g(x_1,x_2)) = f(x_1, x_2 )g(x_1,x_2) 
\] 
for some $f(x_1,x_2) \in A[x_1^{\pm1}, x_2^{\pm 1}]$, depending on $g(x_1,x_2)$.  
Using the relation (\ref{E:hecke}), the relation $T_1^2 = (q-1)T_1 +q$ implies that $f(x_1,x_2)$ satisfies the following polynomial equation: 
\[ 
f(x_1,x_2) f(x_2,x_1) = (q-1)(  \frac{ x_1f(x_2,x_1) -x_2 f(x_1,x_2)}{x_1-x_2} )  +q. 
\] 
So our task is to solve this polynomial equation. To that end, we abbreviate 
\[ 
\tilde f(x_1,x_2) =   \frac{ x_1f(x_2,x_1) - x_2f(x_1,x_2)}{x_1-x_2}, 
\] 
and derive some explicit formulae for $\tilde f$. Assume that $f(x_1,x_2)= x_1^n x_2^m$.  If $m\geq n$ then 
\[ 
\tilde f(x_1,x_2) = x_1^mx_2 ^n + x_1^{m-1} x_2^{n+1} + \ldots +  x_1^n x_2^m. 
\] 
If $m<n$ then 
\[ 
\tilde f(x_1,x_2) = -x_1^{n-1}x_2 ^{m+1}  - \ldots - x_1^{m+1} x_2^{n-1}.   
\] 
Write $f(x_1,x_2)= \sum a_{n,m} x_1^n x_2^m$ and define 
\[  \mathrm{maxdeg}(f(x_1,x_2))=\max \left\{ n+m\in \mathbb{Z}: a_{n,m} \neq 0 \right\},
\]
\[  \mathrm{mindeg}(f(x_1, x_2)) = \min \left\{ n+m \in \mathbb{Z}: a_{n,m} \neq 0 \right\} .
\]
\begin{lemma} Assume that $f(x_1,x_2)\in A[x_1^{\pm1}, x_2^{\pm 1}]$ is a solution of the equation 
\[ 
f(x_2,x_1)f(x_1,x_2) = (q-1)\tilde f(x_1,x_2) + q. 
\] 
Then $\mathrm{maxdeg}(f(x_1,x_2))=\mathrm{mindeg}(f(x_1,x_2))=0$. 
\end{lemma}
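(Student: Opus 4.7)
The plan is to compare the extreme total degrees of both sides of
\[ f(x_2,x_1) f(x_1,x_2) = (q-1)\tilde f(x_1,x_2) + q. \]
The key observation is that the explicit formulas displayed just above the lemma for $\tilde f$ on a monomial $x_1^n x_2^m$ show that $\tilde f(x_1^n x_2^m)$ is either zero or a sum of monomials all of total degree $n+m$. Since $f \mapsto \tilde f$ is $A$-linear, decomposing $f = \sum_d f_d$ into its total-degree homogeneous components yields $\tilde f = \sum_d \widetilde{f_d}$ with each $\widetilde{f_d}$ either zero or homogeneous of the same degree $d$. Setting $N = \mathrm{maxdeg}(f)$ and $M = \mathrm{mindeg}(f)$, this immediately gives
\[ \mathrm{maxdeg}\bigl((q-1)\tilde f + q\bigr) \leq \max(N,0), \qquad \mathrm{mindeg}\bigl((q-1)\tilde f + q\bigr) \geq \min(M,0). \]

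Next I would analyze the left-hand side. The degree-$2N$ homogeneous component of $f(x_1,x_2) f(x_2,x_1)$ equals $f_N(x_1,x_2) f_N(x_2,x_1)$, and I want to verify it is nonzero. Viewing these factors as Laurent polynomials in $x_1$ with coefficients in $A[x_2^{\pm 1}]$, let $i_{\max}$ and $i_{\min}$ denote the largest and smallest powers of $x_1$ appearing in $f_N(x_1,x_2)$; then the highest power of $x_1$ in $f_N(x_1,x_2) f_N(x_2,x_1)$ is $x_1^{i_{\max}+N-i_{\min}}$ with coefficient a scalar multiple of $x_2^{N-i_{\max}+i_{\min}}$, the scalar being the product of two nonzero elements of $A$. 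Provided $A$ is an integral domain — which holds in the intended application $A = \mathbb{C}$ — this product is nonzero, so $\mathrm{maxdeg}(\text{LHS}) = 2N$, and a symmetric argument gives $\mathrm{mindeg}(\text{LHS}) = 2M$.

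Combining the two estimates, $2N \leq \max(N,0)$ forces $N \leq 0$, while $2M \geq \min(M,0)$ forces $M \geq 0$; since $M \leq N$ by definition, we conclude $M = N = 0$. The only real obstacle I foresee is the non-cancellation step for the extreme homogeneous components of $f(x_1,x_2)f(x_2,x_1)$, which is routine when $A$ is a domain but would demand a more delicate treatment if one wished to handle arbitrary $\mathbb{C}$-algebras $A$ with zero divisors.
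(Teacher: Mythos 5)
Your argument is correct and is essentially the paper's own proof: both compare extreme total degrees, using that $\tilde f$ preserves (or kills) total degree so the right-hand side has maxdeg at most $\max(N,0)$ and mindeg at least $\min(M,0)$, while the product on the left has maxdeg $2N$ and mindeg $2M$, forcing $N\leq 0\leq M$. The non-cancellation caveat you flag is genuinely needed and is only implicit in the paper's one-line degree comparison; it is harmless because in the intended application $A$ is the Laurent polynomial ring $\mathbb{C}[x_i^{\pm 1}\colon i\neq 1,2]$ (a domain), not just $A=\mathbb{C}$ as you wrote.
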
 
\begin{proof} Let $f(x_1,x_2)\in A[x_1^{\pm1}, x_2^{\pm 1}]$.  If $\mathrm{maxdeg}(f(x_1,x_2))=\mathrm{mindeg}(f(x_1,x_2))=0$ fails then 
$\mathrm{maxdeg}(f(x_1,x_2))>0$ or $\mathrm{mindeg}(f(x_1,x_2))<0$. Assume $\mathrm{maxdeg}(f(x_1,x_2))>0$. Then 
\[ 
\mathrm{maxdeg}(f(x_1,x_2)f(x_2,x_1)) > \mathrm{maxdeg}(f(x_1,x_2))\geq\mathrm{maxdeg}((q-1)\tilde f(x_1,x_2)+q))
\] 
so $f(x_1,x_2)$ is not a solution. The case $\mathrm{mindeg}(f(x_1,x_2))<0$ is dealt with similarly. 
\end{proof} 

\vskip 5pt 
Lemma implies that a solution of the polynomial equation is a Laurent polynomial $f(x)$ where $x=x_2/x_1$.  We abbreviate 
\[ 
\tilde f(x) =  \frac{x^{-1/2} f(x^{-1}) - x^{1/2}f(x)}{x^{-1/2}-x^{1/2}}. 
\] 
\begin{lemma} Let $f(x)\in A[x^{\pm 1}]$ be a solution of 
\[ 
f(x)f(x^{-1})= (q-1)( \tilde f(x)) + q. 
\] 
Then there exists an integer $m$ and $\lambda =-1$ or $q$ such that $f(x) = f^{\lambda}_m(x)$ where, if $m\geq 0$, 
\[ 
f_m^{\lambda}= (q-1)(1 + x + \ldots  + x^{m-1}) + \lambda x^m
\] 
and, if $m<0$, 
\[ 
f_m^{\lambda}= -(q-1)(x^{-1} + \ldots + x^{m+1}) -\lambda x^m. 
\] 
\end{lemma}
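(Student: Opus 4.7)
\emph{Plan.} Write $f(x)=\sum_{n=a}^{b}c_nx^n$ with $c_a,c_b\neq 0$; the zero polynomial is excluded, since then the equation reads $0=q$. The idea is to read off $f$ coefficient by coefficient by matching monomials on the two sides of the equation, starting with the top/bottom ones. The key preparatory fact, read off from the explicit formulas preceding the lemma, is that $\tilde{x^n}$ has top degree $n$ with leading coefficient $+1$ for $n\geq 0$, and top degree $|n|-1$ with leading coefficient $-1$ for $n\leq -1$. By linearity, the top of $\tilde f$ is $b$ with coefficient $c_b$ if $b\geq 0$, and $-a-1$ with coefficient $-c_a$ if $b\leq -1$; meanwhile $f(x)f(x^{-1})$ has top degree $b-a$ with coefficient $c_ac_b$.

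\emph{Step 1: mixed support is impossible.} If $a<0<b$, then $b\geq 1$ and $|a|\geq 1$ imply $b-a=b+|a|>\max(b,|a|-1)$, so the right-hand side has no $x^{b-a}$ term; matching coefficients forces $c_ac_b=0$, a contradiction. Thus the support of $f$ lies entirely in $\Zset_{\geq 0}$ or entirely in $\Zset_{\leq 0}$.

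\emph{Step 2: the nonnegative case $0\leq a\leq b$.} Top-degree matching forces $a=0$, and the leading equation $c_0c_b=(q-1)c_b$ gives $c_0=q-1$. I then prove inductively that $c_k=q-1$ for $k=0,\ldots,b-1$: matching the coefficient of $x^{b-k}$ reads
\[
\sum_{i=0}^{k}c_{b-k+i}c_i=(q-1)\sum_{n=b-k}^{b}c_n,
\]
which, after substituting the inductive hypothesis $c_0=\cdots=c_{k-1}=q-1$, collapses to $c_bc_k=(q-1)c_b$, whence $c_k=q-1$. Finally, matching the constant terms yields $c_b^2-(q-1)c_b-q=0$, whose roots are $c_b\in\{q,-1\}$; this is precisely $f=f_b^\lambda$ with $m=b\geq 0$.

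\emph{Step 3: the nonpositive case $a\leq b\leq 0$.} The same degree comparison (with the sign flip from the $n\leq -1$ formula for $\tilde{x^n}$) forces $b=-1$ and $c_{-1}=-(q-1)$, and a parallel induction yields $c_{-1-k}=-(q-1)$ for $k=0,\ldots,|a|-2$. The constant-term equation now reads $c_a^2+(q-1)c_a-q=0$ (discriminant $(q+1)^2$), so $c_a\in\{1,-q\}$; this is $f=f_a^\lambda$ with $m=a\leq -1$. The argument is entirely elementary; the only real obstacle is the bookkeeping of the induction, which closes thanks to the decisive algebraic observation that at each step both sides of the equation reduce to a single scalar multiple of the nonzero extremal coefficient $c_b$ (respectively $c_a$).
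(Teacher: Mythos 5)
Your proof is correct and takes essentially the same route as the paper's: rule out simultaneous positive and negative powers by a top-degree comparison, then determine the coefficients one at a time by matching coefficients of the functional equation, ending with the quadratic $(c+1)(c-q)=0$ for the extremal coefficient. The differences are only cosmetic --- you spell out the ``no mixed powers'' step that the paper dismisses as trivial and organize the coefficient recursion as an explicit induction, while sharing the paper's mild genericity conventions (cancelling the nonzero extremal coefficient in $A$, and treating the constant case $m=0$ via the constant-term equation, where the $+q$ already enters the ``leading'' relation).
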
 
\begin{proof} 
It is trivial to check that a solution $f(x)$ cannot have at the same time negative and positive powers of $x$. So assume firstly that $f(x) =a_mx^m + a_{m-1}x^{m-1} + \ldots + a_0$, 
where $m\geq 0$ and $a_m\neq 0$. Since 
\[ 
\tilde f(x)= a_m x^m + (a_m + a_{m-1}) x^{m-1}  + \ldots + (a_m+ a_{m-1} + \ldots + a_0) + \ldots  + a_m x^{-m}, 
\] 
equating coefficients  of the two sides yields the following sequence of equations: 
\[ a_m a_0= (q-1)a_m \] 
\[ a_ma_1 + a_{m-1} a_0= (q-1) (a_m + a_{m-1})\] 
etc and the last 
\[ 
a_m^2 + a_{m-1}^2 + \ldots + a_0^2 = (q-1)(a_m+ a_{m-1} + \ldots + a_0) +q.
\] 
Since $a_m\neq 0$ the first equation implies $a_0=q-1$. Then the second implies that $a_1=q-1$ etc. 
Finally, the last implies that $a_m^2 =(q-1)a_m +q$, and this has two solutions, $-1$ and $q$. 
Now assume that $f(x)=-a_0 -a_{-1} x^{-1} - \ldots  -a_{m}x^{m}$, for $m<0$  and $a_m\neq 0$. Then  
\[ 
\tilde f(x)=a_{m} x^{m+1} + (a_m+ a_{m+1})x^{m+2} + \ldots + (a_m+ a_{m+1} + \ldots  + a_{-1}) + \ldots  + a_m x^{1-m}. 
\] 
In particular, we do not have the $x^m$ term. A comparison with the left hand side implies that $a_0=0$. The rest of the proof proceeds along the same lines as in the first case, 
giving $a_{-1}=(q-1)$, $a_{-2}=(q-1)$ ... and $a_m$ a solution of  $a_m^2 =(q-1)a_m +q$. 
\end{proof} 

\begin{corollary}  \label{C:action} 
Assume we have an $\mathcal H_2$-module structure on $A[x_1^{\pm 1}, x_2^{\pm1}]$. 
Then for every invertible $g(x_1,x_2)\in  A[x_1^{\pm 1}, x_1^{\pm1}]$  there exists an integer $m$ such that 
$g(x_1,x_2)x_2^{m}$ is an eigenvector of $T_1$. 
\end{corollary}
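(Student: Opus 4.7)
The plan is to construct the eigenvector explicitly using the integer $m$ already produced by the previous two lemmas. Since $g$ is invertible, those lemmas give $T_1(g) = f_m^\lambda(x) \cdot g$ for a unique integer $m$ and some $\lambda \in \{-1, q\}$, with $x = x_2/x_1$. I will show that this same $m$ works: namely, $g \cdot x_2^m$ is a $T_1$-eigenvector, with eigenvalue $\lambda$ when $m \geq 0$ and eigenvalue $q-1-\lambda$ when $m < 0$.

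The computation proceeds by pushing $x_2^m$ through $T_1$. Taking $f = x_2^m$ in the commutation relation (\ref{E:hecke}) (so $f^{s_1} = x_1^m$) yields the algebra identity
\[
T_1 \cdot x_2^m \;=\; x_1^m \cdot T_1 \;+\; (q-1)\, x_1 \cdot \frac{x_2^m - x_1^m}{x_1 - x_2}
\]
inside $\mathcal H_2$. Applying both sides to $g$ and substituting the known action of $T_1$ on $g$ gives
\[
T_1(g \cdot x_2^m) \;=\; \left( x_1^m f_m^\lambda(x) \;+\; (q-1)\, x_1 \cdot \frac{x_2^m - x_1^m}{x_1 - x_2} \right) g.
\]
After dividing the coefficient by $x_1^m$ and using $x_1 - x_2 = x_1(1-x)$, the claim reduces to the scalar Laurent-polynomial identity
\[
f_m^\lambda(x) \;-\; (q-1) \cdot \frac{x^m - 1}{x - 1} \;=\; \mu \cdot x^m
\]
for some $\mu \in \{-1, q\}$.

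The final step is to verify this identity using the explicit formulas for $f_m^\lambda$ from the preceding lemma. For $m \geq 0$, the fraction $(x^m-1)/(x-1)$ equals $1 + x + \cdots + x^{m-1}$, and subtracting against $f_m^\lambda(x) = (q-1)(1 + x + \cdots + x^{m-1}) + \lambda x^m$ leaves exactly $\lambda x^m$, so $\mu = \lambda$. For $m < 0$, the fraction expands as $-(x^{-1} + x^{-2} + \cdots + x^m)$, and combining with the formula for $f_m^\lambda$ leaves $(q-1-\lambda) x^m$, so $\mu = q-1-\lambda$. Both $\lambda$ and $q-1-\lambda$ lie in $\{-1, q\}$, confirming that $g \cdot x_2^m$ is a $T_1$-eigenvector whose eigenvalue is automatically one of the two roots of $(T_1 + 1)(T_1 - q) = 0$.

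The main obstacle is essentially cosmetic: one must handle the two piecewise definitions of $f_m^\lambda$ separately, but each case is a short geometric-series manipulation. The key conceptual point is the choice of shift — the exponent $m$ in $x_2^m$ must be taken to be precisely the integer $m$ parametrizing the Laurent polynomial $f_m^\lambda$ that describes $T_1(g)$.
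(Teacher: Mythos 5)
Your proof is correct and follows exactly the paper's route: the paper's proof simply says that the two lemmas give $T_1(g)=f_m^{\lambda}(x_2/x_1)\,g$ and that ``one checks'' $g\,x_2^m$ is an eigenvector, and your commutation-relation computation reducing this to the identity $f_m^{\lambda}(x)-(q-1)\frac{x^m-1}{x-1}=\mu x^m$ with $\mu\in\{-1,q\}$ is precisely the omitted verification, carried out correctly in both cases $m\geq 0$ and $m<0$.
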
 
\begin{proof} Two lemmas imply that $T_1( g(x_1,x_2))= f^{\lambda}_{m}(x_2/x_1) g(x_1,x_2)$. 
Now one checks that $g(x_1,x_2)x_2^{m}$ is an eigenvector. 
\end{proof} 

In view of the tensor product decomposition $\calH_n \cong \calA_n\otimes_{\mathbb C} \calH_{S_n}$, the following corollary completes the proof of Lemma \ref{L:structure}: 

\begin{corollary}   Assume we have an $\mathcal H_n$-module structure on $\mathcal A_n=\mathbb C[x_1^{\pm 1}, \ldots, x_n^{\pm1}]$. 
Then there exists an invertible element in $\mathcal A_n$ that is an eigenvector for $T_1, \ldots  ,T_{n-1}$. 
\end{corollary}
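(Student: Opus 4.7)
The plan is to iterate Corollary \ref{C:action} to build up simultaneous eigenvectors one variable at a time. For each $i \in \{1, \dots, n-1\}$, set $A_i = \mathbb{C}[x_j^{\pm 1} : j \neq i, i+1]$. Since every element of $A_i$ is fixed by the transposition $s_i$, relation (\ref{E:hecke}) shows that $A_i$ commutes with $T_i$; hence the subalgebra of $\mathcal H_n$ generated by $T_i$ and $\mathcal A_n$ is an $\mathcal H_2$ over $A_i$ in the sense of the previous subsection, and $\mathcal A_n \cong A_i[x_i^{\pm 1}, x_{i+1}^{\pm 1}]$ becomes an $\mathcal H_2$-module to which Corollary \ref{C:action} applies.

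With this setup in place, I would inductively construct invertible elements $g_0, g_1, \dots, g_{n-1} \in \mathcal A_n$ with $g_i$ a common eigenvector of $T_1, \dots, T_i$. Take $g_0 = 1$. Given $g_i$ invertible and satisfying this property, apply Corollary \ref{C:action} in the $\mathcal H_2$-structure attached to $T_{i+1}$ (coefficient ring $A_{i+1}$, variables $x_{i+1}, x_{i+2}$) to the invertible element $g_i$: this produces $m \in \mathbb{Z}$ such that $g_{i+1} := g_i \cdot x_{i+2}^{m}$ is a $T_{i+1}$-eigenvector. The element $g_{i+1}$ remains invertible in $\mathcal A_n$, since $x_{i+2}$ is a unit.

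It then remains to check that $g_{i+1}$ is still a $T_j$-eigenvector for every $j \leq i$. Since $j+1 \leq i+1 < i+2$, the variable $x_{i+2}$ is fixed by $s_j$, so relation (\ref{E:hecke}) gives $T_j \cdot x_{i+2}^m = x_{i+2}^m \cdot T_j$ in $\mathcal H_n$. Since the $\mathcal A_n$-module action on $\mathcal A_n$ is just multiplication, this yields
\[
T_j(g_{i+1}) \;=\; T_j(x_{i+2}^m\, g_i) \;=\; x_{i+2}^m\, T_j(g_i) \;=\; \lambda_j\, g_{i+1},
\]
where $\lambda_j$ is the $T_j$-eigenvalue of $g_i$. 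After $n-1$ iterations we obtain the desired invertible simultaneous eigenvector $g_{n-1}$. No step presents a substantial obstacle — the entire argument consists in cleanly reducing, at each stage, to the two-variable setting already handled by Corollary \ref{C:action}, and then using that each new factor $x_{i+2}^m$ is $s_j$-invariant for all previously treated $j$.
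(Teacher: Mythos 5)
Your proposal is correct and follows essentially the same route as the paper: iterate Corollary \ref{C:action} with coefficient ring $\mathbb C[x_j^{\pm 1}: j\neq i+1, i+2]$, multiplying the current eigenvector by a suitable power of the new variable, and use that this new variable commutes with all previously handled $T_j$ to preserve the earlier eigenvector properties. Your write-up merely makes explicit the $\mathcal H_2$-over-$A$ module structure and the commutation check that the paper leaves implicit.
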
 
\begin{proof} We apply Corollary \ref{C:action} to $A[x_1^{\pm 1}, x_2^{\pm1}]$ and $g(x_1,x_2)=1$, where $A=\mathbb C[x_i^{\pm 1}]$ for $i\neq 1,2$.  Thus there is an integer $m_2$ 
such that $x_2^{m_2}$ is an eigenvector of $T_1$. Next, we apply Corollary \ref{C:action} to  $A[x_2^{\pm 1}, x_3^{\pm1}]$ and $g(x_1,x_2)=x_2^{m_2}$, 
where $A=\mathbb C[x_i^{\pm 1}]$ for $i\neq 2,3$. 
Hence there exists an integer $m_3$ such that $x_2^{m_2} x_3^{m_3}$ is an eigenvector of $T_2$. Since $T_1$ and $x_3$ commute, $x_2^{m_2} x_3^{m_3}$ is still an eigenvector 
of $T_1$. Continuing in this fashion, we arrive to a monomial in $\mathbb C[x_1^{\pm 1}, \ldots, x_n^{\pm1}]$ that is a joint eigenvector for all $T_j$. 
\end{proof} 

\section{Gelfand-Graev representation} 

\subsection{Hecke algebras} Let $G$ be a $p$-adic reductive group. 
Let $K$ be an open compact subgroup of $G$ and $(\rho, E)$ a smooth, finite-dimensional, representation of $K$. Let 
$\mathcal H(G,\rho)$ be the algebra of compactly supported $\End(E^{\vee})$-valued functions on $G$ such that $f(kgk')= \rho^{\vee}(k) f(g) \rho^{\vee}(k')$ for $k,k'\in K$. 
Let $\calS(G)$ be the space of locally constant, compactly supported functions on $G$, and let $e_{\rho}\in \calS(G)$ be defined by 
\[ 
e_{\rho}(x)=\frac{\dim(\rho)}{\vol(K)} \tr_E(x^{-1}) 
\] 
if $x\in K$ and $0$ otherwise. Then $e_{\rho} \ast e_{\rho}= e_{\rho}$. Let 
$\calH_{\rho}=e_{\rho} \ast \calS(G) \ast e_{\rho}$. The two algebras are related by a canonical  isomorphism $\calH_{\rho} \cong \mathcal H(G,\rho)\otimes \End(E)$, see \cite{BK2}. 
 If $(\pi, V)$ is a smooth representation of $G$,  let 
\[ 
V_{\rho} =\Hom_K(E, V) \cong (E^{\vee} \otimes V)^K. 
\] 
Note that $f\in \mathcal H(G,\rho)$  naturally acts on $e^{\vee}\otimes v\in E^{\vee} \otimes V$ by the formula 
\[ 
\pi_{\rho}(f) (e^{\vee}\otimes v)= \int_G f(g)(e^{\vee}) \otimes \pi(g)(v) ~dg. 
\] 
This action preserves the subspace $(E^{\vee} \otimes V)^K$, and defines a structure of $\calH(G,\rho)$-module on $V_{\rho}$. 
On the other hand, 
\[ 
\pi(e_{\rho}) \cdot V\cong V_{\rho} \otimes E
\] 
 is naturally a $\calH_{\rho}$-module. The two structures are compatible with respect to the isomorphism $\calH_{\rho}\cong \mathcal H(G,\rho)\otimes \End(E)$.
 
 \begin{lemma} \label{L:finite} 
  Assume that a smooth representation $(\pi, V)$  of $G$ is finitely generated. Then $V_{\rho}$ is finitely generated $\calH(G, \rho)$-module. 
 \end{lemma}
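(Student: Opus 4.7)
The plan is to reduce the statement to a model situation via Frobenius reciprocity and the exactness of the functor $V \mapsto V_\rho$. Identifying $V_\rho \cong \Hom_G(\ind_K^G E, V)$, one sees that $(-)_\rho = (E^\vee \otimes -)^K$ is a composition of two exact functors (tensoring with the finite-dimensional $E^\vee$, and taking invariants under the compact open subgroup $K$), so the representation $\ind_K^G E$ is projective in the category of smooth $G$-representations. This lets me transfer a surjection on the $G$-side to a surjection on the $\calH(G,\rho)$-side.

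Concretely, I pick generators $v_1, \ldots, v_n$ of $V$ together with a compact open subgroup $K_0 \subset K$ that is normal in $K$, contained in $\Ker \rho$, and fixes each $v_i$; such a $K_0$ exists since $V$ is smooth and $\rho$ is smooth on the profinite group $K$. Mapping the standard generator of $\ind_{K_0}^G \mathbf{1}$ in the $i$-th summand to $v_i$ produces a surjection of smooth $G$-representations
\[
\bigoplus_{i=1}^{n} \ind_{K_0}^{G} \mathbf{1} \twoheadrightarrow V,
\]
to which I apply the exact functor $(-)_\rho$ to obtain a surjection of $\calH(G,\rho)$-modules $\bigoplus_i (\ind_{K_0}^{G}\mathbf{1})_\rho \twoheadrightarrow V_\rho$. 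It will then suffice to show that $(\ind_{K_0}^{G}\mathbf{1})_\rho$ is finitely generated over $\calH(G,\rho)$, or equivalently (using $\calH_\rho \cong \calH(G,\rho) \otimes \End(E)$) that $e_\rho \ast \calS(G) \ast e_{K_0}$ is finitely generated as a left $\calH_\rho$-module.

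The concrete claim I would establish at this last stage is that the finite-dimensional subspace $e_\rho \ast \calS(K) \ast e_{K_0}$---the $\rho$-isotypic component of $\mathbb{C}[K/K_0]$---generates $e_\rho \ast \calS(G) \ast e_{K_0}$ as a left $\calH_\rho$-module. The main obstacle is verifying this decomposition: an arbitrary $f \in e_\rho \ast \calS(G) \ast e_{K_0}$ is supported on finitely many $(K,K)$-double cosets, and on each such double coset $KgK$ its restriction should be realizable as a convolution $h \ast \phi$ with $h \in \calH_\rho$ supported on $KgK$ and $\phi \in e_\rho \ast \calS(K) \ast e_{K_0}$ encoding the finer $(K,K_0)$-structure. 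The key inputs will be the identity $e_\rho \ast e_{K_0} = e_\rho = e_{K_0} \ast e_\rho$ (which follows from $K_0 \subset \Ker \rho$) and the finiteness of $[K:K_0]$, which bounds the number of $(K,K_0)$-double cosets inside each $(K,K)$-double coset.
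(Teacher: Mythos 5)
Your first three steps are sound, and in fact they run parallel to the paper's own reduction: exactness of $(-)_\rho$, the surjection $\bigoplus_i \ind_{K_0}^G\mathbf 1 \twoheadrightarrow V$, and the translation to showing that $e_\rho\ast\calS(G)\ast e_{K_0}$ is a finitely generated left $\calH_\rho$-module (note that, since $K_0\subseteq\Ker\rho$ is normal in $K$, this space equals $e_\rho\ast\calS(K_0\backslash G/K_0)$). The gap is the final claim: the finite-dimensional space $e_\rho\ast\calS(K)\ast e_{K_0}$ does \emph{not} generate $e_\rho\ast\calS(G)\ast e_{K_0}$ over $\calH_\rho$, and the coset-by-coset factorization $f|_{KgK}=h\ast\phi$ cannot be achieved. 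Test it on the simplest instance of the lemma, $\rho=\mathbf 1$ the trivial character of $K$ (which is literally the Iwahori case of the intended application): then $e_\rho=e_K$, one computes $e_K\ast\calS(K)\ast e_{K_0}=\mathbb C\,\mathbf 1_K$, and
\[
\calH_\rho\ast\bigl(e_K\ast\calS(K)\ast e_{K_0}\bigr)=\calS(K\backslash G/K)\ast\mathbf 1_K=\calS(K\backslash G/K),
\]
which is a proper subspace of $e_K\ast\calS(G)\ast e_{K_0}=\calS(K\backslash G/K_0)$: for any $g$ with $KgK_0\subsetneq KgK$ (such $g$ exist in abundance) the function $\mathbf 1_{KgK_0}$ lies in the latter but not the former. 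The same count explains why the double-coset heuristic fails: on a fixed $KgK$, the products $h\ast\phi$ with $h\in\calH_\rho$ supported on $KgK$ and $\phi$ supported on $K$ span a much smaller space than the space of left $e_\rho$-equivariant, right $K_0$-invariant functions on $KgK$; left convolution by $\calH_\rho$ cannot refine the right $K_0$-structure beyond what is already present in the generators.

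What is missing is a genuinely non-formal finiteness input, and this is exactly what the paper's proof uses: by Bernstein, the algebra $\calS(K_0\backslash G/K_0)$ is a finitely generated module over its center $Z_{K_0}$, which is Noetherian. Since $e_\rho\ast z\ast e_\rho=e_\rho\ast z$ for $z\in Z_{K_0}$, one has $e_\rho\ast Z_{K_0}\subseteq\calH_\rho$, and then $e_\rho\ast\calS(G)\ast e_{K_0}=e_\rho\ast\calS(K_0\backslash G/K_0)$ is finite over $e_\rho\ast Z_{K_0}$, hence over $\calH_\rho$. With this substituted for your last claim, your argument closes and becomes essentially the paper's proof (the paper writes $V=\pi(\calS(G/J))\cdot V_0$ with $J\subseteq\Ker\rho$ fixing the generators, i.e.\ your $K_0$, and invokes the same finiteness over the center); without some such input, no purely formal manipulation of $(K,K_0)$-cosets will yield finite generation.
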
 
 \begin{proof} It suffices to show that $\pi(e_{\rho}) \cdot V$ is finitely generated $\calH_{\rho}$-module. Let $V_0$ be a finite-dimensional subspace generating $V$. Let 
 $J$ be an open compact subgroup of $K$ such that $V_0\subseteq V^J$. Then $V= \pi(\calS(G/J))\cdot V_0$. Assume, in addition, that $J$  is contained in 
 the kernel of $\rho$, so $e_{\rho}\in  \calS(J\backslash G /J)$. Then 
 \[ 
 \pi(e_{\rho} )\cdot V= \pi(e_{\rho}\ast \calS(J\backslash G /J))\cdot V_0. 
 \] 
 It is known that $\calS(J\backslash G /J)$ is finite over its center $Z_J$. Hence $\pi(e_{\rho}) \cdot V$ is finite over $e_{\rho} \ast Z_J\subseteq \calH_{\rho}$. 
 
 \end{proof} 
 
  Let $f\in \calH(G,\rho)$. Then $f(g)\in \End(E)$. Let $\bar{f}(g)$ be the image of $f(g)$ under the composite of the following isomorphisms: 
 \[
 \End(E)\cong E\otimes E^{\vee} \cong \End(E^{\vee}).
 \] 
 Let $f^*(g)=\bar f(g^{-1})$. Then the map $f\mapsto f^*$ is anti-isomorphism of $ \calH(G,\rho)$ and $\calH(G,\rho^{\vee})$. 
 Let $(\pi^{\vee}, V^{\vee})$ be the smooth dual of $(\pi, V)$. Then $V^{\vee}_{\rho^{\vee}}$ is an $\calH(G,\rho^{\vee})$-module. 
 We have a natural isomorphism 
 \[ 
  (V_{\rho})^*= ((E^{\vee} \otimes V)^K)^*\cong (E \otimes V^{\vee})^K= V^{\vee}_{\rho^{\vee}}
 \] 
  of  vector spaces where  $(V_{\rho})^*$ is the linear dual of $V_{\rho}$.  On $(V_{\rho})^*$ we have an anti-action $\pi^*_{\rho}$ of $\calH(G,\rho)$. Via the isomorphism 
  $(V_{\rho})^*\cong V^{\vee}_{\rho^{\vee}}$ the two actions are related by the formula 
  \[ 
  \pi^*_{\rho}(f)= \pi_{\rho^{\vee}}(f^*).
  \] 

 \subsection{Bernstein's decomposition} \label{ss types}

Let $\mathfrak{R}(G)$ be the category of smooth representations of $G$. We recall some notions and properties of Bernstein decomposition, and the Bushnell-Kutzko theory of types \cite{BK, BK2}, mainly for the case of general linear groups. 

Let $\mathfrak{B}(G)$ be the set of $G$-inertial equivalence classes. For each $\mathfrak{s} \in \mathfrak{B}(G)$, let $\mathfrak{R}^{\mathfrak{s}}(G)$ be the Bernstein component associated to $\mathfrak{s}$. More precisely, an inertial equivalence class $\mathfrak{s}$ consists of pairs $(L, \tau)$, where $L$ is a Levi subgroup of $G$ and $\tau$ is a supercuspidal representation, and $\mathfrak{R}^{\mathfrak{s}}(G)$ is the full subcategory of $\mathfrak{R}(G)$ whose objects have the property that every irreducible subquotient appears as a composition factor of $\mathrm{Ind}_P^{G}(\tau \otimes \chi)$ for some unramified character $\chi$ of  $L$ and  $P$ is a parabolic subgroup with the Levi part $L$.
Two pairs $(L_1, \tau_1)$ and $(L_2,\tau_2)$ are in the same equivalence class if and only if they determine the same subcategories in $\mathfrak{R}(G)$. 
The Bernstein decomposition asserts that there is an equivalence of categories:
\[  \mathfrak{R}(G) \cong \prod_{\mathfrak{s}\in \mathfrak{B}(G)} \mathfrak{R}^{\mathfrak{s}}(G) .
\]

\begin{definition} Fix an inertial equivalence class $\mathfrak s$.  Let $K$ be an open compact subgroup of $G$. 
 Let $\rho$ be a smooth finite-dimensional representation of $K$. Then $\rho$ is called  an $\mathfrak s$-type if $V \mapsto V_{\rho}$ is an equivalence of the category 
$ \mathfrak{R}^{\mathfrak{s}}(G)$  and the category of $\calH(G,\rho)$-modules. 
\end{definition} 

We now look at the special case when $G=GL_{nr}(F)$  and an inertial equivalence class $\mathfrak s_n$ is given by 
\begin{equation}  \label{eqn prod levi}
   L= GL_r(F) \times \ldots \times GL_r(F)
\end{equation}
and
\begin{equation} \label{eqn prod sc}
 \tau = \delta \boxtimes \ldots \boxtimes \delta,
\end{equation}
where $\delta$ is a supercuspidal representation of $GL_r(F)$ and the number of factors is $n$. Let $P$ the parabolic subgroup of $GL_{nr}(F)$, with the Levi $L$,  
consisting of block upper-triangular matrices. 
Let $\mathrm{St}_n(\delta)$ be the unique irreducible quotient of 
\[ \mathrm{Ind}_{P}^{G} (\nu^{\frac{1-n}{2}} \delta \boxtimes \nu^{\frac{3-n}{2}} \delta \boxtimes \ldots \boxtimes \nu^{\frac{n-1}{2}}\delta )
\]
as in \cite[Sec. 9.1]{BZ}. Then $\mathrm{St}_n(\delta)$ is an essentially square integrable representation,  also known as the generalized Steinberg representation. 
We have the following result due to Bushnell and Kutzko (and Waldspurger \cite{Wa} in the tame case):

\begin{theorem} \label{thm theory of type} Let $\mathfrak{s}_n$ be the inertial class of $G=GL_{nr}(F)$ as above. Then there exists an $\mathfrak{s}_n$-type $\rho_n$ and 
an isomorphism $\calH(G,\rho_n)\cong \calH_n$, where $\calH_n$ is defined in Section \ref{S:hecke} with $q$ equal to a power of the order of the residual field of $F$. Moreover, under the isomorphism $\calH(G,\rho_n)\cong \calH_n$, the generalized Steinberg representation $\St_n(\delta)_{\rho_n}$ corresponds to the Steinberg module of $\calH_n$. 
\end{theorem}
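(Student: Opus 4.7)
The first two assertions---the existence of the $\mathfrak{s}_n$-type $\rho_n$ and the isomorphism $\calH(G,\rho_n) \cong \mathcal H_n$---are the content of the Bushnell-Kutzko theory of simple types and their covers, supplemented by Waldspurger's work in the tame case. My plan is to quote these results rather than reconstruct them. In outline, one starts with a maximal simple type $\rho$ on $GL_r(F)$ attached to the supercuspidal $\delta$ (whose Hecke algebra is a Laurent polynomial ring in one variable), forms the exterior tensor product $\rho_L = \rho^{\boxtimes n}$ on the block-diagonal Levi $L$ to obtain an $[L,\tau]$-type with Hecke algebra $\calA_n$, and then invokes the cover construction of \cite{BK} to produce $\rho_n$ on $G$ with $\calH(G,\rho_n) \cong \mathcal H_n$.

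For the identification of $\St_n(\delta)_{\rho_n}$ with the Steinberg module, the key structural fact supplied by the cover theory is that the functor $V \mapsto V_{\rho_n}$ intertwines normalized parabolic induction from $L$ with the Hecke algebra induction $\mathcal H_n \otimes_{\mathcal A_n} (-)$, where the isomorphism $\calH(L,\rho_L) \cong \mathcal A_n$ is normalized so that twisting the $i$-th factor by an unramified character $\nu^a$ corresponds to the character $x_i \mapsto q^a$ of $\calA_n$. Under this correspondence, the induced representation $\mathrm{Ind}_P^G(\nu^{(1-n)/2}\delta \boxtimes \cdots \boxtimes \nu^{(n-1)/2}\delta)$ translates into the standard module $\mathcal H_n \otimes_{\mathcal A_n} \chi_0$, where $\chi_0$ is the character of $\calA_n$ sending $x_i \mapsto q^{(2i-n-1)/2}$.

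To finish, I would invoke the standard fact from the representation theory of $\mathcal H_n$ that this strictly decreasing exponent sequence makes $\mathcal H_n \otimes_{\mathcal A_n} \chi_0$ into a principal series module possessing a unique irreducible quotient, namely the Steinberg module on which $\mathcal H_{S_n}$ acts through the sign character. Combining this with the uniqueness of $\St_n(\delta)$ as the irreducible quotient of the corresponding induced representation, and transporting across the categorical equivalence furnished by the type $\rho_n$, yields the desired identification.

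The main obstacle is purely a matter of bookkeeping: one must verify that the Bushnell-Kutzko isomorphism $\calH(G,\rho_n) \cong \mathcal H_n$ really does intertwine the unramified twist on the Levi side with the indicated characters of $\calA_n$ on the Hecke algebra side. This normalization issue requires one to unpack the construction of the cover and the support calculation identifying the generators $T_j \in \mathcal H_n$ with the corresponding Hecke operators in $\calH(G,\rho_n)$. It is standard but technical, and is treated in \cite{BK} and \cite{Wa}.
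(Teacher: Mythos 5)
Your proposal matches the paper's treatment: the paper gives no independent proof of this statement, simply quoting it as a result of Bushnell--Kutzko \cite{BK} (and Waldspurger \cite{Wa} in the tame case), exactly the sources you invoke. Your additional sketch of how the Steinberg identification follows from the cover theory's compatibility with parabolic induction and the uniqueness of the irreducible quotient of the corresponding principal series module is a reasonable unpacking of that citation, with the normalization bookkeeping correctly flagged as the only technical point.
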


Let $U$ be the unipotent group of upper-triangular matrices in $G$. Let $\psi: U \rightarrow \mathbb C^{\times}$ be a Whittaker functional. 
The Gelfand-Graev representation is the induced representation $\ind_U^G(\psi)$, consisting of functions on $G$ with compact support modulo $U$. 

\begin{theorem} \label{thm GG}  Let $G=GL_{nr}(F)$ and let $\rho_n$ be the $\mathfrak{s}_n$-type as in Theorem \ref{thm theory of type}. Then 
\[ 
(\ind_U^G(\psi))_{\rho_n} \cong \mathcal H_n \otimes_{\calH_{S_n}} \mathrm{sgn} 
\] 
as $\calH(G,\rho_n) \cong \calH_n$-modules. 
\end{theorem}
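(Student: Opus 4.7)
The plan is to identify $\Pi := (\ind_U^G \psi)_{\rho_n}$ with $\calH_n \otimes_{\calH_{S_n}} \sgn$ by verifying the three hypotheses of Theorem \ref{T:hecke} for $\Pi$. The Bushnell--Kutzko equivalence $V \mapsto V_{\rho_n}$ between $\mathfrak{R}^{\mathfrak{s}_n}(G)$ and the category of $\calH_n$-modules will be used throughout to transfer structural properties between the two sides.

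For the first hypothesis, projectivity and finite generation of $\Pi$ follow from projectivity of the Gelfand--Graev representation (Prasad \cite{Pr}, with a general-setting proof supplied in the appendix) combined with finite generation of its $\mathfrak{s}_n$-component (Bushnell--Henniart \cite{BH}), both of which transfer via the type equivalence (Lemma \ref{L:finite} handling the latter point). For the second hypothesis, Frobenius reciprocity gives
\[
\Hom_G(\ind_U^G \psi,\,V) \cong \Hom_U(\psi,\,V|_U)
\]
for every irreducible smooth $V \in \mathfrak{R}^{\mathfrak{s}_n}(G)$, and uniqueness of Whittaker models for $GL_{nr}(F)$ (Gelfand--Kazhdan, Shalika) bounds the right-hand side by $1$; hence $\dim \Hom_{\calH_n}(\Pi,\pi) \le 1$ for every irreducible $\calH_n$-module, in particular for irreducible principal series. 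For the third hypothesis, the generalized Steinberg $\St_n(\delta)$ lies in $\mathfrak{R}^{\mathfrak{s}_n}(G)$ and is generic, so there is a (nonzero, hence surjective) map $\ind_U^G \psi \twoheadrightarrow \St_n(\delta)$; applying $V \mapsto V_{\rho_n}$ yields a surjection $\Pi \twoheadrightarrow \St_n(\delta)_{\rho_n}$, and by Theorem \ref{thm theory of type} the target is the Steinberg module of $\calH_n$, which restricts to $\sgn$ on $\calH_{S_n}$ --- a twisted Steinberg representation in the sense of Section \ref{S:hecke}. Theorem \ref{T:hecke} then concludes.

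The principal obstacle is the first step, namely the projectivity of $\ind_U^G \psi$: Prasad's argument is tailored to general linear groups, so one must either invoke it directly or rely on the more general appendix proof. Granting projectivity, the remaining ingredients --- uniqueness of Whittaker models, genericity of $\St_n(\delta)$, and the Bushnell--Kutzko type equivalence --- are all classical, so the argument reduces to a clean three-step verification of the hypotheses of Theorem \ref{T:hecke}. A minor subtlety is that ``principal series'' in Theorem \ref{T:hecke} refers to irreducible $\calH_n$-modules of dimension $|S_n|$; but since the Whittaker multiplicity bound above holds for \emph{every} irreducible in $\mathfrak{R}^{\mathfrak{s}_n}(G)$, no fine matching with the $G$-side is required.
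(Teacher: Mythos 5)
Your proposal is correct and follows essentially the same route as the paper: verify the three hypotheses of Theorem \ref{T:hecke} via Bushnell--Henniart finite generation together with Lemma \ref{L:finite}, projectivity of the Gelfand--Graev representation (Corollary \ref{C:projective}), uniqueness of Whittaker models for the multiplicity bound, and genericity of $\St_n(\delta)$ whose image under the type equivalence is the Steinberg module of $\calH_n$, hence a twisted Steinberg. One small repair: since $U$ is closed but not open, compact induction is not left adjoint to restriction, so the displayed isomorphism $\Hom_G(\ind_U^G\psi, V)\cong \Hom_U(\psi, V|_U)$ is not the correct form of Frobenius reciprocity; instead dualize, $\Hom_G(\ind_U^G\psi, V)\cong \Hom_G(V^\vee, \Ind_U^G\bar\psi)\cong \Hom_U(V^\vee|_U,\bar\psi)$ for irreducible (hence admissible) $V$, and apply Gelfand--Kazhdan/Shalika uniqueness, which yields exactly the multiplicity-one statement the paper invokes.
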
 
\begin{proof} We need to show that the conditions of Theorem \ref{T:hecke} are satisfied. By a result of Bushnell and Henniart \cite{BH}, every Bernstein component of the 
Gelfand-Graev representation is finitely generated. Thus $(\ind_U^G(\psi))_{\rho_n}$ is finitely generated $\calH(G,\rho_n)$-module by Lemma \ref{L:finite}. Moreover, 
the Gelfand-Graev representation is projective by Corollary \ref{C:projective}. Thus the first bullet in Theorem \ref{T:hecke} holds. The second bullet 
holds since any Whittaker generic representation appears as a quotient, with multiplicity one, of the Gelfand-Graev representation. Finally,  $\St_n(\delta)_{\rho_n}$  is an 
essentially discrete series representation and therefore Whittaker generic. Hence the third bullet holds. 
\end{proof} 

In addition to the isomorphism $\epsilon: \calH_n \rightarrow \calH(G,\rho_n)$, there is also an isomorphism $\epsilon^{\vee}: \calH_n \rightarrow \calH(G,\rho^{\vee}_n)$.  Since 
$(\epsilon(T_j))^*$ is supported on the same double coset as $\epsilon^{\vee}(T_j)$, and satisfies the same quadratic equation, the two elements must be the same. 
Hence the following diagram commutes, here the left vertical arrow is the anti-involution of $\calH_{S_n}$ defined by $T_j^*=T_j$ for all $j=1, \ldots, n-1$. 
\[ 
\xymatrix{
\calH_{S_n}   \ar[r]^{\epsilon} \ar[d]^{*} & \mathcal H(G, \rho_n) \ar[d]^{*} \\%
\calH_{S_n}	 \ar[r]^{\epsilon^{\vee}} & \mathcal H(G, \rho^{\vee}_n) 
},
\]

If $(\pi, V)$ is a smooth representation of $G$, let $V_{U,\psi}$ be the maximal quotient of $V$ such that $U$ acts on it by $\psi$. Recall that 
\[ 
\mathbf S_n= (\sum_{w\in S_n} (1/q)^{l(w)} )^{-1} \sum_{w\in S_n} (-1/q)^{l(w)} T_w, 
\] 
 where $l$ is the length function on $\mathbf S_n$, is the sign projector.

\begin{theorem} \label{thm functorial}  Let $G=GL_{nr}(F)$ and let $\rho_n$ be the $\mathfrak{s}$-type as in Theorem \ref{thm theory of type}. Let $(\pi,V)$ be 
an admissible  representation of $G$ in the component $\mathfrak{R}^{\mathfrak{s}}(G)$. Then there exists a functorial isomorphism of vector spaces 
$ \phi_V: S_n(V_{\rho_n}) \rightarrow V_{U,\bar\psi}$.   
\end{theorem}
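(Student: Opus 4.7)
The strategy is to combine Theorem \ref{thm GG} with the duality between $V_{\rho_n}$ and $(V^\vee)_{\rho_n^\vee}$. First, Theorem \ref{thm GG} and the adjunction $\Hom_{\mathcal H_n}(\mathcal H_n \otimes_{\mathcal H_{S_n}} \mathrm{sgn}, M) \cong \mathbf{S}_n M$, together with the Bushnell-Kutzko equivalence of categories, give for $V \in \mathfrak R^{\mathfrak s_n}(G)$ a functorial identification
\[
\mathbf{S}_n V_{\rho_n} \cong \Hom_{\mathcal H_n}\bigl((\ind_U^G\psi)_{\rho_n},\, V_{\rho_n}\bigr) \cong \Hom_G(\ind_U^G\psi,\, V),
\]
using that only the $\mathfrak s_n$-component of $\ind_U^G\psi$ maps to $V$. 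Frobenius reciprocity for compact induction identifies the right side with $V^{U,\psi}$, which is not manifestly $V_{U,\bar\psi}$, so I pass to the contragredient.

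Apply the same argument to $V^\vee \in \mathfrak R^{\mathfrak s_n^\vee}(G)$ with $\rho_n^\vee$ in place of $\rho_n$. The proof of Theorem \ref{thm GG} carries over verbatim, since $\mathrm{St}_n(\delta^\vee)$ is likewise Whittaker-generic and the other hypotheses of Theorem \ref{T:hecke} hold; thus $(\ind_U^G\psi)_{\rho_n^\vee} \cong \mathcal H_n \otimes_{\mathcal H_{S_n}}\mathrm{sgn}$, and
\[
\mathbf{S}_n (V^\vee)_{\rho_n^\vee} \cong \Hom_G(\ind_U^G\psi,\, V^\vee) \cong (V^\vee)^{U,\psi}.
\]
Unwinding the $\pi^\vee$-action shows $(V^\vee)^{U,\psi}$ consists of smooth $\bar\psi$-Whittaker functionals on $V$, i.e., the smooth dual of $V_{U,\bar\psi}$. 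Since $V$ is admissible over $GL_{nr}(F)$, the top Bernstein-Zelevinsky derivative $V_{U,\bar\psi}$ is finite-dimensional, so this smooth dual coincides with $(V_{U,\bar\psi})^*$.

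Finally, glue the two computations via Hecke-module duality. The commuting diagram of anti-involutions recorded just before the statement of Theorem \ref{thm functorial} produces a functorial iso $(V^\vee)_{\rho_n^\vee} \cong (V_{\rho_n})^*$ of vector spaces, intertwining the $\mathcal H_n$-actions up to the anti-involution $T_w \mapsto T_{w^{-1}}$. Because $\mathbf{S}_n$ is fixed by this anti-involution (as $l(w) = l(w^{-1})$ and its coefficients depend only on $l(w)$), it follows that $\mathbf{S}_n (V^\vee)_{\rho_n^\vee} \cong (\mathbf{S}_n V_{\rho_n})^*$. Chaining the isomorphisms yields
\[
(\mathbf{S}_n V_{\rho_n})^* \cong (V_{U,\bar\psi})^*,
\]
and dualising these finite-dimensional spaces gives the desired functorial $\phi_V$. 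The main obstacle is the bookkeeping through the dualities: one must verify carefully that the contragredient on the $G$-side corresponds to the $*$-anti-involution on the Hecke side, and that the Whittaker character on the Jacquet module emerges as $\bar\psi$ rather than $\psi$.
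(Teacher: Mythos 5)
Your overall architecture (pass to $V^\vee$, use Theorem \ref{thm GG} for the dual type $\rho_n^\vee$, identify $(V^\vee)_{\rho_n^\vee}\cong (V_{\rho_n})^*$ via the $*$-compatibility diagram, use that $\mathbf S_n$ is fixed by the anti-involution, and dualize finite-dimensional spaces) is exactly the paper's route. But there is one genuinely broken step: the appeal to ``Frobenius reciprocity for compact induction.'' Since $U$ is closed but not open, $\ind_U^G$ is \emph{not} left adjoint to restriction, so $\Hom_G(\ind_U^G\psi, W)\not\cong W^{U,\psi}$ in general. Concretely, for $W$ an irreducible generic representation the left side is one-dimensional while $W^{U,\psi}=0$ (an infinite-dimensional irreducible smooth representation has no nonzero $(U,\psi)$-eigenvectors). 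The same error propagates to the next sentence: $(V^\vee)^{U,\psi}$ is the space of $\bar\psi$-Whittaker functionals on $V$ that are \emph{smooth vectors of} $V^\vee$, and this is typically zero; it is not the dual of $V_{U,\bar\psi}$, because $V_{U,\bar\psi}$ is a quotient of $V$ and its linear dual sits inside $V^*$ but not inside the smooth dual $V^\vee$. Taken literally, your chain would identify $\mathbf S_n(V^\vee)_{\rho_n^\vee}$ with a space that vanishes, e.g.\ for $V=\St_n(\delta)$, which is absurd.

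The repair is the adjunction the paper actually uses: $(\ind_U^G\psi)^\vee\cong \Ind_U^G\bar\psi$ together with $\Hom_G(A,B^\vee)\cong\Hom_G(B,A^\vee)$ and genuine Frobenius reciprocity for the \emph{full} induction, giving
\begin{equation*}
\Hom_G(\ind_U^G\psi, V^\vee)\;\cong\;\Hom_G(V,\Ind_U^G\bar\psi)\;\cong\;\Hom_U(V|_U,\bar\psi)\;=\;(V_{U,\bar\psi})^*,
\end{equation*}
with no intermediate eigenspace $(V^\vee)^{U,\psi}$ at all. With this substitution your argument closes correctly and coincides with the paper's proof of Lemma \ref{lem GG} (the paper keeps an auxiliary finite-dimensional multiplicity space $X$ and invokes Yoneda, which for admissible $V$ amounts to the same finite-dimensional dualization you perform at the end). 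Your remaining steps --- genericity of $\St_n(\delta^\vee)$ so that Theorem \ref{T:hecke} applies to $(\ind_U^G\psi)_{\rho_n^\vee}$, the identification $(V^\vee)_{\rho_n^\vee}\cong(V_{\rho_n})^*$ intertwining the $\mathcal H_{S_n}$-actions through $T_w\mapsto T_{w^{-1}}$, the $*$-invariance of $\mathbf S_n$, and finite-dimensionality of $V_{U,\bar\psi}$ and $\mathbf S_n(V_{\rho_n})$ for admissible $V$ --- are all correct and match the paper.
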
 
\begin{proof} We need the following: 

\begin{lemma} \label{lem GG} For every smooth representation $V$ in the component $\mathfrak{R}^{\mathfrak{s}}(G)$ 
 and every finite dimensional complex vector space $X$, there is an isomorphism,  functorial in $V$ and $X$, 
\[ 
\Phi_X:  \Hom_{\mathbb C}(V_{U,\bar\psi}, X) \rightarrow \Hom_{\mathbb C}(\mathbf S_n(V_{\rho_n}), X).  
\] 

\end{lemma} 
\begin{proof}  We start by observing some facts that will be needed in the proof. 
Let $Y$ and $Z$ be two complex vector spaces, and $Y^*$ and $Z^*$ their linear duals.  Then 
\[ 
\Hom_{\mathbb C}(Y, Z^*) \cong \Hom_{\mathbb C}(Z, Y^*).
\] 
If $Y$ and $Z$ are $\mathcal H_{S_n}$-modules, then $Y^*$ and $Z^*$ are $\mathcal H_{S_n}$-modules, where the action is the natural anti-action, precomposed 
with the anti-involution $T_j^*=T_j$ for all $j=1, \ldots, n-1$. Then 
\[ 
\Hom_{\calH_{S_n}}(Y, Z^*) \cong \Hom_{\calH_{S_n}}(Z, Y^*).
\] 
If $Y$ and $Z$ are smooth representations of $G$, let $Y^{\vee}$ and $Z^{\vee}$ be the smooth duals of $Y$ and $Z$. Then 
\[ 
\Hom_{G}(Y,  Z^{\vee}) \cong \Hom_{G}(Z, Y^{\vee}).
\] 
For every finite dimensional vector space $X$ we have the following sequence of isomorphisms: 
\begin{align*}
\Hom_{\mathbb C} (V_{U,\bar \psi},X)&\cong  \mathrm{Hom}_{G}(V, \mathrm{Ind}_{U}^{G} (X\boxtimes \bar \psi))  \quad \mbox{ (by Frobenius reciprocity) } \\
             & \cong \mathrm{Hom}_{G}(\mathrm{ind}_{U}^{G} (X^*\boxtimes \psi), V^{\vee}) \quad 
             \mbox{ (since $(\mathrm{ind}_{U}^{G} (X^*\boxtimes \psi)^{\vee}\cong \mathrm{Ind}_{U}^{G} (X\boxtimes \bar \psi)$)  } \\
                &  \cong \mathrm{Hom}_{\mathcal H_n}(\mathcal H_n \otimes_{\mathcal H_{S_n}} (X^*\boxtimes \mathrm{sgn}),  V^{\vee}_{\rho_n^{\vee}}) \quad 
                \mbox{ (by Theorem  \ref{thm GG} for $\rho_n^{\vee}$)}  \\
                &  \cong \mathrm{Hom}_{\mathcal H_{S_n}} (X^*\boxtimes \mathrm{sgn},  V^{\vee}_{\rho^{\vee}_n})  \quad \mbox{ (by Frobenius reciprocity) } \\
	&  \cong \mathrm{Hom}_{\mathcal H_{S_n}}(V_{\rho_n}, X\boxtimes \mathrm{sgn}) \mbox{ (since $V^{\vee}_{\rho^{\vee}_n}\cong (V_{\rho_n})^*$) } \\
								& \cong \Hom_{\mathbb C}(\mathbf S_n(V_{\rho_n}), X).\\
\end{align*}
The map $\Phi_X$ is the composite of the sequence of isomorphisms.  
\end{proof} 

Now assume that $V$ is admissible. Then $V_{U,\bar \psi}$ and $\mathbf S_n(V_{\rho_n})$ are finite-dimensional. By the Yoneda Lemma, Lemma \ref{lem GG} implies Theorem \ref{thm functorial}.

\end{proof}

Let $I$ be an Iwahori subgroup of $G$.  In the case when $(\pi,V)$ belongs to the 
Bernstein component of representations generated by their $I$-fixed vectors Theorem \ref{thm functorial} holds for all smooth representations, that is, 
without the admissibility assumption. This is Corollary 4.5 in \cite{CS} which is proved using an explicit version of Theorem \ref{thm GG} available in the Iwahori case. 
In this case $\mathbf{S}_n(V_{\rho_n})$ is simply $\mathbf{S}_n(V^{I})$. 
 The inclusion of  $\mathbf{S}_n(V^I)$ into $V$ followed with the projection on $V_{U,\bar\psi}$ gives the map $\phi_V$.

\section{Bernstein-Zelevinsky derivatives} 
In this section we shall change notation slightly and write $G_n=GL_{nr}(F)$.  
 We shall also use $\pi$ to denote the space of a smooth representation of $G_n$. 
As previously, $\rho_n$ is an $\mathfrak{s}_n$-type. 
 
\subsection{Jacquet functor} \label{s:jacquet} 
Let $P=MN$ be the minimal parabolic subgroup of $G_n$ of block-upper triangular matrices, with the Levi $M=G_{n-i} \times G_{i}$, and the unipotent radical $N$.
The restriction of the $K$-type $\rho_n$ to $K_M=K\cap M$ is irreducible and isomorphic to $\rho_{n-i}\boxtimes\rho_i$. 
 We have the following commutative diagram, a consequence of Theorem (7.6.20) in \cite{BK}:
\[ 
\xymatrix{
\calH_{n-i}\otimes \calH_i   \ar[r]^{\cong} \ar[d]^{m} & \mathcal H(M, \rho_{n-i}\boxtimes\rho_i) \ar[d]  \\%
\calH_n	 \ar[r]^{\cong} & \mathcal H(G, \rho_n) }
\]
where the vertical maps are injections. The left vertical map $m$ is explicitly described as follows: 
$m(T_j\otimes 1)  \mapsto T_{j}$ and  $m(x_j \otimes 1) \mapsto x_j$, for  $j=1, \ldots n-i-1$; 
 $m(1\otimes T_j) \mapsto T_{j+n-i}$ and  $m(1\otimes  x_j) \mapsto x_{j+n-i}$, for  $j=1, \ldots i-1$.
 
 Let $\pi$ be a smooth representation of $G_n$. Then $\pi_{\rho_n}$ is an $\calH(M, \rho_{n-i}\boxtimes\rho_i)$-module by restriction from $\calH(G_n,\rho_n)$. Let $\pi_N$ be the normalized Jacquet functor i.e the maximal quotient of $\pi$ such that $N$ acts trivially. Then we have a natural map
  $\pi_{\rho_n} \rightarrow (\pi_N)_{\rho_{n-i}\boxtimes \rho_i}$.

\begin{proposition} \label{prop jacquet} (\cite {BK2} Corollary 7.11)   
 As $\mathcal H(M, \rho_{n-i} \boxtimes \rho_i )$-modules, 
$\pi_{\rho_n}\cong (\pi_N)_{\rho_{n-i}\boxtimes \rho_i}$. 
\end{proposition}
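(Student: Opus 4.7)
The plan is to invoke the Bushnell--Kutzko theory of $G$-covers, which is designed precisely to handle this sort of Jacquet-module compatibility. The first step is to verify that $(K, \rho_n)$ is a cover of $(K_M, \rho_{n-i}\boxtimes \rho_i)$ relative to the parabolic $P = MN$. The relevant decomposition properties ($K = (K\cap N^-)(K\cap M)(K\cap N)$ with $\rho_n$ trivial on $K\cap N^{\pm}$) are built into the construction of the simple type $\rho_n$ in \cite{BK}, so this verification reduces to citing the structural results there; the existence of the algebra embedding $\mathcal H(M, \rho_{n-i}\boxtimes \rho_i) \hookrightarrow \mathcal H(G_n, \rho_n)$ exhibited in the commutative diagram above already encodes the key compatibility.

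Next I would construct the natural candidate map. The short exact sequence $0 \to \pi(N) \to \pi \to \pi_N \to 0$, where $\pi(N)$ is the kernel of the Jacquet projection, remains exact after applying the exact functor $V \mapsto V_{\rho_n}$ at the level of $K$-invariants (after twisting by $\rho_n^\vee$). Using that $\rho_n$ restricts trivially on $K\cap N$, I would identify $(\pi(N))_{\rho_n}$ with the subspace of $\pi_{\rho_n}$ on which a strongly $(P,K)$-positive element $z$ of $\mathcal H(M, \rho_{n-i}\boxtimes \rho_i)$ acts nilpotently, and identify the quotient with a localization of $\pi_{\rho_n}$ at $z$ that maps functorially to $(\pi_N)_{\rho_{n-i}\boxtimes \rho_i}$.

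The crux is then to show that this localization map is an isomorphism. This is where the cover axiom pays off: because $z$ is strongly positive, its image in $\mathcal H(G_n, \rho_n)$ is supported deep inside $P$, and one proves that $z$ already acts invertibly on $\pi_{\rho_n}$ itself once one passes to the Jacquet side. In other words, multiplication by a suitable power of $z$ identifies $\pi_{\rho_n}$ with its image in $(\pi_N)_{\rho_{n-i}\boxtimes\rho_i}$, and the cover axioms guarantee that this image is the whole space.

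The main obstacle is this last surjectivity/invertibility statement, which is exactly the content of \cite[Cor.~7.11]{BK2}; once the cover axioms are in place it is formal, but verifying those axioms for the specific simple types $\rho_n$ of $GL_{nr}(F)$ is the technical heart of \cite{BK}. Since we have already invoked \cite{BK, Wa} in Theorem \ref{thm theory of type} to obtain $\rho_n$ as a Bushnell--Kutzko type, we inherit the cover structure and may quote the proposition directly.
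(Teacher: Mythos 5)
Your proposal is correct and matches the paper's treatment: the paper gives no independent argument, but simply cites \cite[Cor.~7.11]{BK2}, relying (as you do) on the structural results of \cite{BK} (Theorem (7.6.20), the cover property of the simple type $\rho_n$) that underlie the embedding $\mathcal H(M,\rho_{n-i}\boxtimes\rho_i)\hookrightarrow\mathcal H(G_n,\rho_n)$. Your sketch of the positive-element/localization mechanism is just an expansion of what happens inside the cited result, so the approaches coincide.
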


\subsection{Bernstein-Zelevinsky derivatives} \label{ss ZD}
Let $U_{i}$ be the subgroup of $M$ consisting of matrices of the form
\[ \begin{pmatrix}  I_{r(n-i)} & 0 \\ 0 & u \end{pmatrix} ,\]
where $u$ is a strictly upper-triangular matrix in $G_{i}$.  The character $\overline\psi$ of conductor $\mathfrak p$ defines a Whittaker character $\psi$ of $U_{i}$ 
\[ 
\psi (u) =\sum_{j=r(n-i)+1}^{rn-1}  \overline\psi(u_{j,j+1}) 
\] 
where $u_{j,j+1}$ refers to the matrix entries. Let $\omega$ be a smooth $M$-module. Let $\omega_{U_{i}, \psi}$ be
 the space of  $\psi$-twisted $U_{i}$-coinvariants. It is naturally a $G_{n-i}$-module. 
The $ri$-th Bernstein-Zelevinsky derivative of a smooth $G_{n}$-module $\pi$ is defined by 
\begin{align} \label{eqn BZ derivative}
 \pi^{(ri)} = (\pi_N) _{U_{i}, \psi}
\end{align}
Thus the $ri$-th Bernstein-Zelevinsky derivative is a functor from the category of smooth $G_{n}$-modules to the category of smooth $G_{n-i}$-modules. 
We note that  th $l$-th derivative $\pi^{(l)}$ is defined for any non-negative integer $l$, however, if $\pi$ is an object in $\mathfrak{R}^{\mathfrak{s}}(G_{n})$ 
 then $\pi^{(l)}=0$ unless $l$ is divisible by $r$. 

\subsection{Bernstein-Zelevinsky derivative for $\mathcal H_n$} \label{ss bz der hn}

  Abusing notation, we shall identify $\mathcal H_{n-i}$ and $m(\mathcal H_{n-i}\otimes 1)$.  Let $\mathbf S_i\in {\mathcal H}_{i}$ be the sign projector. 
 Let $\mathbf S_i^n=m(1\otimes  {\mathbf S_i})$.  
  Let $\sigma$ be an  $\mathcal H_{n}$-module.  The $i$-th Bernstein-Zelevinsky derivative of $\sigma$ is the natural $\mathcal H_{n-i}$-module 
\[ 
\mathbf{BZ}_i(\sigma): =\mathbf S_i^n(\sigma). 
\]

\begin{theorem} \label{thm bz aha}
Let $\pi$ be an admissible representation of $G_n$ in $\mathfrak{R}^{\mathfrak{s_n}}(G_n)$.  There is a functorial isomorphism 
$\mathbf{BZ}_i(\pi_{\rho_n})\cong (\pi^{(ri)})_{\rho_{n-i}}$ of $\mathcal H_{n-i}$-modules.  

\end{theorem}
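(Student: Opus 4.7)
The plan is to reduce Theorem \ref{thm bz aha} to Theorem \ref{thm functorial} applied inside the smaller group $G_i$. By Proposition \ref{prop jacquet}, $\pi_{\rho_n} \cong (\pi_N)_{\rho_{n-i} \boxtimes \rho_i}$ as modules over $\mathcal{H}(M, \rho_{n-i}\boxtimes \rho_i) \cong \mathcal{H}_{n-i} \otimes \mathcal{H}_i$, which embeds in $\mathcal{H}_n$ via the map $m$ of Section \ref{s:jacquet}. Since $K_M = K_{G_{n-i}} \times K_{G_i}$ and $\rho_{n-i}\boxtimes \rho_i$ factors accordingly, I would rewrite $(\pi_N)_{\rho_{n-i}\boxtimes \rho_i} \cong ((\pi_N)_{\rho_{n-i}})_{\rho_i}$, where $(\pi_N)_{\rho_{n-i}} = \Hom_{K_{G_{n-i}}}(\rho_{n-i}, \pi_N)$ is now viewed as a $G_i$-module carrying a commuting $\mathcal{H}_{n-i}$-action. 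By the definition $\mathbf{S}_i^n = m(1\otimes \mathbf{S}_i)$, the action of $\mathbf{S}_i^n$ on $\pi_{\rho_n}$ translates, under these identifications, into the action of $\mathbf{S}_i \in \mathcal{H}_i$ on the $\rho_i$-component of the inner type.

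The next step is to apply Theorem \ref{thm functorial}, with $n$ replaced by $i$, to the $G_i$-representation $(\pi_N)_{\rho_{n-i}}$. Two preliminaries are required. First, $(\pi_N)_{\rho_{n-i}}$ is admissible as a $G_i$-module: for any open compact $K_i \subset G_i$, choosing $K' \subset K_{G_{n-i}}$ that acts trivially on $\rho_{n-i}^\vee$, one has $((\pi_N)_{\rho_{n-i}})^{K_i} \hookrightarrow \rho_{n-i}^\vee \otimes \pi_N^{K'\times K_i}$, which is finite-dimensional by admissibility of $\pi_N$ as an $M$-module (a standard consequence of admissibility of $\pi$). Second, $(\pi_N)_{\rho_{n-i}}$ lies in $\mathfrak{R}^{\mathfrak{s}_i}(G_i)$, since $\pi_N$ lies in $\mathfrak{R}^{(\mathfrak{s}_{n-i},\mathfrak{s}_i)}(M)$ by the geometric lemma (or direct inspection of cuspidal supports) applied to $\pi \in \mathfrak{R}^{\mathfrak{s}_n}(G_n)$ with $\tau = \delta^{\boxtimes n}$. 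Theorem \ref{thm functorial} then yields a functorial isomorphism $\mathbf{S}_i(((\pi_N)_{\rho_{n-i}})_{\rho_i}) \cong ((\pi_N)_{\rho_{n-i}})_{U_i, \bar\psi}$ of vector spaces.

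To finish, I would identify the left-hand side with $\mathbf{S}_i^n(\pi_{\rho_n}) = \mathbf{BZ}_i(\pi_{\rho_n})$ via the first paragraph, and the right-hand side with $((\pi_N)_{U_i,\bar\psi})_{\rho_{n-i}} = (\pi^{(ri)})_{\rho_{n-i}}$, using that $U_i \subset G_i$ commutes with $K_{G_{n-i}}$ so that taking the $\rho_{n-i}$-type and taking $(U_i,\bar\psi)$-coinvariants commute. The $\mathcal{H}_{n-i}$-action commutes with every operation performed: on the group side, $G_{n-i}$ commutes with $K_{G_i}$ and $U_i$; on the algebra side, the images of $\mathcal{H}_{n-i}$ and $\mathcal{H}_i$ in $\mathcal{H}_n$ under $m$ commute. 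Consequently, the vector-space isomorphism upgrades to one of $\mathcal{H}_{n-i}$-modules. The main obstacle I anticipate is purely bookkeeping: confirming that each of these type-taking, Jacquet, and twisted-coinvariant operations genuinely commutes past each other and past the $\mathcal{H}_{n-i}$-action, and that the admissibility required by Theorem \ref{thm functorial} is preserved when passing from $\pi$ to the partial type $(\pi_N)_{\rho_{n-i}}$.
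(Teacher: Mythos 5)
Your proposal is correct and follows essentially the same route as the paper: identify $\pi_{\rho_n}$ with $(\pi_N)_{\rho_{n-i}\boxtimes\rho_i}$ via Proposition \ref{prop jacquet}, view $(\pi_N)_{\rho_{n-i}}$ as an admissible $G_i$-representation in $\mathfrak{R}^{\mathfrak{s}_i}(G_i)$, apply Theorem \ref{thm functorial} for $G_i$, and use functoriality together with the commutation of type-taking and $(U_i,\bar\psi)$-coinvariants to upgrade to an $\mathcal H_{n-i}$-module isomorphism. The only difference is that you spell out the admissibility and bookkeeping checks that the paper leaves implicit.
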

\begin{proof} By Proposition \ref{prop jacquet}, $\pi_N$ belongs to the Bernstein component with the type $\rho_{n-i}\boxtimes \rho_i$. It follows that 
$(\pi_N)_{\rho_{n-i}}$ is an admisible representation in $\mathfrak{R}^{\mathfrak{s}_i}(G_i)$. Theorem \ref{thm functorial}, applied to $G_i$, implies that there is an 
isomorphism 
\[ 
\mathbf S_i((\pi_N)_{\rho_{n-i}}) \cong ((\pi_N)_{\rho_{n-i}})_{U_i,\bar\psi_i}.
\] 
Since  the isomorphism is functorial, it is also an isomorphism of $\calH_{n-i}$-modules. Since $((\pi_N)_{\rho_{n-i}})_{U_i,\bar\psi_i}\cong ((\pi_N)_{U_i,\bar\psi_i})_{\rho_{n-i}}\cong (\pi^{(ri)})_{\rho_{n-i}}$, the theorem follows.

\end{proof}

As it is true for Theorem \ref{thm functorial}, in the case of the Bernstein component of representations generated by their Iwahori-fixed vectors, Theorem \ref{thm bz aha} holds without the assumption that $\pi$ is admissible.

 \section{A Leibniz rule} \label{s bz gha}

\subsection{Affine Hecke algebras} \label{def aha}

We shall state the definition of an affine Hecke algebra in a greater generality which will be needed in the following subsections.

Let $(X,R, X^{\vee}, R^{\vee})$ be a root datum where $R$ is a reduced root system and $X$  a $\mathbb{Z}$-lattice containing $R$. 
Let $W$ be the Weyl group of $R$. 
Fix a set of simple roots $\Delta$. The choice of $\Delta$ determines a set $S$ of simple reflections in $W$. Let 
  $l: W \rightarrow \mathbb{Z}$  be the  length function such that $l(s)=1$ for all $s\in S$.  Let $\calA\cong \mathbb C[X]$ be the group algebra of $X$. In other words, 
  $\calA$ has a basis of elements $\theta_x$, $x\in X$, such that $\theta_x \theta_y=\theta_{x+y}$, for all $x,y\in X$. 

\begin{definition} \label{def affine heck alg}
 The affine Hecke algebra $\mathcal H:=\mathcal H(X,  R, \Delta, q)$ associated to the datum is defined to be the complex associative algebra generated by 
 the elements $T_w, w\in W$, and the algebra $\calA$, subject to the relations 
\begin{enumerate}
\item $T_{w}T_{w'}=T_{ww'}$ if $l(ww')=l(w)+l(w')$,
\item $(T_s+1)(T_s-q)=0$ for $s \in S$.
\item $T_s \theta_x - \theta_{s(x)} T_s= (q-1)\frac{\theta_x - \theta_{s(x)}}{1-\theta_{-\alpha^{\vee}}}.$ 
\end{enumerate}
\end{definition}

Denote by $\mathcal H_W$ the finite-dimensional subalgebra of $\mathcal H$ generated by $T_w$ ($w \in W$). 
 We have an isomorphisms of vector spaces $\mathcal H \cong \mathcal A\otimes_{\mathbb C} \mathcal H_W$. 
 Let $\mathbb{T}=\mathrm{Hom}(X, \mathbb{C}^{\times})$. 
The center $\mathcal Z$ of $\mathcal H$ is isomorphic  to $\mathbb C[X]^W$. Hence 
central characters of $\mathcal H$ are parameterized by $W$-orbits in $\mathbb{T}$. We shall denote by $Wt$ the $W$-orbit of $t \in \mathbb{T}$. 
Let $\mathcal J_{Wt}$ be the corresponding maximal ideal in $\mathcal Z$. 
For a finite-dimensional $\mathcal H$-module $\chi$, let $\chi_{[Wt]}$  be the subspace of $\chi$ annihilated by a power of 
 $\mathcal J_{Wt}$. Then 
\[  \chi \cong \bigoplus_{Wt \in \mathbb{T}/W} \chi_{[Wt]} .\]

\smallskip 
Let $X_n=X_n^{\vee} = \bigoplus_{k=1}^n \mathbb{Z}\epsilon_k$ be a $\mathbb{Z}$-lattice. Set $\alpha_{kl}=\epsilon_k-\epsilon_l$ ($k \neq l$) and also set $\alpha_k=\alpha_{k,k+1}$ ($k=1, \ldots, n$). Let $R_n=R_n^{\vee}=\left\{ \epsilon_k-\epsilon_l: l\neq k\right\}$ be a root system of type $A_{n-1}$. 
Let $\Delta_n=\left\{ \epsilon_i-\epsilon_{i+1}: i=1,\ldots, n-1 \right\}$. 
The Iwahori-Hecke algebra $\mathcal H_n$  of $GL(n)$ (from Section \ref{S:hecke}) is isomorphic to $\mathcal{H}(X_n, R_n, \Delta_n, q)$.

\subsection{Lusztig's first reduction theorem} \label{s first l red}

We shall need a variation \cite[Section 2]{OS} of Lusztig's reduction theorem for the affine Hecke algebra $\mathcal H_n$ \cite[Section 8]{Lu}. 
Let $\mathbb{T}_n=\mathrm{Hom}(X_n, \mathbb{C}^{\times})$. Any $t\in \mathbb{T}_n$ is identified with an $n$-tuple $(z_1, \ldots ,z_n)$ of non-zero complex numbers where 
$z_i$ is the value of $t$ at $\epsilon_i$.   Let 
 $\mathbb{T}_r=\mathrm{Hom}(X_n, \mathbb{R}_{> 0})$ and $\mathbb{T}_{un}=\mathrm{Hom}(X_n, S^1)$.   
Any $t \in \mathbb T_n$ has a polar decomposition $t=vu$ where $v \in \mathbb{T}_r$ and $u \in\mathbb{T}_{un}$. 
 Write $x(u)$ for the value of $u$ at $x\in X_n$.  Hence $u=(z_1, \ldots ,z_m)$ where $z_k=\epsilon_k(u)$. 
  We can permute the entries of 
$u$ such that, for a partition $\mathbf n=(n_1, \ldots , n_m)$  of $n$, $z_1=\ldots =z_{n_1} \neq z_{n_1+1} =\ldots $ etc. Let 
\[  
R_{\mathbf n}= \left\{ \alpha \in R_n: \alpha(u)=1  \right\}.  
\] 
It is a root subsystem of $R_n$ which, as the notation indicates, depends on the partition $\mathbf n$. It is isomorphic to the product  $R_{n_1}\times \ldots \times R_{n_m}$. 
Let $S_{\mathbf n}\cong S_{n_1} \times \ldots \times S_{n_m}$ be its Weyl group. 
 Let $\Delta_{\mathbf n}$ be the set of simple roots in $R_{\mathbf n}$ determined by $R^+_{\mathbf n}=R^+_n \cap R_{\mathbf n}$. 
 Let 
 \[ 
 \mathcal H_{\mathbf n}:=\mathcal H(X_n, R_{\mathbf n}, \Delta_{\mathbf n}, q)\cong \mathcal H_{n_1} \otimes \ldots \otimes \mathcal H_{n_m} 
 \] 
  be the associated affine Hecke algebra (Definition \ref{def affine heck alg}). 
  Let $\mathcal Z_{\mathbf n}=\mathcal A_n^{S_{\mathbf n}}$ be the center of $\mathcal H_{\mathbf n}$. Let
  $\mathcal J_{ S_{\mathbf n}t}$ be an ideal in $\mathcal Z_{\mathbf n}$ corresponding to the central character $S_{\mathbf n} t$. 
 Let $\sigma$ be a finite-dimensional ${\mathcal H}_{\mathbf n}$-module annihilated by a power of $\mathcal J_{S_{\mathbf n}t}$. Then 
  $\iota(\sigma)=\mathcal H_n \otimes_{\mathcal H_{\mathbf n}} \sigma$ is annihilated by a power of $\mathcal J_{S_nt}$. 
 
The following result and proof are a variation of \cite[Sections 8.16 and 10.9]{Lu}. 
 \begin{theorem} \label{thm equ cat first red}
 The functor $\iota$ defines an equivalence  between the category of finite-dimensional ${\mathcal H}_{\mathbf n}$-modules annihilated by a power of $\mathcal J_{S_{\mathbf n}t}$ and the category of finite-dimensional $\mathcal H_n$-modules annihilated by a power of $\mathcal J_{S_nt}$. 
 \end{theorem}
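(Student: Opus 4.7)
The plan is to construct an explicit quasi-inverse functor $\kappa$ to $\iota$ via a generalised eigenspace and verify the equivalence using a primary decomposition of the centre.

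Define
\[
\kappa(\pi) \;:=\; \pi_{[S_{\mathbf n}t]},
\]
the generalised eigenspace of $\pi$ for $\mathcal Z_{\mathbf n}$ at the maximal ideal $\mathcal J_{S_{\mathbf n}t}$. Since $\mathcal Z_{\mathbf n}$ lies in the centre of $\mathcal H_{\mathbf n}$, $\kappa(\pi)$ is an $\mathcal H_{\mathbf n}$-submodule, finite-dimensional and annihilated by a power of $\mathcal J_{S_{\mathbf n}t}$. Because any $\mathcal H_{\mathbf n}$-homomorphism out of a module killed by a power of $\mathcal J_{S_{\mathbf n}t}$ automatically lands in the corresponding generalised eigenspace, $\kappa$ is the right adjoint of $\iota$ between the two categories; the task is to show the unit and counit of this adjunction are isomorphisms.

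The structural engine is a primary decomposition. By Steinberg's theorem, $S_{\mathbf n}$---generated by the reflections for the roots vanishing on $u$---is the full stabiliser of $u$ in $S_n$, so $\mathrm{Stab}_{S_n}(t)\subseteq S_{\mathbf n}$. Therefore the orbit $S_n t$ decomposes disjointly as $\bigsqcup_{w\in S^{\mathbf n}} S_{\mathbf n} wt$, where $S^{\mathbf n}$ is a set of minimal-length coset representatives for $S_{\mathbf n}\backslash S_n$. The corresponding maximal ideals $\mathcal J_{S_{\mathbf n}wt}$ of $\mathcal Z_{\mathbf n}$ above $\mathcal J_{S_n t}$ are pairwise coprime, so Chinese Remainder yields for every $\pi$ in the target category the canonical $\mathcal H_{\mathbf n}$-module decomposition
\[
\pi \;=\; \bigoplus_{w\in S^{\mathbf n}} \pi_{[S_{\mathbf n}wt]},
\]
whose $w=e$ summand is $\kappa(\pi)$.

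To check the unit $\sigma \to \kappa(\iota(\sigma))$ is an isomorphism, I use the PBW decomposition $\mathcal H_n = \bigoplus_{w\in S^{\mathbf n}} T_w\mathcal H_{\mathbf n}$ as a free right $\mathcal H_{\mathbf n}$-module of rank $[S_n:S_{\mathbf n}]$, giving $\iota(\sigma) = \bigoplus_w T_w\otimes\sigma$. Iterating Definition~\ref{def affine heck alg}(3), for $z\in\mathcal Z_{\mathbf n}\subseteq\mathcal A_n$ one has $zT_w = T_w\, w^{-1}(z) + \sum_{w'<w} T_{w'}b_{w',w,z}$ with $b_{w',w,z}\in\mathcal A_n$, so the length filtration $F^k = \bigoplus_{l(w)\leq k} T_w\otimes\sigma$ is $\mathcal Z_{\mathbf n}$-stable. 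Further decomposing $\sigma = \bigoplus_{t'\in S_{\mathbf n}t} \sigma_{[t']}$ into generalised $\mathcal A_n$-eigenspaces, the associated-graded block $T_w\otimes\sigma_{[t']}$ carries $\mathcal Z_{\mathbf n}$-generalised eigenvalue $S_{\mathbf n}wt'$; since $wt'\in S_{\mathbf n}t$ forces $w\in S_{\mathbf n}$, hence $w=e$, the $S_{\mathbf n}t$-generalised eigenspace of $\iota(\sigma)$ is precisely the bottom piece $T_e\otimes\sigma \cong \sigma$. For the counit $\iota(\kappa(\pi))\to\pi$, the same computation yields $\dim\iota(\kappa(\pi))_{[S_{\mathbf n}wt]} = \dim\kappa(\pi)$ for every $w$; comparing with the primary decomposition of $\pi$, the counit is an isomorphism once $\dim\pi_{[S_{\mathbf n}wt]} = \dim\kappa(\pi)$, which follows from the invertibility of $T_w\in\mathcal H_n$---made precise by completing at $\mathcal J_{S_n t}$, where the factor $1-\theta_{-\alpha^\vee}$ is invertible at $t$ for $\alpha\notin R_{\mathbf n}$ (the polar decomposition $t=vu$ shows $\alpha(u)\neq 1$ implies $\alpha(t)\neq 1$), producing rescaled intertwiners that implement bimodule isomorphisms between the primary components.

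The main technical obstacle is the lower-length commutator correction $\sum_{w'<w}T_{w'}b_{w',w,z}$ in the commutation of $\mathcal Z_{\mathbf n}$ with $T_w$, which a priori obstructs identifying $T_w\otimes\sigma$ with a single $\mathcal Z_{\mathbf n}$-eigenspace. On $\iota(\sigma)$ the length filtration neutralises it: on the associated graded, $\mathcal Z_{\mathbf n}$ acts diagonally with pairwise-distinct characters drawn from the disjoint orbits $\{S_{\mathbf n}wt\}$, so the generalised-eigenspace decomposition of the filtered module lifts unambiguously from that of its associated graded. The more delicate dimension-matching inside $\pi$---where no such filtration is available---is resolved after completion at $\mathcal J_{S_n t}$, where the rescaled intertwining operators produce the required transport between primary components.
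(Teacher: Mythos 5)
Your overall route (exhibit $\kappa(\pi)=\pi_{[S_{\mathbf n}t]}$ as a quasi-inverse, prove the unit is an isomorphism via the length filtration on $\mathcal H_n\otimes_{\mathcal H_{\mathbf n}}\sigma$, and handle the counit by intertwiners after completing at the central character) is the standard one; the paper itself does not reprove the theorem but delegates it to Lusztig [Lu, 8.16, 10.9] and Opdam--Solleveld [OS, Sec. 2]. Your unit argument is essentially sound. However, there is a concrete error and a genuine gap. The error: the decomposition $S_nt=\bigsqcup_{w\in S^{\mathbf n}}S_{\mathbf n}wt$ is false in general. Disjointness would require $\mathrm{Stab}_{S_n}(t)\subseteq w^{-1}S_{\mathbf n}w$ for all coset representatives $w$, and $\mathrm{Stab}_{S_n}(t)\subseteq S_{\mathbf n}$ does not give this. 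For instance, with $n=4$, $u=(1,1,-1,-1)$, $v=(a,a,b,b)$, $t=(a,a,-b,-b)$, one has $S_{\mathbf n}=\langle s_1,s_3\rangle=\mathrm{Stab}_{S_4}(t)$ and $[S_4:S_{\mathbf n}]=6$, but $S_4t$ has only three $S_{\mathbf n}$-orbits, of sizes $1,4,1$. Hence distinct cosets can give the same maximal ideal $\mathcal J_{S_{\mathbf n}wt}$, the Chinese Remainder indexing must be over the distinct orbits (equivalently over $S_{\mathbf n}\backslash S_n/\mathrm{Stab}_{S_n}(t)$) with multiplicities, and your claims $\dim\iota(\kappa(\pi))_{[S_{\mathbf n}wt]}=\dim\kappa(\pi)$ and $\dim\pi_{[S_{\mathbf n}wt]}=\dim\kappa(\pi)$ fail as stated: in the example, for the principal series $\mathcal H_4\otimes_{\mathcal A_4}\mathbb C_t$ the component attached to the size-$4$ orbit has dimension $4\dim\kappa(\pi)$. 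This bookkeeping is repairable, but as written it is wrong precisely in the cases where $\mathrm{Stab}_{S_n}(t)$ is nontrivial.

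The genuine gap is the counit. Even with the dimension count corrected, equality of dimensions of $\iota(\kappa(\pi))$ and $\pi$ does not show that the specific map $h\otimes m\mapsto hm$ is an isomorphism; you still need surjectivity, i.e. that $\pi$ is generated over $\mathcal H_n$ by $\pi_{[S_{\mathbf n}t]}$, together with the statement that all point eigenspaces $\pi_{[t']}$, $t'\in S_nt$, have equal dimension. Both are exactly what the intertwining elements in the completion are for, and their construction and invertibility is the technical heart of Lusztig's argument, which your proof compresses into the clause ``producing rescaled intertwiners that implement bimodule isomorphisms.'' To close this you must (i) define the intertwiners on the completion and show they are invertible on the relevant generalized eigenspaces, which needs $\alpha(t')\neq q^{\pm1}$ in addition to $\alpha(t')\neq 1$ (both do follow from the polar decomposition and $q>0$, but you only verify the latter), and (ii) check that in a reduced factorization of $\tau_w$, for $w$ a minimal-length coset representative, only roots outside $R_{\mathbf n}$ (transported appropriately along the word) occur, so that the invertibility criterion applies at each step. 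Without these two points the counit step, and hence the equivalence, is not established.
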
 

\subsection{First reduction for the Bernstein-Zelevinsky derivatives} \label{s translate BZ}

We keep using notations from the previous subsection. In particular, we fixed $t=vu \in \mathbb{T}_n$, and 
 we have a canonical isomorphism $\mathcal H_{\mathbf n} \cong \mathcal H_{n_1} \otimes \ldots \otimes \mathcal H_{n_m}$, 
 where $\mathbf n=(n_1, \ldots, n_m)$ is a partition of $n$, arising from $u$. 

Fix an integer $i \leq n$. For each $m$-tuple  $\mathbf i=(i_1,\ldots, i_m)$  of integers, such that $i_1+\ldots +i_m=i$ and $0\leq i_k\leq n_k$ ($k=1,\ldots, m$), define
another $m$-tuple $\mathbf n-\mathbf i=(n_1- i_1,\ldots, n_m-i_m)$.  Each pair $(n_k-i_k, i_k)$ gives rise to an embedding 
$\mathcal H_{n_k-i_k} \otimes \mathcal H_{i_k} \subseteq \mathcal H_{n_k}$, as in Section \ref{s:jacquet}, and these combine to give an embedding 
\[ 
\mathcal H_{\mathbf n-\mathbf i} \otimes \mathcal H_{\mathbf i} \subseteq \mathcal H_{\mathbf n}
\] 
where $\mathcal H_{\mathbf i} \cong \mathcal H_{i_1} \otimes \ldots \otimes \mathcal H_{i_m}$ etc. (Note, if $i_k=0$, then the corresponding factor is the trivial 
algebra $\mathbb C$.) Abusing notation, we shall identify $\mathcal H_{\mathbf n-\mathbf i}$ with its image in $\mathcal H_{\mathbf n}$ via the map $h\mapsto h\otimes 1$. 
Let $\mathbf S_{\mathbf i}\in \mathcal H_{\mathbf i}$ be the sign projector in $\mathcal H_{\mathbf i}$, 
and let $\mathbf S_{\mathbf i}^{\mathbf n}$ be the image of $1\otimes \mathbf S_{\mathbf i}$ in $\mathcal H_{\mathbf n}$.  Let $\sigma$ be an $\mathcal H_{\mathbf n}$-module. 
Then $\mathbf S_{\mathbf i}^{\mathbf n}(\sigma)$ is naturally an $\mathcal H_{\mathbf n-\mathbf i}$-module. Thus we have a functor 
\[  \mathbf{BZ}^{\mathbf n}_{\mathbf i}(\sigma): =\mathbf S_{\mathbf i}^{\mathbf n}(\sigma) 
\]
 from the category of $\mathcal H_{\mathbf n}$-modules to the category of ${\mathcal H}_{\mathbf n-\mathbf i}$-modules. 

Observe that $\mathcal H_{\mathbf n-\mathbf i}$ is a Levi subalgebra of $\mathcal H_{n-i}$ and 
$\mathcal H_{\mathbf i}$ is a Levi subalgebra of $\mathcal H_{i}$. We are now ready to state the first reduction result. 

\begin{theorem} \label{thm first red bz}
Let $\pi$ be a finite-dimensional $\mathcal H_n$-module annihilated by a power of $\mathcal J_{S_nt}$. Let $\sigma$ be a finite-dimensional $\mathcal H_{\mathbf n}$-module annihilated by a power of $\mathcal J_{S_{\mathbf n}t}$ such that $\pi \cong \iota(\sigma)$ (see Theorem \ref{thm equ cat first red}). Then there is an isomorphism
\begin{align} \label{eqn bz first red} 
\mathbf{BZ}_i(\pi) \cong \bigoplus_{\mathbf i} \mathcal H_{n-i} \otimes_ {{\mathcal H}_{\mathbf n-\mathbf i}}    \mathbf{BZ}^{\mathbf n}_{\mathbf i} ( \sigma ) 
\end{align}
where the sum is taken over all $m$-tuple of integers $\mathbf i=(i_1,\ldots, i_m)$ satisfying $i_1+\ldots +i_m=i$ and $0\leq i_k\leq n_k$ ($k=1,\ldots, m$).
\end{theorem}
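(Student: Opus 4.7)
The approach is a Mackey-type decomposition combined with an application of the sign projector. I would first write $\iota(\sigma)=\mathcal{H}_n\otimes_{\mathcal{H}_{\mathbf{n}}}\sigma$ as a direct sum of $(\mathcal{H}_{n-i}\otimes\mathcal{H}_i)$-modules indexed by double cosets $(S_{n-i}\times S_i)\backslash S_n/S_{\mathbf{n}}$, then apply $\mathbf{S}_i^n=1\otimes\mathbf{S}_i$ to each summand and identify the result with $\mathcal{H}_{n-i}\otimes_{\mathcal{H}_{\mathbf{n}-\mathbf{i}}}\mathbf{BZ}^{\mathbf{n}}_{\mathbf{i}}(\sigma)$.

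For the Mackey step, minimal length representatives $w_{\mathbf{i}}$ of $(S_{n-i}\times S_i)\backslash S_n/S_{\mathbf{n}}$ are parameterized by tuples $\mathbf{i}=(i_1,\ldots,i_m)$ with $\sum_k i_k=i$ and $0\leq i_k\leq n_k$: $w_{\mathbf{i}}$ shuffles exactly $i_k$ entries of the $k$-th block of $\mathbf{n}$ into the last $i$ positions. Length additivity $l(uw_{\mathbf{i}}v)=l(u)+l(w_{\mathbf{i}})+l(v)$ for $u\in S_{n-i}\times S_i$ and $v\in S_{\mathbf{n}}$, together with freeness of $\mathcal{H}_n$ over $\mathcal{H}_{\mathbf{n}}$ via the Bernstein-Lusztig presentation and the purely combinatorial identity $w_{\mathbf{i}}^{-1}(S_{n-i}\times S_i)w_{\mathbf{i}}\cap S_{\mathbf{n}}=S_{\mathbf{n}-\mathbf{i}}\times S_{\mathbf{i}}$, yields
\[ \iota(\sigma)\big|_{\mathcal{H}_{n-i}\otimes\mathcal{H}_i} \;\cong\; \bigoplus_{\mathbf{i}} (\mathcal{H}_{n-i}\otimes\mathcal{H}_i) \otimes_{\mathcal{H}_{\mathbf{n}-\mathbf{i}}\otimes\mathcal{H}_{\mathbf{i}}} \sigma_{\mathbf{i}}, \]
where $\sigma_{\mathbf{i}}$ is $\sigma$ restricted along the Levi embedding $\mathcal{H}_{\mathbf{n}-\mathbf{i}}\otimes\mathcal{H}_{\mathbf{i}}\hookrightarrow \mathcal{H}_{\mathbf{n}}$ arising from the $(n_k-i_k,i_k)$-split of each block.

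Next, applying $\mathbf{S}_i^n$ summand by summand: since $\mathbf{S}_i^n$ is a left multiplier acting only on the $\mathcal{H}_i$-factor, and right multiplication by $\mathcal{H}_{\mathbf{i}}$ commutes with it, the projector passes through the balanced tensor product to give
\[ \mathbf{S}_i^n\bigl((\mathcal{H}_{n-i}\otimes\mathcal{H}_i)\otimes_{\mathcal{H}_{\mathbf{n}-\mathbf{i}}\otimes\mathcal{H}_{\mathbf{i}}}\sigma_{\mathbf{i}}\bigr) \;\cong\; \mathcal{H}_{n-i}\otimes_{\mathcal{H}_{\mathbf{n}-\mathbf{i}}}\bigl(\mathbf{S}_i(\mathcal{H}_i\otimes_{\mathcal{H}_{\mathbf{i}}}\sigma_{\mathbf{i}})\bigr). \]
It remains to verify the auxiliary identity $\mathbf{S}_i(\mathcal{H}_i\otimes_{\mathcal{H}_{\mathbf{i}}}\sigma_{\mathbf{i}})\cong \mathbf{S}_{\mathbf{i}}(\sigma_{\mathbf{i}})=\mathbf{BZ}^{\mathbf{n}}_{\mathbf{i}}(\sigma)$. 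This follows from the Bernstein decomposition $\mathcal{H}_i\cong \mathcal{H}_{S_i}\otimes\mathcal{A}_i$, which exhibits $\mathbf{S}_i\mathcal{H}_i$ as a free right $\mathcal{A}_i$-module of rank one generated by $\mathbf{S}_i$, combined with $\mathbf{S}_iT_s=-\mathbf{S}_i$ for simple reflections $s\in S_{\mathbf{i}}$: the relation $\mathbf{S}_i\otimes T_s m=-\mathbf{S}_i\otimes m$ inside the balanced tensor product identifies $\mathbf{S}_i(\mathcal{H}_i\otimes_{\mathcal{H}_{\mathbf{i}}}\sigma_{\mathbf{i}})$ with the sign-coinvariants of $\sigma_{\mathbf{i}}$ for $\mathcal{H}_{S_{\mathbf{i}}}$, which are exactly $\mathbf{S}_{\mathbf{i}}(\sigma_{\mathbf{i}})$. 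Summing over $\mathbf{i}$ yields (\ref{eqn bz first red}).

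The main obstacle lies in the Mackey step: establishing the decomposition as an honest direct sum at the level of affine Hecke algebras requires verifying that the Bernstein-Lusztig relation coupling $T_s$ and $\theta_x$ is compatible with the $w_{\mathbf{i}}$-double-coset decomposition, and that the $w_{\mathbf{i}}$-twisted restriction of $\sigma$ correctly agrees with $\sigma_{\mathbf{i}}$ along the indicated Levi embedding. The central character hypothesis on $\sigma$ enters throughout to keep everything inside the Bernstein component described by Theorem \ref{thm equ cat first red}.
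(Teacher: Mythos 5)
Your overall route---a Mackey-type decomposition followed by pushing the sign projector through the balanced tensor product---is the same as the paper's, and your second step is essentially sound: it is the paper's appeal to Frobenius reciprocity made explicit, using $\mathbf S_i\mathcal H_i=\mathbf S_i\mathcal A_i$ together with $\mathcal A_i\subset\mathcal H_{\mathbf i}$, freeness of $\mathcal H_i$ as a right $\mathcal H_{\mathbf i}$-module, and semisimplicity of $\mathcal H_{S_{\mathbf i}}$ to pass from sign-coinvariants of $\sigma_{\mathbf i}$ to the isotypic subspace $\mathbf S_{\mathbf i}(\sigma_{\mathbf i})$.

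The genuine gap is in the Mackey step, exactly where you yourself place ``the main obstacle,'' and the repair you sketch is not the one that is needed. Length additivity for the representatives $w_{\mathbf i}$, freeness of $\mathcal H_n$ over $\mathcal H_{\mathbf n}$, and the identity $w_{\mathbf i}^{-1}(S_{n-i}\times S_i)w_{\mathbf i}\cap S_{\mathbf n}=S_{\mathbf n-\mathbf i}\times S_{\mathbf i}$ only produce a \emph{filtration} of $\mathrm{res}^{\mathcal H_n}_{\mathcal H_{n-i}\otimes\mathcal H_i}(\mathcal H_n\otimes_{\mathcal H_{\mathbf n}}\sigma)$ whose subquotients are the modules $(\mathcal H_{n-i}\otimes\mathcal H_i)\otimes_{\mathcal H_{\mathbf n-\mathbf i}\otimes\mathcal H_{\mathbf i}}\sigma_{\mathbf i}$: the subspace spanned by a single double coset, $(\mathcal H_{n-i}\otimes\mathcal H_i)T_{w_{\mathbf i}}\otimes\sigma$, is not stable under the subalgebra, because products $T_sT_{w_{\mathbf i}}$ and the commutation of $T_s$ past $\theta_x$ produce terms supported on lower double cosets. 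Checking ``compatibility of the Bernstein--Lusztig relation with the double-coset decomposition,'' as you propose, would at best re-derive this filtration; it cannot split it, and in general such induced-then-restricted modules are \emph{not} semisimple along the Mackey pieces. The missing ingredient---and the actual role of the central character hypothesis, which you relegate to ``keeping everything inside the Bernstein component''---is that the blocks of $\mathbf n$ are by construction the level sets of the unitary part $u$ of $t$, so the subquotients attached to distinct tuples $\mathbf i$ have pairwise distinct generalized central characters for $\mathcal H_{n-i}\otimes\mathcal H_i$ (the multiset of unitary parts placed in the last $i$ coordinates determines $\mathbf i$); hence all extensions between distinct pieces vanish and the filtration splits, giving the direct sum in (\ref{eqn bz first red}). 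The smallest case already shows the issue: for $n=2$, $\mathbf n=(1,1)$, $i=1$, the module $\mathcal H_2\otimes_{\mathcal A_2}\sigma$ restricted to $\mathcal H_1\otimes\mathcal H_1=\mathcal A_2$ has $1\otimes\sigma$ as a submodule while $T_1\otimes\sigma$ is not $\mathcal A_2$-stable, and the extension splits only because the two generalized $\mathcal A_2$-characters $t$ and $s_1t$ are distinct, not for any combinatorial reason.
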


\begin{proof}
Using the Mackey Lemma for affine Hecke algebras (see \cite[Section 2]{Mi} and \cite{Kl}), 
\begin{align}\label{eqn mackey thm} 
\mathrm{res}^{\mathcal H_n}_{\mathcal H_{n-i} \otimes \mathcal H_i} (\mathcal H_{n} \otimes_{\mathcal H_{\mathbf n}} \sigma) \cong 
\bigoplus_{\mathbf i} (\mathcal H_{n- i} \otimes \mathcal H_{ i}) 
\otimes_{(\mathcal H_{\mathbf n-\mathbf i} \otimes \mathcal H_{\mathbf i})} 
 \left(\mathrm{res}^{\mathcal H_{\mathbf n}}_{\mathcal H_{\mathbf n-\mathbf i} \otimes \mathcal H_{\mathbf i}}  \sigma \right)
\end{align}
where the sum is over $\mathbf i$ as in the statement of the theorem. 
We remark that the Mackey Lemma asserts that the composition factors of $ \mathrm{res}^{\mathcal H_n}_{\mathcal H_{n-i} \otimes \mathcal H_i} (\mathcal H_{n} \otimes_{\mathcal H_u} \sigma)$ are precisely those on the right hand side of the above isomorphism. 
The composition factors are indeed direct summands since their $\mathcal H_{n-i} \otimes \mathcal H_i$-central characters are distinct. 
Furthermore, using the Frobenius reciprocity, we have
\begin{align}\label{eqn sign first red} \mathbf S_i^n ((\mathcal H_{n-i} \otimes \mathcal H_{i}) \otimes_{(\mathcal H_{\mathbf n-\mathbf i} \otimes \mathcal H_{\mathbf i})} \sigma )\cong  \mathcal H_{n-i} \otimes_{{\mathcal H}_{\mathbf n-\mathbf i} } \mathbf S_{\mathbf i}^{\mathbf n}(\sigma). 
\end{align}
 Combining (\ref{eqn mackey thm}) and (\ref{eqn sign first red}), we obtain (\ref{eqn bz first red}).
\end{proof}

\begin{remark}
When $\sigma$ is an irreducible $\mathcal H_{\mathbf n}$-module, then $\sigma \cong \sigma_1 \boxtimes \ldots \boxtimes \sigma_m$ for some irreducible $\mathcal H_{n_k}$-modules $\sigma_k$. In this case, 
\[\mathbf{BZ}^{\mathbf n}_{\mathbf i} ( \sigma ) \cong \mathbf{BZ}_{i_1}(\sigma_1) \boxtimes \ldots \boxtimes \mathbf{BZ}_{i_m}(\sigma_m) .
\]
From this viewpoint, Theorem \ref{thm first red bz} can be seen as a Leibniz rule. 
\end{remark}

 \section{Reduction to graded Hecke algebras} \label{s bz ghaII}
 
 \subsection{Graded affine Hecke algebras } 

We shall now need the graded affine Hecke algebra attached to the root datum $(X,R,X^{\vee},R^{\vee})$. 
   Let $V =X\otimes_{\mathbf Z}  \mathbb{C}$.

\begin{definition} \label{def gah}
\cite[Section 4]{Lu}
The graded affine Hecke algebra $\mathbb{H}=\mathbb{H}(V, R, \Delta, \log q)$ is an associative algebra with the unit over $\mathbb{C}$ generated by the symbols $\left\{ t_w :w \in W \right\}$ and $\left\{ f_v: v \in V \right\}$ satisfying the following relations:
\begin{enumerate}
\item[(1)] The map $w  \mapsto t_w$ from $\mathbb{C}[W]=\bigoplus_{w\in W} \mathbb{C}w  \rightarrow \mathbb{H }$ is an algebra injection,
\item[(2)] The map $v \mapsto f_v$ from $S(V) \rightarrow \mathbb{H}$ is an algebra injection, where $S(V)$ is the symmetric algebra of $V$, 
\item[(3)] writing $v$ for $f_v$ from now on, for $\alpha \in \Delta$ and $v \in V$,
\[    vt_{s_{\alpha}}-t_{s_{\alpha}}s_{\alpha}(v)=\log q \cdot \langle v, \alpha^{\vee} \rangle .\]
\end{enumerate}
\end{definition}

In particular, $\mathbb{H} \cong S(V) \otimes \mathbb{C}[W]$ as vector spaces.
We also set $\mathbb{A}=S(V)$, the graded algebra analogue of $\mathcal A$. Let $\mathbb{Z}=\mathbb{A}^{W}$ be the center of $\mathbb{H}$ \cite[Sec. 4]{Lu}. 
Let $V^*=\Hom(X, \mathbb C)$. The central characters of irreducible representations are parameterized by $W$-orbits in $V^*$. If 
$\zeta\in V^*$, let $W\zeta$ denote the corresponding orbit an the central character. Let $\mathbb J_{W\zeta} \subset \mathbb Z$ be the corresponding 
maximal ideal. 

\subsection{Lusztig's second reduction theorem} \label{ss lus srt}

 Let $\mathcal H=\mathcal H( X, R, \Delta,  q)$ be the affine Hecke algebra defined in Section \ref{def aha},
 $\mathcal A\cong \mathbb C[X]$ the commutative subalgebra, and 
  $\mathcal Z\cong \mathbb C[X]^W$ be the center of $\mathcal H$. Let $\mathcal F$ be the quotient field of $\mathcal A$.
   Let $\mathcal H_F \cong \mathcal H_W \otimes_{\mathbb C} \mathcal F$ with the algebra structure naturally extended from $\mathcal H$.

Following Lusztig \cite[Section 5]{Lu}, for $\alpha\in\Delta$, define $\tau_{s_{\alpha}} \in \mathcal H_F$ by 
\[  \tau_{s_{\alpha}}+1=(T_{s_{\alpha}}+1)\mathcal G(\alpha)^{-1} ,\]
where 
\[  \mathcal G(\alpha) = \frac{\theta_{\alpha}q-1}{\theta_{\alpha}-1} \in \mathcal F.
\]
It is shown in \cite[Section 5]{Lu} that the map from $W$ to the units of $\mathcal H_F$ defined by $s_{\alpha} \mapsto \tau_{s_{\alpha}}$ is an injective group homomorphism. 

On the graded Hecke algebra side, let $\mathbb{H}=\mathbb{H}(V, R, \Delta, \log q)$ be as in Definition \ref{def gah}. Let $\mathbb{F}$ be the quotient field of $\mathbb A$ and let $\mathbb{Z}$ be the center of $\mathbb{H}$. Let $\mathbb H_F \cong \mathbb H_W \otimes_{\mathbb C} \mathbb F$ with the algebra structure naturally extended from  $\mathbb H$. 
For $\alpha \in \Delta$, define $\overline{\tau}_{s_{\alpha}} \in \mathbb H_F$ by 
\[  \overline{\tau}_{s_{\alpha}}+1=(t_{s_{\alpha}}+1)g(\alpha)^{-1}, \]
where
\[  g(\alpha)= \frac{\alpha+\log q}{\alpha} \in \mathbb F . 
\]
As in the affine case, the map from $W$ to the units of $\mathbb H_F$ defined by $s_{\alpha} \mapsto \overline{\tau}_{s_{\alpha}}$ is an injective group homomorphism.

 Any  $\zeta \in V^*$ defines $t \in \mathbb{T}=\mathrm{Hom}(X, \mathbb{C}^{\times})$ by 
 $x(t)=e^{x(\zeta)}$, for all $x\in X$.  We shall express this relationship by $t=\exp(\zeta)$. We shall say that  $\zeta$ is {\em real} for the root system $R$ if 
   $\alpha(\zeta) \in \mathbb R$ for all $\alpha\in R$. Then $t=\exp(\zeta)$ satisfies 
  $\alpha(t) >0$, for all $\alpha\in R$. Conversely, every such $t$ arises in this fashion, from a real $\zeta$. 
Let $\widehat{\mathcal Z}$ be the $\mathcal J_{Wt}$-adic completion of $\mathcal Z$ and let $\widehat{\mathbb{Z}}$ be the $\mathbb{J}_{W\zeta}$-adic completion of $\mathbb{Z}$. Let $\widehat{\mathcal H}=\widehat{\mathcal Z} \otimes_{\mathcal Z} \mathcal H$ and let  $\widehat{\mathbb H}=\widehat{\mathbb Z} \otimes_{\mathbb Z} \mathbb H$. Let $\widehat{\mathcal H}_F=\widehat{\mathcal Z} \otimes_{\mathcal Z} \mathcal H_F$ and let  $\widehat{\mathbb H}_F=\widehat{\mathbb Z} \otimes_{\mathbb Z} \mathbb H_F$. Let $\widehat{\mathcal A}=\widehat{\mathcal Z} \otimes_{\mathcal Z} \mathcal A$ and let  $\widehat{\mathbb A}=\widehat{\mathbb Z} \otimes_{\mathbb Z} \mathbb A$. Let $\widehat{\mathcal J}_{Wt}=\widehat{\mathcal Z}\otimes_{\mathcal Z} \mathcal J_{Wt}$ and let $\widehat{\mathbb J}_{W\zeta}=\widehat{\mathbb Z}\otimes_{\mathbb Z}\mathbb J_{W\zeta}$.

\begin{theorem} \label{thm lusztig red thm}\cite[Theorem 9.3, Section 9.6]{Lu} Recall that we are assuming  that $\zeta\in V^*$ is real for the root system $R$. 
\begin{enumerate}
\item There is an isomorphism denoted $j$ between $\widehat{\mathcal H}_F$ and $\widehat{\mathbb{H}}_F$ determined by 
\[  j(\tau_{s_{\alpha}}) = \overline{\tau}_{s_{\alpha}} ,   \quad   j(\theta_{x})=e^{x}. \] 
\item The above map also induces isomorphisms between $\widehat{\mathcal Z}$ and $\widehat{\mathbb Z}$, between $ \widehat{\mathcal A}$ and $\widehat{\mathbb A}$ and between $\widehat{\mathcal H}$ and $\widehat{\mathbb{H}}$. 
\end{enumerate}
\end{theorem}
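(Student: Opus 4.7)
The plan is to build $j$ in two stages: first a commutative algebra isomorphism $\widehat{\mathcal A} \cong \widehat{\mathbb A}$ (which restricts to $\widehat{\mathcal Z} \cong \widehat{\mathbb Z}$), and then an extension to the full algebras using the Weyl-group-like elements $\tau_{s_\alpha}$ and $\overline{\tau}_{s_\alpha}$. The reality hypothesis on $\zeta$ enters essentially in both stages.

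For Stage 1, I would use the primary decomposition of $\widehat{\mathcal A}$ over $\widehat{\mathcal Z}$ as a direct product of local factors indexed by $t' \in Wt$, and similarly for $\widehat{\mathbb A}$ over $\widehat{\mathbb Z}$ indexed by $\zeta' \in W\zeta$. Since $\zeta$ is real, the map $\exp$ is a bijection between $W\zeta$ and $Wt$, and I use it to match primary components. On the local factor at $t'$, each $\theta_x$ takes the form $x(t')\cdot(1+\epsilon)$ with $\epsilon$ topologically nilpotent, so $\log(\theta_x/x(t'))$ converges in $\widehat{\mathbb A}$. I then set
\[ j(\theta_x) = x(\zeta') + \log(\theta_x/x(t')) \]
on the factor at $t'$, with $\zeta'$ the chosen preimage of $t'$. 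Multiplicativity follows from the standard identity $\log(ab)=\log(a)+\log(b)$ in a complete local ring, and the inverse is the obvious exponential map. Restricting to $W$-invariants yields $\widehat{\mathcal Z} \cong \widehat{\mathbb Z}$.

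For Stage 2, both $\mathcal H_F$ and $\mathbb H_F$ admit a crossed-product description: by the facts recalled from \cite[Section 5]{Lu}, one has $\tau_{s_\alpha} f = s_\alpha(f)\tau_{s_\alpha}$ for $f\in\mathcal F$, the $\tau_{s_\alpha}$ satisfy the braid relations of $W$, and together they yield a decomposition $\mathcal H_F = \bigoplus_{w\in W} \mathcal F\, \tau_w$; the analogous statement holds for $\overline{\tau}_{s_\alpha}$ over $\mathbb F$. Declaring $j(\tau_{s_\alpha})=\overline{\tau}_{s_\alpha}$ extends Stage 1 to a well-defined algebra map on the completed localizations; the only relation to check is the cross-relation $\tau_{s_\alpha} f = s_\alpha(f)\tau_{s_\alpha}$, which is preserved because $j$ is $W$-equivariant on $\widehat{\mathcal A}_F$. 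Reading the formulas backwards yields the two-sided inverse, proving part (1). For part (2) the additional content is that $j$ descends to the unlocalized subalgebras: it automatically sends $\widehat{\mathcal A}$ to $\widehat{\mathbb A}$ by construction, and sends $\widehat{\mathcal H}$ to $\widehat{\mathbb H}$ once one verifies that $\mathcal G(\alpha)$ is a unit in each local factor of $\widehat{\mathcal A}$, which is guaranteed by $\alpha(t)>0$ from the reality of $\zeta$.

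The main obstacle is the honest verification in Stage 1 that $\log$ and $\exp$ give mutually inverse algebra isomorphisms across all primary components simultaneously, canonically with respect to the $W$-action. Since the completion is at the full orbit $Wt$, not just at $t$, one must track how $W$ permutes the pieces and show that the definition of $j$ does not depend on the ordering of preimages chosen in $W\zeta$. It is precisely the real condition on $\zeta$ that makes this canonical: without reality, the fibers of $\exp$ over an element of $Wt$ form torsors under an integer lattice and no canonical choice of $\zeta'$ is available.
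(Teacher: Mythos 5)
Your overall architecture (complete at the orbit, match primary components of $\widehat{\mathcal A}$ and $\widehat{\mathbb A}$ via $\exp$ using reality, then extend over the crossed-product decompositions $\mathcal H_F=\bigoplus_w \mathcal F\tau_w$ and $\mathbb H_F=\bigoplus_w \mathbb F\overline{\tau}_w$) is the right skeleton, and is essentially how Lusztig's argument, which the paper simply cites, goes. (Two presentational slips in Stage 1: as literally written, $j(\theta_x)=x(\zeta')+\log(\theta_x/x(t'))$ is a formula for the inverse map on the generator $x\in V$, not for $j(\theta_x)$, and what $\log(ab)=\log a+\log b$ then gives you is additivity in $x$, which is what is actually needed; the map in the stated direction is just $\theta_x\mapsto e^{x(\zeta')}e^{x-x(\zeta')}=e^x$.)

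The genuine gap is in your last step for part (2). You claim that $\mathcal G(\alpha)$ is a unit in each local factor of $\widehat{\mathcal A}$ and that this is guaranteed by $\alpha(t)>0$. That is false in exactly the cases of interest: if the orbit $Wt$ contains $t'$ with $\alpha(t')=1$ (i.e.\ $\alpha(\zeta')=0$, which positivity does not exclude), then $\theta_\alpha-1$ lies in the maximal ideal at $t'$ and $\mathcal G(\alpha)=(q\theta_\alpha-1)/(\theta_\alpha-1)$ is not even an element of that local factor; if $\alpha(t')=q^{-1}$, then $\mathcal G(\alpha)$ lies in the maximal ideal and is not a unit. What the argument actually requires—since $j(T_{s_\alpha})+1=(t_{s_\alpha}+1)\,g(\alpha)^{-1}j(\mathcal G(\alpha))$—is that the combined ratio $g(\alpha)^{-1}j(\mathcal G(\alpha))=\frac{e^{\alpha}q-1}{e^{\alpha}-1}\cdot\frac{\alpha}{\alpha+\log q}$ belongs to $\widehat{\mathbb A}$ and is invertible there: the simple pole of $\frac{e^{\alpha}q-1}{e^{\alpha}-1}$ at $\alpha(\zeta')=0$ is cancelled by the zero of $\alpha$, and the pole of $\frac{1}{\alpha+\log q}$ at $\alpha(\zeta')=-\log q$ is cancelled by the zero of $e^{\alpha}q-1$, while reality of $\zeta$ rules out the uncancelled zeros and poles that would occur at $\alpha(\zeta')\in 2\pi i\mathbb Z\setminus\{0\}$ or $\alpha(\zeta')\in-\log q+2\pi i\mathbb Z\setminus\{-\log q\}$. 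This holomorphy-and-nonvanishing of the ratio at every $\zeta'\in W\zeta$ is precisely the point the paper singles out as the crucial ingredient for (2) (and, symmetrically, one needs the inverse ratio to handle $j^{-1}$), so your proof as written fails at the one step that carries the real content; replacing the unit claim about $\mathcal G(\alpha)$ by the unit claim about the ratio repairs it.
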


A crucial point for the proof of (2) is the fact that
\[ \frac{e^{\alpha}q-1}{e^{\alpha}-1} \cdot\frac{\alpha}{\alpha+\log q} \in \mathbb{F}
\]
is holomorphic and non-vanishing at any $\zeta' \in W\zeta$, and hence is an invertible element in $\widehat{\mathbb{A}}$. 

Now (2) gives the following isomorphisms:
\[ \mathcal H/\mathcal J_{Wt}^i\mathcal H \cong \widehat{\mathcal H}/ \widehat{\mathcal J}_{Wt}^i\widehat{\mathcal H} \cong \widehat{\mathbb H}/ \widehat{\mathbb J}_{W\zeta}^i\widehat{\mathbb H} \cong \mathbb H/\mathbb J_{W\zeta}^i\mathbb H 
\]
and hence:

\begin{theorem} \label{thm equiv cat} \cite[Section 10]{Lu} Assume that $\zeta\in V^*$ is real. 
There is an equivalence of categories between the category of finite-dimensional 
$\mathbb{H}$-modules annihilated by a power of $\mathbb J_{W\zeta}$
and the category of finite-dimensional $\mathcal H$-modules annihilated by a power of $\mathcal J_{Wt}$, where $t=\exp(\zeta)$. 
\end{theorem}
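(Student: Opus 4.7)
The plan is to deduce the equivalence formally from the chain of algebra isomorphisms displayed immediately before the theorem, namely
\[
\mathcal H/\mathcal J_{Wt}^i\mathcal H \;\cong\; \widehat{\mathcal H}/\widehat{\mathcal J}_{Wt}^i\widehat{\mathcal H} \;\cong\; \widehat{\mathbb H}/\widehat{\mathbb J}_{W\zeta}^i\widehat{\mathbb H} \;\cong\; \mathbb H/\mathbb J_{W\zeta}^i\mathbb H,
\]
which come out of Theorem \ref{thm lusztig red thm}(2). First I would verify that the isomorphism $\widehat{\mathcal Z}\cong\widehat{\mathbb Z}$ of Theorem \ref{thm lusztig red thm}(2) carries $\widehat{\mathcal J}_{Wt}$ onto $\widehat{\mathbb J}_{W\zeta}$: both completions are semi-local, with maximal ideals indexed by the points of the respective $W$-orbits, and the relation $t=\exp(\zeta)$ together with the formulas $j(\theta_x)=e^x$ shows that these maximal ideals are matched, so their intersections (namely $\widehat{\mathcal J}_{Wt}$ and $\widehat{\mathbb J}_{W\zeta}$) correspond. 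The outer two isomorphisms of the displayed chain are the usual ones between a ring modulo a power of an ideal and its adic completion modulo the same power of the completed ideal; they are valid since $\mathcal H$ is a finite module over $\mathcal Z$ (and similarly $\mathbb H$ over $\mathbb Z$), which also shows that each of the four algebras is finite-dimensional over $\mathbb C$.

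Let $\Phi_i: \mathcal H/\mathcal J_{Wt}^i\mathcal H \xrightarrow{\sim} \mathbb H/\mathbb J_{W\zeta}^i\mathbb H$ denote the composite isomorphism. Next I would note that any finite-dimensional $\mathcal H$-module $\pi$ annihilated by a power of $\mathcal J_{Wt}$ is annihilated by $\mathcal J_{Wt}^i$ for some $i$, hence factors through $\mathcal H/\mathcal J_{Wt}^i\mathcal H$; the same holds on the $\mathbb H$-side. Define the functor $F$ by sending such a $\pi$ to the same underlying vector space equipped with the $\mathbb H$-action pulled back along $\Phi_i$; morphisms are sent to themselves. The inverse functor $F'$ is defined analogously with $\Phi_i^{-1}$.

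To see that $F$ and $F'$ are well defined and mutually quasi-inverse, one needs the coherence of the tower $\{\Phi_i\}$: the canonical surjections $\mathcal H/\mathcal J_{Wt}^{i+1}\mathcal H\twoheadrightarrow \mathcal H/\mathcal J_{Wt}^i\mathcal H$ and the analogous ones on the $\mathbb H$-side intertwine $\Phi_{i+1}$ and $\Phi_i$. This is automatic because each $\Phi_i$ is obtained from the single isomorphism $\widehat{\mathcal H}\cong\widehat{\mathbb H}$ of Theorem \ref{thm lusztig red thm}(2) by quotienting by $\widehat{\mathcal J}_{Wt}^i$ and $\widehat{\mathbb J}_{W\zeta}^i$ respectively. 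Coherence guarantees that the action on $\pi$ transported via $\Phi_i$ agrees with the one transported via $\Phi_{i+1}$, so the definition is independent of the choice of $i$, and $F\circ F'$ and $F'\circ F$ are both the identity.

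The substantive content of the theorem is the existence of the isomorphism in Theorem \ref{thm lusztig red thm}(2), which is already granted; the only potential obstacle in the present argument is the matching of ideals $\widehat{\mathcal J}_{Wt}\leftrightarrow\widehat{\mathbb J}_{W\zeta}$ under $j$. Once that is verified, the equivalence of categories is a formal consequence of the elementary fact that an isomorphism between two finite-dimensional $\mathbb C$-algebras induces an equivalence between their module categories, plus the compatibility of the tower of quotients with the adic filtration.
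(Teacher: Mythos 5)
Your argument is essentially the paper's own: the paper deduces the theorem exactly from the chain $\mathcal H/\mathcal J_{Wt}^i\mathcal H \cong \widehat{\mathcal H}/\widehat{\mathcal J}_{Wt}^i\widehat{\mathcal H} \cong \widehat{\mathbb H}/\widehat{\mathbb J}_{W\zeta}^i\widehat{\mathbb H} \cong \mathbb H/\mathbb J_{W\zeta}^i\mathbb H$ furnished by Theorem \ref{thm lusztig red thm}(2), and you merely make the formal bookkeeping (factoring through the quotients, coherence of the tower) explicit. One small correction: $\widehat{\mathcal Z}$ is local rather than semi-local, since $\mathcal J_{Wt}$ is a maximal ideal of $\mathcal Z$ (it is $\widehat{\mathcal A}$ whose maximal ideals are indexed by the orbit), which only makes your ideal-matching step easier, as any isomorphism of local rings carries the maximal ideal to the maximal ideal.
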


Let $\Lambda$ be the functor in Theorem \ref{thm equiv cat}.  Explicitly, for a finite-dimensional $\mathbb H$-module annihilated by a power of $\mathbb J_{W\zeta}$, 
$\Lambda(\pi)$ is equal to $\pi$, as vector spaces,  but the $\mathcal H$-action on $\pi$ is given by 
\[   h \cdot_{\mathcal {H}} x = j (h) \cdot_{\widehat{\mathbb H}} x,
\]
where $h \in \mathcal{H}$ and $x\in \pi$. Note that the functor extends to the category of finite-dimensional $\mathbb H$-modules that are sums of $\mathbb H$-modules, 
where each summand is annihilated by a power of $\mathbb J_{W\zeta}$ for some real $\zeta$. 

 \begin{proposition} \label{prop translate sign} Recall the sign projector $\mathbf S=(\sum_{w \in W} (1/q)^{l(w)})^{-1} \sum_{w \in W} (-1/q)^{l(w)} T_w$ in $\mathcal H$ and let 
$\mathbf s=|W|^{-1}\sum_{w \in W} (-1)^{l(w)} t_w$ be the sign projector in $\mathbb H$.  Then $j(\mathbf S)=a\cdot \mathbf s$, 
where $a$ is an invertible element in $ \widehat{\mathbb A}$.  

\end{proposition}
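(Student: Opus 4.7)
The plan is to show that the left ideals $\widehat{\mathbb{H}}\,j(\mathbf S)$ and $\widehat{\mathbb{H}}\,\mathbf s$ coincide and are each a free $\widehat{\mathbb{A}}$-module of rank one; this forces $j(\mathbf S)=a\,\mathbf s$ for some $a\in\widehat{\mathbb{A}}^{\times}$.

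The crucial input is the identity, valid in $\widehat{\mathbb{H}}$ for each simple root $\alpha$,
\[
j(T_{s_\alpha})+1 \;=\; (t_{s_\alpha}+1)\,u_\alpha, \qquad u_\alpha:=\frac{j(\mathcal G(\alpha))}{g(\alpha)},
\]
where $u_\alpha$ is an invertible element of $\widehat{\mathbb{A}}$ by the crucial fact recalled right after Theorem \ref{thm lusztig red thm}. To derive it, I would apply $j$ to the defining relation $T_{s_\alpha}+1=(\tau_{s_\alpha}+1)\mathcal G(\alpha)$, obtaining $j(T_{s_\alpha})+1=(\overline{\tau}_{s_\alpha}+1)\,j(\mathcal G(\alpha))$. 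Since $g(\alpha)$ and $u_\alpha$ lie in a commutative subalgebra of $\widehat{\mathbb{H}}_F$, we may rewrite $j(\mathcal G(\alpha))=g(\alpha)\,u_\alpha$, and then $(\overline{\tau}_{s_\alpha}+1)\,g(\alpha)=t_{s_\alpha}+1$ (from the definition of $\overline{\tau}_{s_\alpha}$) completes the computation.

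Next, a PBW argument together with the relations $\mathbf s\,t_w=(-1)^{l(w)}\mathbf s$ and $\mathbf S\,T_w=(-1)^{l(w)}\mathbf S$ gives $\widehat{\mathbb{H}}\,\mathbf s=\widehat{\mathbb{A}}\,\mathbf s$ and, via $j$ applied to $\widehat{\mathcal H}\,\mathbf S=\widehat{\mathcal A}\,\mathbf S$, also $\widehat{\mathbb{H}}\,j(\mathbf S)=\widehat{\mathbb{A}}\,j(\mathbf S)$, both free of rank one over $\widehat{\mathbb{A}}$. Using idempotency of $\mathbf s$ and $j(\mathbf S)$ one then has the characterizations
\[
\widehat{\mathbb{H}}\,\mathbf s=\{x\in\widehat{\mathbb{H}}: x(t_{s_\alpha}+1)=0 \text{ for all simple }\alpha\},
\]
\[
\widehat{\mathbb{H}}\,j(\mathbf S)=\{x\in\widehat{\mathbb{H}}: x(j(T_{s_\alpha})+1)=0 \text{ for all simple }\alpha\}.
\]
The identity of step one and the invertibility of $u_\alpha$ in $\widehat{\mathbb{H}}$ make these two conditions equivalent, so $\widehat{\mathbb{H}}\,j(\mathbf S)=\widehat{\mathbb{H}}\,\mathbf s$. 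Writing $j(\mathbf S)=a\,\mathbf s$ and $\mathbf s=b\,j(\mathbf S)=ba\,\mathbf s$ for some $a,b\in\widehat{\mathbb{A}}$, the rank-one freeness of $\widehat{\mathbb{A}}\,\mathbf s$ forces $ba=1$, whence $a\in\widehat{\mathbb{A}}^{\times}$.

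The hardest step is producing the identity $j(T_{s_\alpha})+1=(t_{s_\alpha}+1)u_\alpha$ with $u_\alpha$ appearing on the \emph{right}: one must commute $g(\alpha)$ past $u_\alpha$ inside a commutative localization, and must exploit that although $g(\alpha)$ may have poles at $W\zeta$ (so that $g(\alpha)$ itself need not lie in $\widehat{\mathbb{A}}$), the ratio $u_\alpha=j(\mathcal G(\alpha))/g(\alpha)$ does lie in $\widehat{\mathbb{A}}$ and is invertible there.
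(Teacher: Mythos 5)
Your proposal is correct and follows essentially the same route as the paper: the paper applies $j$ to the identity $\mathbf S (T_{s_{\alpha}}+1)\mathcal G(\alpha)^{-1}=0$ to conclude $j(\mathbf S)(t_{s_{\alpha}}+1)=0$, deduces $j(\mathbf S)\in \widehat{\mathbb H}\cdot\mathbf s=\widehat{\mathbb A}\cdot\mathbf s$, and obtains invertibility of $a$ by running the symmetric argument for $j^{-1}$, which is exactly the content of your identity $j(T_{s_\alpha})+1=(t_{s_\alpha}+1)u_\alpha$ (with $u_\alpha$ the invertible element highlighted after Theorem \ref{thm lusztig red thm}), your ideal characterizations, and your $ba=1$ step. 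The differences are only presentational, so no further comment is needed.
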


\begin{proof} Let $\alpha\in \Pi$.  Note that $ \mathbf S (T_{s_{\alpha}} +1) {\mathcal G}(\alpha)^{-1}=0$. Applying $j$ to this equation gives 
\[ 
j(\mathbf S )(t_{s_{\alpha}} +1) g(\alpha)^{-1}=0, 
\] 
hence $j(\mathbf S )(t_{s_{\alpha}} +1)=0$ for $\alpha\in \Pi$. 
This shows that $j(\mathbf S)\in \widehat{\mathbb H}\cdot  {\mathbf s}$. 
 Since $\widehat{\mathbb H}\cdot  {\mathbf s}=\widehat{\mathbb A}\cdot  {\mathbf s}$, we have  $j(\mathbf S)=a\cdot \mathbf s$, for some $a\in \widehat{\mathbb A}$. 
  Using the same argument for $j^{-1}$, we obtain $j^{-1}(\mathbf s)= b \cdot \mathbf S$ for some $b\in \widehat{\mathcal A}$.  Hence $j(b)a=1$  and $a$ is invertible. 
\end{proof}

We have the following corollary to Proposition \ref{prop translate sign}: 

\begin{corollary} \label{cor equality sign }
Let $\pi$ be a finite-dimensional  $\mathbb H$-module annihilated by a power of $\mathcal J_{W\zeta}$, where $\zeta\in V^*$ is real. 
 Identify $\pi$ and $\Lambda(\pi)$ as linear spaces. 
The multiplication by $a\in \widehat{\mathbb A}$ (from Proposition \ref{prop translate sign}) 
 provides a natural isomorphism between the vector spaces  $ \mathbf{s}(\pi)$ and $\mathbf S(\Lambda(\pi))$. 
\end{corollary}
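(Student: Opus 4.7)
The plan is to unwind the definition of the functor $\Lambda$ and apply Proposition \ref{prop translate sign} directly; almost nothing else is needed.

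First I would observe that since $\pi$ is annihilated by a power of $\mathbb J_{W\zeta}$, the $\mathbb H$-action on $\pi$ extends canonically to an action of the completion $\widehat{\mathbb H}=\widehat{\mathbb Z}\otimes_{\mathbb Z}\mathbb H$. In particular, every element of $\widehat{\mathbb A}$, and in particular the element $a$ appearing in Proposition \ref{prop translate sign}, defines a well-defined linear endomorphism of $\pi$. By construction of $\Lambda$, the $\mathcal H$-action on $\Lambda(\pi)$ satisfies $h\cdot_{\mathcal H} x = j(h)\cdot_{\widehat{\mathbb H}} x$ for every $x\in\pi$ and $h\in\mathcal H$.

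Next I would apply this identity to $h=\mathbf S$. Using $j(\mathbf S)=a\cdot\mathbf s$ from Proposition \ref{prop translate sign}, together with the fact that $a\in\widehat{\mathbb A}\subseteq\widehat{\mathbb H}$ commutes past nothing but is certainly well defined as an operator on $\pi$, I obtain
\[
\mathbf S(\Lambda(\pi)) \;=\; j(\mathbf S)\cdot \pi \;=\; (a\cdot\mathbf s)\cdot \pi \;=\; a\cdot\bigl(\mathbf s(\pi)\bigr).
\]
Since $a$ is invertible in $\widehat{\mathbb A}$ (again by Proposition \ref{prop translate sign}), the operator of multiplication by $a$ is an invertible linear endomorphism of $\pi$. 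Restricting this operator to $\mathbf s(\pi)$ therefore yields an injection with image exactly $a\cdot\mathbf s(\pi)=\mathbf S(\Lambda(\pi))$, giving the desired isomorphism of vector spaces.

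There is really no main obstacle here once Proposition \ref{prop translate sign} is in hand; the only conceptual point worth stating carefully is that the action of $\widehat{\mathbb A}$ on $\pi$ is legitimate because $\pi$ is $\mathbb J_{W\zeta}$-torsion, so that the map sending $x$ to $a\cdot x$ is a bona fide linear isomorphism of $\pi$ whose restriction manifestly carries $\mathbf s(\pi)$ onto $\mathbf S(\Lambda(\pi))$. Functoriality in $\pi$ is immediate because $a$ is a single element of $\widehat{\mathbb A}$ independent of $\pi$, so the map commutes with every $\mathbb H$-linear (equivalently, $\mathcal H$-linear after applying $\Lambda$) morphism.
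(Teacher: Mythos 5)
Your argument is correct and is exactly the reasoning the paper intends (the paper states the corollary as immediate from Proposition \ref{prop translate sign}): since $\pi$ is killed by a power of $\mathbb J_{W\zeta}$, the $\widehat{\mathbb H}$-action is defined, so $\mathbf S\cdot_{\mathcal H}\pi=j(\mathbf S)\cdot\pi=a\,\mathbf s(\pi)$, and invertibility of $a\in\widehat{\mathbb A}$ makes multiplication by $a$ a linear bijection from $\mathbf s(\pi)$ onto $\mathbf S(\Lambda(\pi))$, naturally in $\pi$. No gaps; your write-up just makes explicit the completion step the paper leaves implicit.
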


\subsection{Bernstein-Zelevinsky derivatives for graded algebras} 
Let $V_n=X_n \otimes_{\mathbb Z}  \mathbb C$, and $\mathbb{H}_n:=\mathbb{H}(V_n, R_n, \Delta_n, \log q)$. For every $i=0, \ldots, n$, we have a Levi subalgebra 
$\mathbb H_{n-i} \otimes \mathbb H_i$.  Let $\mathbf s_i\in \mathbb H_i$ be the sign projector, and let $\mathbf s^n_i \in \mathbb H_n$ be 
the image of $1\otimes \mathbf s_i$ under the inclusion $\mathbb H_{n-i} \otimes \mathbb H_i\subseteq \mathbb H_n$. 

Let $\pi$ be a finite-dimensional representation of $\mathbb H_n$.  The $i$-th Bernstein-Zelevinsky derivative of $\pi$ is the natural 
$\mathbb H_{n-i}$-module 
\[ 
\mathbf{gBZ}_i(\pi):=\mathbf s_i^n(\pi). 
\] 

Write any $\zeta\in V_n^*= \Hom(X_n, \mathbb C)$ as an $n$-tuple $(\zeta_1, \ldots, \zeta_n)$ where $\zeta_i$ is the value of $\zeta$ on 
the standard basis element $\epsilon_i\in X_n$. In this case $\zeta$ is real for $R_n$ if and only if $\zeta_k-\zeta_l \in \mathbb R$ for all $1\leq k,l\leq n$.  

\begin{theorem} \label{thm real bzder}  Assume that $\zeta\in V_n^*$ is real for the root system $R_n$, and $\pi$ is a finite-dimensional $\mathbb H_n$-module annihilated by a power 
of $\mathbb J_{S_n\zeta}$. There is a natural isomorphism of $\mathcal H_{n-i}$-modules 
$\mathbf{BZ}_i(\Lambda(\pi))$ and $\Lambda(\mathbf{gBZ}_i(\pi))$. 
\end{theorem}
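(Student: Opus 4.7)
The plan is to adapt the argument behind Corollary~\ref{cor equality sign } to the Levi embeddings $\mathbb{H}_{n-i}\otimes\mathbb{H}_i\hookrightarrow\mathbb{H}_n$ and $\mathcal{H}_{n-i}\otimes\mathcal{H}_i\hookrightarrow\mathcal{H}_n$. I identify $\pi$ and $\Lambda(\pi)$ as the same vector space, with the $\mathcal{H}_n$-action on $\Lambda(\pi)$ given by $h\cdot_{\Lambda} x = j_n(h)\cdot x$ through the Lusztig isomorphism $j_n$ at the central characters $(\mathcal{J}_{S_nt},\mathbb{J}_{S_n\zeta})$, where $t=\exp(\zeta)$. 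The goal is to exhibit a natural vector-space isomorphism $\mathbf{s}_i^n(\pi)\xrightarrow{\sim}\mathbf{S}_i^n(\Lambda(\pi))$ and to verify that it is $\mathcal{H}_{n-i}$-equivariant.

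The first and main step is a Levi-compatibility statement for $j_n$. The defining formulas $j_n(\tau_{s_\alpha})=\overline{\tau}_{s_\alpha}$ and $j_n(\theta_x)=e^x$ depend only on the individual root $\alpha$ and element $x\in X_n$, and hence make sense identically for the Levi subalgebras $\mathcal{H}_i,\mathcal{H}_{n-i}\subseteq\mathcal{H}_n$. Restriction of $j_n$ to the appropriate completion of $\mathcal{H}_i$ should therefore coincide with the Lusztig isomorphism $j_i$ for $(\mathcal{H}_i,\mathbb{H}_i)$, and analogously for $\mathcal{H}_{n-i}$. Decomposing $\pi\vert_{\mathbb{H}_i}$ by $\mathbb{Z}_i$-central characters $S_i\zeta''_\alpha$ (all real, because $\zeta$ is real for $R_n$), one then obtains that on each summand the $\mathcal{H}_i$-action on $\Lambda(\pi)\vert_{\mathcal{H}_i}$ is exactly $\Lambda_i$ applied to the corresponding summand of $\pi\vert_{\mathbb{H}_i}$.

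Granted this compatibility, I apply Proposition~\ref{prop translate sign} to the pair $(\mathcal{H}_i,\mathbb{H}_i)$ on each $\alpha$-summand: there exists an invertible element $a_\alpha$ in the appropriate completion of $\mathbb{A}_i$ with $j_i(\mathbf{S}_i)=a_\alpha\cdot\mathbf{s}_i$. Setting $a=\bigoplus_\alpha a_\alpha$, multiplication by $a$ defines a vector-space isomorphism $\mathbf{s}_i^n(\pi)\xrightarrow{\sim}j_n(\mathbf{S}_i^n)\pi=\mathbf{S}_i^n(\Lambda(\pi))$. For $\mathcal{H}_{n-i}$-equivariance, observe that the action on the target is $h\cdot_{\Lambda} y=j_n(h)\cdot y=j_{n-i}(h)\cdot y$ by the first step, so it suffices to show that $a$ commutes with $j_{n-i}(h)$ for $h\in\mathcal{H}_{n-i}$. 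This holds because the two Levi factors $\mathbb{H}_{n-i}$ and $\mathbb{H}_i$ commute in $\mathbb{H}_n$, and $a$ lies in the completion of $\mathbb{A}_i\subseteq\mathbb{H}_i$. Naturality in $\pi$ is immediate from the canonical nature of the construction.

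The hard part will be making the Levi compatibility of $j_n$ precise, which requires careful bookkeeping of the different completions with respect to the various $\mathbb{Z}_n$, $\mathbb{Z}_i$, and $\mathbb{Z}_{n-i}$ central characters that arise; once this compatibility is in hand, the remainder is essentially a Levi refinement of Corollary~\ref{cor equality sign }.
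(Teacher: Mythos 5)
Your proposal is correct and follows essentially the same route as the paper's proof: both rest on the compatibility of the Lusztig isomorphism with the Levi subalgebras, the decomposition of $\pi$ under $\mathbb H_i$ into central-character summands (all real for $R_i$ since $\zeta$ is real for $R_n$), the identification $\mathbf s_i(\cdot)\cong \mathbf S_i(\Lambda(\cdot))$ via the invertible element of $\widehat{\mathbb A}_i$ from Proposition \ref{prop translate sign} (equivalently Corollary \ref{cor equality sign }), and the observation that this element lies in (the completion of) $\mathbb H_i$ and hence commutes with the $\mathcal H_{n-i}$-action. The only difference is one of emphasis: the Levi compatibility you flag as the hard bookkeeping step is asserted in the paper as a one-line remark.
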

\begin{proof} Note that the functor $\Lambda$ commutes with the restriction to Levi subalgebras, that is, we can either restrict to $\mathbb H_{n-i} \otimes \mathbb H_i$ and 
then apply $\Lambda$, or apply $\Lambda$ and then restrict to $\mathcal H_{n-i} \otimes \mathcal H_i$. Decompose $\pi$ under the action of $\mathbb H_i$ 
\[ 
\pi =\oplus \pi_{[S_i \zeta']} 
\] 
where $\pi_{[S_i \zeta']}$ is the summand annihilated by a power of $\mathbb J_{S_i\zeta'}$. Concretely, the sum runs over $S_i$-orbits of the $i$-tuples $\zeta'$ that appear 
as the tail end of the $n$-tuples in the $S_n$-orbit of $\zeta$. We have the corresponding decomposition for the action of $\mathcal H_i$, 
\[ 
\Lambda(\pi) =\oplus \Lambda(\pi)_{[S_i t']} 
\] 
where $t'=\exp(\zeta')$. (The underlying vector spaces of $\pi_{[S_i \zeta']} $ and $\Lambda(\pi)_{[S_i t']}$ are the same.) It follows that $\Lambda(\pi)_{[S_i t']}$ and $\Lambda( \pi_{[S_i \zeta']} )$ are isomorphic $\mathcal H_{n-i} \otimes \mathcal H_i$-modules. 
Recall that $\mathbf S_i^n= 1\otimes \mathbf S_i$ and $ \mathbf s_i^n=1\otimes \mathbf s_i$, where $\mathbf S_i$ and $\mathbf s_i$ are the sign projectors in 
$\mathcal H_i$ and $\mathbb H_i$, respectively. 
Now we have the following isomorphisms of $\mathcal H_{n-i}$-modules 
\[ 
\mathbf S_i^n(\Lambda(\pi)_{[S_i t']}) \cong \mathbf S_i^n( \Lambda( \pi_{[S_i \zeta']} ))\cong \Lambda( \mathbf s_i^n(\pi_{[S_i\zeta']}))
\] 
where the second is furnished by Corollary \ref{cor equality sign }.  This isomorphism is given by the action of an invertible element in 
$\widehat{\mathbb H}_i$ and therefore intertwines $\mathcal H_{n-i}$-action. 
\end{proof} 

\subsection{Second reduction for Bernstein-Zelevinsky derivatives}

In this section, we transfer the problem of computing Bernstein-Zelevinsky derivatives $\mathbf{BZ}_{\mathbf i}^{\mathbf n}$ in Theorem \ref{thm first red bz} to the corresponding problem for graded Hecke algebras. We retain the notation from Sections \ref{s first l red}  and \ref{s translate BZ}. In particular, $\mathbf n=(n_1, \ldots, n_m)$ is a 
partition of $n$, and  we have fixed $t\in \mathbb{T}_n$ such that  $\alpha(t)>0$ for all $\alpha \in R_{\mathbf n}$. 
Then there exists $\zeta\in V_n^*$, real  for the root system $R_{\mathbf n}$, such that $t=\exp(\zeta)$. 
Let 
\[ \mathbb H_{\mathbf n}:= \mathbb{H}(V_n, R_{\mathbf n}, \Delta_{\mathbf n},  \log q)\cong \mathbb H_{n_1} \otimes \ldots \otimes \mathbb H_{n_m}.
\]
Let $\mathbf i=(i_1, \ldots, i_m)$ be an $m$-tuple of integers such that $0\leq i_k\leq n_k$ for all $k$ and 
 $\mathbf n-\mathbf i=(n_1- i_1,\ldots, n_m-i_m)$.  Each pair $(n_k-i_k, i_k)$ gives rise to an embedding 
$\mathbb H_{n_k-i_k} \otimes \mathbb H_{i_k} \subseteq \mathbb H_{n_k}$, and these combine to give an embedding 
\[ 
\mathbb H_{\mathbf n-\mathbf i} \otimes \mathbb H_{\mathbf i} \subseteq \mathbb H_{\mathbf n}
\] 
where $\mathbb H_{\mathbf i} \cong \mathbb H_{i_1} \otimes \ldots \otimes \mathbb H_{i_m}$ etc. 
Abusing notation, we shall identify $\mathbb H_{\mathbf n-\mathbf i}$ with its image in $\mathbb H_{\mathbf n}$ via the map $h\mapsto h\otimes 1$. 
Let $\mathbf s_{\mathbf i}\in \mathbb H_{\mathbf i}$ be the sign projector in $\mathbb H_{\mathbf i}$, 
and let $\mathbf s_{\mathbf i}^{\mathbf n}$ be the image of $1\otimes \mathbf s_{\mathbf i}$ in $\mathbb H_{\mathbf n}$.  Let $\sigma$ be an $\mathbb H_{\mathbf n}$-module. 
Then $\mathbf s_{\mathbf i}^{\mathbf n}(\sigma)$ is naturally an $\mathbb H_{\mathbf n-\mathbf i}$-module. Thus we have a functor 
\[ 
 \mathbf{gBZ}^{\mathbf n}_{\mathbf i}(\sigma): =\mathbf s_{\mathbf i}^{\mathbf n}(\sigma) 
\]
 from the category of $\mathbb H_{\mathbf n}$-modules to the category of ${\mathbb H}_{\mathbf n-\mathbf i}$-modules. The following is proved in the 
 same way as Theorem \ref{thm real bzder}.

\begin{theorem} \label{thm bz derivative graded case} Let $\zeta\in V^*_n$ be real for the root system $R_{\mathbf n}$. 
 Let $\pi$ be a finite-dimensional $\mathbb{H}_{\mathbf n}$-module annihilated by a power of $\mathbb J_{S_{\mathbf n}\zeta}$.
 Then we have a natural isomorphism of $\mathcal H_{\mathbf n-\mathbf i}$-modules 
\[   \mathbf{BZ}_{\mathbf i}^{\mathbf n}(\Lambda(\pi)) \cong \Lambda(\mathbf{gBZ}_{\mathbf i}^{\mathbf n}(\pi)) . 
\]

\end{theorem}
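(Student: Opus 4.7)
The plan is to mimic the argument in Theorem \ref{thm real bzder}, working inside the algebra $\mathcal H_{\mathbf n}$ and its graded counterpart $\mathbb H_{\mathbf n}$, rather than in $\mathcal H_n$ and $\mathbb H_n$. The essential ingredients are all available: since $\zeta$ is real for $R_{\mathbf n}$, Lusztig's second reduction (Theorem \ref{thm equiv cat}) applied to the root datum $(X_n, R_{\mathbf n}, \Delta_{\mathbf n})$ furnishes an equivalence $\Lambda$ between finite-dimensional $\mathbb H_{\mathbf n}$-modules annihilated by a power of $\mathbb J_{S_{\mathbf n}\zeta}$ and finite-dimensional $\mathcal H_{\mathbf n}$-modules annihilated by a power of $\mathcal J_{S_{\mathbf n} t}$, where $t=\exp(\zeta)$. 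Moreover, since $R_{\mathbf i}\subset R_{\mathbf n}$, the analogue of Proposition \ref{prop translate sign} gives that the Lusztig isomorphism $j$ sends the sign projector $\mathbf S_{\mathbf i}$ of $\mathcal H_{\mathbf i}$ to $a\cdot \mathbf s_{\mathbf i}$ for some invertible element $a$ in the completed algebra $\widehat{\mathbb A}$, so Corollary \ref{cor equality sign } carries over verbatim.

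First I would observe that the functor $\Lambda$ is compatible with restriction to the Levi subalgebra $\mathbb H_{\mathbf n-\mathbf i}\otimes \mathbb H_{\mathbf i}\subseteq \mathbb H_{\mathbf n}$. This follows because the Lusztig isomorphism $j$ is defined by the same formulae on $\tau_{s_\alpha}$ (for $\alpha$ a simple root of $R_{\mathbf n-\mathbf i}$ or $R_{\mathbf i}$) and on $\theta_x$ ($x\in X_n$), hence restricts to the corresponding reduction isomorphism for the Levi subalgebra. Thus restricting $\Lambda(\pi)$ to $\mathcal H_{\mathbf n-\mathbf i}\otimes \mathcal H_{\mathbf i}$ agrees with applying $\Lambda$ to $\pi$ viewed as an $\mathbb H_{\mathbf n-\mathbf i}\otimes \mathbb H_{\mathbf i}$-module.

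Next, decompose $\pi$ according to $\mathbb H_{\mathbf i}$-central characters
\[
\pi = \bigoplus_{S_{\mathbf i}\zeta'} \pi_{[S_{\mathbf i}\zeta']},
\]
where the direct sum runs over $S_{\mathbf i}$-orbits of tuples $\zeta'$ obtained by projecting elements of $S_{\mathbf n}\zeta$ onto the coordinates indexing $\mathbb H_{\mathbf i}$. Each $\zeta'$ is again real for the corresponding sub-root system (as it is a subtuple of a tuple real for $R_{\mathbf n}$), and $\Lambda$ intertwines this decomposition with the decomposition of $\Lambda(\pi)$ by $\mathcal H_{\mathbf i}$-central characters $S_{\mathbf i} t'$ with $t'=\exp(\zeta')$.

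Finally, on each summand $\pi_{[S_{\mathbf i}\zeta']}$ the analogue of Corollary \ref{cor equality sign } applied within $\mathbb H_{\mathbf i}$ versus $\mathcal H_{\mathbf i}$ furnishes a natural isomorphism
\[
\mathbf S_{\mathbf i}^{\mathbf n}\bigl(\Lambda(\pi_{[S_{\mathbf i}\zeta']})\bigr)\;\cong\;\Lambda\bigl(\mathbf s_{\mathbf i}^{\mathbf n}(\pi_{[S_{\mathbf i}\zeta']})\bigr),
\]
given by multiplication by the invertible element $a\in \widehat{\mathbb A}$ supplied by Proposition \ref{prop translate sign}. Since $a$ lies in the (completed) commutative subalgebra associated to $\mathbb H_{\mathbf i}$, multiplication by it commutes with the action of $\mathbb H_{\mathbf n-\mathbf i}$, hence the isomorphism is $\mathcal H_{\mathbf n-\mathbf i}$-linear. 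Summing over the $S_{\mathbf i}$-orbits yields the desired isomorphism $\mathbf{BZ}_{\mathbf i}^{\mathbf n}(\Lambda(\pi))\cong \Lambda(\mathbf{gBZ}_{\mathbf i}^{\mathbf n}(\pi))$. The only point requiring care -- but not presenting an obstacle -- is checking that the real condition on $\zeta$ for $R_{\mathbf n}$ suffices to apply Lusztig's reduction both on $\mathbb H_{\mathbf n}$ and, after restriction, on each factor $\mathbb H_{i_k}$ of $\mathbb H_{\mathbf i}$; this is automatic since $R_{\mathbf i}\subset R_{\mathbf n}$.
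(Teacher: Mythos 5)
Your proposal is correct and follows essentially the same route as the paper, which simply states that this theorem "is proved in the same way as Theorem \ref{thm real bzder}": compatibility of $\Lambda$ with restriction to the Levi subalgebra, decomposition by $\mathbb H_{\mathbf i}$-central characters, and the sign-projector comparison of Proposition \ref{prop translate sign} and Corollary \ref{cor equality sign }, with the invertible element commuting with the $\mathbb H_{\mathbf n-\mathbf i}$-action. Your added remarks (that $\Lambda$ here is the reduction for the datum $(X_n,R_{\mathbf n},\Delta_{\mathbf n})$ and that realness descends to $R_{\mathbf i}\subset R_{\mathbf n}$) are exactly the points implicitly needed.
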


\section{Bernstein-Zelevinsky derivatives of Speh representations} \label{s bz speh}

\subsection{Speh modules}

Speh representations of $p$-adic groups were studied extensively by Tadi\'c as a part of studying the unitary dual. We recall the definition of 
(generalized) Speh representations. Let $\bar{n}$ be a partition of $n$, write $\bar{n}^t= (e_1, \ldots ,e_f)$, $e_1\geq \ldots \geq e_f$,  where $t$ is the transpose. 
Let $\mathrm{St}_{e_k}$ be the Steinberg representation of $GL(e_k,F)$ and let $\mathrm{St}_{e_k}'=\nu^{\frac{e_k-1}{2}}\mathrm{St}_{e_k}$ be a twist of $\mathrm{St}_{e_k}$, where $\nu(g)=|\mathrm{det}(g)|_F$. Let $P_{\bar{n}}$ be the standard parabolic subgroup associated to the partition $\bar{n}^t$. Let $\rho(g)=|\mathrm{det}(g)|_F^r$ for some complex number $r$. The unique quotient of the induced representation 
\[ 
\pi_{(\bar{n},\rho)}=\mathrm{Ind}_{P_{\bar{n}}}^{GL(n,F)} (\rho\mathrm{St}_{e_1}' \boxtimes \rho\nu^{-1}\mathrm{St}_{e_2}' \cdots \boxtimes \rho\nu^{-f+1}\mathrm{St}_{e_f}')
\] 
 is the generalized Speh representation associated to ($\bar{n}, \rho$).  If $e_1=e_2=\ldots =e_f$ then $\pi_{\bar{n}}$ is a Speh representation. 
 

Under the Borel-Casselman equivalence, generalized Speh representations correspond to $\mathcal H_n$-modules with single $\mathcal H_{S_n}$-type (see \cite{BC}, \cite{BM2}, \cite{CM}). 
Since these $\mathcal H_n$-modules have real infinitesimal character, we can look at the corresponding modules for the graded algebra $\mathbb{H}_n$. 
Following \cite{BC}, we shall construct intrinsically the modules as follows. For $\kappa=-r \log q$, we have the following Jucys-Murphy elements: for $k=2,\ldots, n$,
\begin{align} \label{eqn jucys murphy}
 JM_k:= - p( t_{s_{1,k}}+\dots + t_{s_{k-1,k}}) +\kappa
\end{align}
and $JM_1=\kappa$, where $p=\log q$. 
It is straightforward to check that the maps $\epsilon_k \mapsto JM_k$ and $t_{w} \mapsto t_w$ define an algebra homomorphism from $\mathbb{H}_n$ to $\mathbb{C}[S_n]$. Let $\sigma_{\bar{n}}$ be the irreducible $\mathbb{C}[S_n]$-module corresponding to $\bar{n}$. For example, the partition $(n)$ defines the trivial representation while $(1,\ldots, 1)$ defines the sign representation.
 Let $\sigma_{(\bar{n},\kappa)}$ be the $\mathbb{H}$-module pulled back from $\sigma_{\bar{n}}$ via the map defined above, where $JM_k$ depends on $\kappa$. 
 This is the  {\it generalized Speh module} associated to $(\bar{n},\kappa)$. The module $\sigma_{(\bar{n},\kappa)}$ corresponds to $\pi_{(\bar{n},\rho)}$ under the 
 Borel-Casselman equivalence and the Lusztig equivalence in Theorem \ref{thm equiv cat}.

Recall that $\mathbf{gBZ}_i(\pi)$ is the $i$-th Bernstein-Zelevinsky derivative of an $\mathbb{H}_n$-module $\pi$.

\begin{lemma} \label{lem restrict speh}
Let $\pi$ be the generalized Speh $\mathbb H_n$-module associated to the datum $(\bar{n},\kappa)$. 
Then $\mathbf{gBZ}_{i}(\pi)$ is a direct sum of generalized Speh $\mathbb H_{n-i}$-modules. 
Moreover, $\epsilon_1$ acts by the constant $\kappa$ on each direct summand of $\mathbf{gBZ}_{i}(\pi)$.
\end{lemma}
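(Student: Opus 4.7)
\medskip

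\noindent\textbf{Proof plan.} The idea is to realize $\pi=\sigma_{(\bar n,\kappa)}$ as the pullback of the irreducible $S_n$-module $\sigma_{\bar n}$ along the algebra homomorphism $\phi_{n,\kappa}\colon \mathbb H_n \to \mathbb C[S_n]$ defined by $\epsilon_k \mapsto JM_k$ and $t_w\mapsto w$, and to reduce the computation of $\mathbf{gBZ}_i(\pi)=\mathbf s_i^n(\pi)$ to a branching-and-Littlewood-Richardson calculation for the symmetric groups.

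First I would unwind $\mathbf s_i^n$ explicitly inside $\mathbb H_n$. Let $S_i^{\mathrm{sh}} \subseteq S_n$ be the subgroup permuting the last $i$ indices $n-i+1,\dots,n$. Since the projector $\mathbf s_i\in \mathbb H_i$ is a scalar multiple of $\sum_{w\in S_i}(-1)^{l(w)}t_w$ and involves no polynomial generators, its image $\mathbf s_i^n$ under the Levi embedding $\mathbb H_{n-i}\otimes \mathbb H_i \hookrightarrow \mathbb H_n$ equals $\tfrac{1}{i!}\sum_{w\in S_i^{\mathrm{sh}}}\sgn(w)\,t_w$, and hence $\phi_{n,\kappa}(\mathbf s_i^n)$ is precisely the sign idempotent of $\mathbb C[S_i^{\mathrm{sh}}]$. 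Applying it to $\sigma_{\bar n}$ therefore cuts out the $\sgn$-isotypic subspace for the $S_i^{\mathrm{sh}}$-action, and the branching rule for $S_n\downarrow S_{n-i}\times S_i^{\mathrm{sh}}$ combined with the Littlewood-Richardson rule give, as $S_{n-i}$-modules,
\[
\mathbf s_i^n(\sigma_{\bar n}) \;\cong\; \bigoplus_{\bar m\,\vdash\, n-i} c^{\bar n}_{\bar m,(1^i)}\,\sigma_{\bar m},
\]
where $c^{\bar n}_{\bar m,(1^i)}$ is the coefficient counting vertical $i$-strips $\bar n/\bar m$.

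The remaining, and I expect only substantive, step is to identify the $\mathbb H_{n-i}$-action on this sum with the direct sum of the generalized Speh structures $\sigma_{(\bar m,\kappa)}$ having the \emph{same} parameter $\kappa$. This reduces to a compatibility of Jucys-Murphy elements: for each $k\leq n-i$, the element $JM_k^{(n)}=-p(t_{s_{1,k}}+\dots+t_{s_{k-1,k}})+\kappa$ in $\mathbb H_n$ only involves transpositions among $\{1,\dots,n-i\}$, so its image in $\mathbb C[S_n]$ coincides with the image of $JM_k^{(n-i)}\in \mathbb H_{n-i}$ under $\phi_{n-i,\kappa}$ followed by the inclusion $\mathbb C[S_{n-i}]\hookrightarrow \mathbb C[S_n]$. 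This shows that the $\mathbb H_{n-i}$-action inherited from the Levi embedding factors through $\phi_{n-i,\kappa}$, so that each irreducible constituent $\sigma_{\bar m}$ in the decomposition above is exactly the pullback defining $\sigma_{(\bar m,\kappa)}$. The \emph{moreover} part then follows at once, since $JM_1^{(n-i)}=\kappa$ by definition, so $\epsilon_1$ acts as the scalar $\kappa$ on each summand.
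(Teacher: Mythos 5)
Your proof is correct and follows essentially the same route as the paper, which disposes of the lemma in one line by appealing to the pullback construction via the Jucys--Murphy elements and the semisimplicity of $\mathbb{C}[S_n]$-modules; your write-up simply makes explicit the key compatibility that $JM_k$ for $k\leq n-i$ involves only transpositions among $\{1,\dots,n-i\}$, so the restricted $\mathbb H_{n-i}$-action factors through $\phi_{n-i,\kappa}$. The Littlewood--Richardson multiplicity computation you include is not needed for the lemma itself but is exactly what the paper uses afterwards in Corollary \ref{cor bz speh}.
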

\begin{proof}
This follows from the construction of generalized Speh modules (see e.g. (\ref{eqn jucys murphy})) and the fact that the category of $\mathbb{C}[S_n]$-modules is semisimple.
\end{proof}

We now recover a result of Lapid-M\'inguez (for the case of generalized Speh modules).

\begin{corollary} \label{cor bz speh}
Let $\pi$ be a generalized Speh representation of $GL(n,F)$ associated to $(\bar{n}, \rho)$. Then $\pi^{(i)}$ is the direct sum of generalized Speh modules associated to 
$(\bar{n}',\rho)$, where $\bar{n}'$ runs through all partitions obtained by removing $i$ boxes from $\bar{n}$ with at most one in each row such that the resulting diagram is still a Young diagram.

\end{corollary}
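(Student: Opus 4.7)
The plan is to transport the problem to the symmetric group by combining Theorem \ref{thm bz aha} with Lusztig's reduction (Theorem \ref{thm real bzder}), and then to read off the answer via Pieri's rule, using Lemma \ref{lem restrict speh} to pin down the Speh parameter.

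First I would set $\kappa = -r\log q$ and let $\sigma = \sigma_{(\bar n,\kappa)}$ be the $\mathbb H_n$-module corresponding to $\pi_{(\bar n,\rho)}$ under the Borel-Casselman equivalence combined with Theorem \ref{thm equiv cat}. Since $\pi_{(\bar n,\rho)}$ is irreducible and hence admissible, Theorem \ref{thm bz aha} together with Theorem \ref{thm real bzder} reduces the computation of $\pi^{(i)}_{(\bar n,\rho)}$ to that of $\mathbf{gBZ}_i(\sigma) = \mathbf{s}_i^n(\sigma)$ as an $\mathbb H_{n-i}$-module.

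Next, I would exploit the construction $\sigma = \sigma_{\bar n} \circ \varphi$, where $\varphi : \mathbb H_n \to \mathbb C[S_n]$ is the homomorphism of Section \ref{s bz speh} sending $t_w \mapsto w$ and $\epsilon_k \mapsto JM_k$ as in (\ref{eqn jucys murphy}). Under $\varphi$, the sign projector $\mathbf s_i^n$ is carried to the sign idempotent of the subgroup $S_i \subset S_n$ acting on the last $i$ indices; hence, as an $S_{n-i}$-representation, $\mathbf{gBZ}_i(\sigma)$ is the $\mathrm{sgn}_i$-isotypic component of $\mathrm{Res}^{S_n}_{S_{n-i}\times S_i}\sigma_{\bar n}$. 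By Pieri's rule (the Littlewood-Richardson coefficient $c^{\bar n}_{\bar n',(1^i)}$), this decomposes as
\[
\bigoplus_{\bar n'} \sigma_{\bar n'},
\]
where $\bar n'$ ranges over partitions obtained from $\bar n$ by removing a vertical $i$-strip (i.e.\ $i$ boxes with at most one in each row, the remainder still a Young diagram), each summand appearing with multiplicity one.

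Finally, Lemma \ref{lem restrict speh} guarantees that each summand of $\mathbf{gBZ}_i(\sigma)$ is already a generalized Speh $\mathbb H_{n-i}$-module on which $\epsilon_1$ acts by $\kappa$; combined with the $S_{n-i}$-type computed above, this forces each summand to be $\sigma_{(\bar n',\kappa)}$. Transporting back via the Lusztig and Borel-Casselman equivalences gives the stated direct sum of $\pi_{(\bar n',\rho)}$. The main obstacle is checking compatibility of the embedding $\mathbb H_{n-i}\otimes \mathbb H_i \hookrightarrow \mathbb H_n$ with the Jucys-Murphy homomorphisms, so that the $\mathbb H_{n-i}$-action on each summand is genuinely pulled back through $\varphi$ with the correct shift $\kappa$; this is exactly the content of Lemma \ref{lem restrict speh} and of the explicit formula (\ref{eqn jucys murphy}).
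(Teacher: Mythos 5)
Your proposal is correct and follows essentially the same route as the paper: reduce via Theorem \ref{thm bz aha} and Theorem \ref{thm real bzder} to computing $\mathbf{gBZ}_i(\sigma_{(\bar n,\kappa)})$, identify this through the Jucys--Murphy homomorphism and Lemma \ref{lem restrict speh} as the $\mathrm{sgn}$-isotypic part of the restricted $S_n$-module, and apply Pieri's rule. You merely spell out explicitly (image of the sign projector, multiplicity one, compatibility of the embedding with the Jucys--Murphy construction) what the paper leaves implicit.
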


\begin{proof}
Since $\Lambda(\sigma_{\bar{n},\kappa})=\pi^{I_n}$ 
  it suffices to compute $\mathbf{gBZ}_i(\sigma_{\bar{n},\kappa})$ by Theorem \ref{thm real bzder}. 
From the observation in Lemma \ref{lem restrict speh}, it suffices to determine the $\mathbb{C}[S_{n-i}]$-module structure of $\mathbf{gBZ}_i(\sigma_{\bar{n},\kappa})$, 
 and this follows from a special case of the Littlewood-Richardson rule (or the Pieri's formula). 
\end{proof}

Generalized Speh modules form a subclass of ladder representations defined by Lapid-M\'inguez \cite{LM}. Bernstein-Zelevinsky derivatives of ladder representations are 
computed there using a determinantal formula of Tadi\'c.

\section{Appendix: Projectivity of Gelfand-Graev representation}

In this appendix we shall prove that the Gelfand-Graev representation of a quasi-split reductive group is projective. Roughly speaking, this follows from two facts: 
its Bernstein components are finitely generated, and its dual is injective. 
 
\subsection{Some algebra} 
 Let $\mathcal H$ be a $\mathbb C$-algebra with 1, such that its center $\mathcal Z$ is a noetherian algebra, and $\mathcal H$ is a finitely generated $\mathcal Z$-module. 
 In particular, every finitely generated $\mathcal H$-module $\pi$ is also finitely generated  $\mathcal Z$-module. Hence any ascending chain of submodules of $\pi$ stabilizes. 
 It follows that any finitely generated $\mathcal H$-module has irreducible quotients. 
Assume also that $\mathcal H$ is countably dimensional, as a vector space over $\mathbb C$. Then 
 any finitely generated $\mathcal H$-module $\pi$  is countably dimensional. In this situation the Schur lemma holds, that is, 
 if $\pi$ is irreducible then $\pi$ is annihilated by a maximal ideal $\mathcal J$ in $\mathcal Z$. It follows that any irreducible $\mathcal H$-module is finite dimensional.

 Fix a maximal ideal $\mathcal J$ in $\mathcal Z$. Let $\widehat{\mathcal Z}$ be the $\mathcal J$-adic completion of $\mathcal Z$. It is known that $\widehat{\mathcal Z}$ is a flat 
 ${\mathcal Z}$-module \cite{AMD}. 
 If $\pi$ is a $\mathcal Z$-module, let $\widehat{\pi}$ be the $\mathcal J$-adic completion of $\pi$. If 
 $\pi$ is finitely generated then $\widehat{\pi}\cong \widehat{\mathcal Z}\otimes_{\mathcal Z}\pi$. 
 In particular, the $\mathcal J$-adic completion gives an exact functor from the category of finitely generated $\mathcal H$-modules to the 
 category of finitely generated 
 $\widehat{\mathcal H}\cong  \widehat{\mathcal Z}\otimes_{\mathcal Z}\mathcal H$-modules.

 \begin{theorem} \label{T:projective} 
 Assume that $\mathcal H$ satisfies all conditions spelled out above. 
Let $\pi$ be a finitely-generated $\mathcal H$-module. Suppose 
\[ \mathrm{Ext}^i_{\mathcal H}(\pi, \sigma)=0, ~ i\geq 1, 
\]
for all finite-dimensional $\mathcal H$-modules $\sigma$, or $\mathrm{Hom}_{\mathcal H}(\pi, \cdot )$ is an exact functor on the category of 
finite-dimensional  $\mathcal H$-modules. Then $\pi$ is a projective $\mathcal H$-module. 
\end{theorem}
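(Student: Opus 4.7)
The plan is to show $\mathrm{Ext}^1_{\mathcal H}(\pi,\sigma)=0$ for every $\mathcal H$-module $\sigma$, the standard reformulation of projectivity. I will use the hypothesis in the form $\mathrm{Ext}^1_{\mathcal H}(\pi,\sigma)=0$ for every finite-dimensional $\sigma$, and the argument proceeds by three successive reductions.

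First, I would observe that $\mathcal H$ is left noetherian: any left ideal is a $\mathcal Z$-submodule of the finitely generated $\mathcal Z$-module $\mathcal H$, hence finitely generated over $\mathcal Z$, and a fortiori over $\mathcal H$. Thus $\pi$ is finitely presented and admits a resolution $P_\bullet\to\pi$ by finitely generated projective $\mathcal H$-modules, and $\mathrm{Ext}^i_{\mathcal H}(\pi,-)$ commutes with filtered colimits. Writing any $\sigma$ as the filtered colimit of its finitely generated submodules reduces the problem to the case $\sigma$ finitely generated.

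For such $\sigma$, $\mathrm{Ext}^1_{\mathcal H}(\pi,\sigma)$ is a subquotient of $\mathrm{Hom}_{\mathcal H}(P_1,\sigma)$, hence a finitely generated $\mathcal Z$-module; it vanishes iff its $\mathcal J$-adic completion vanishes for every maximal $\mathcal J\subset\mathcal Z$. Flatness of $\widehat{\mathcal Z}$ over $\mathcal Z$ together with finite presentation of $\pi$ yields
\[
\widehat{\mathcal Z}\otimes_{\mathcal Z}\mathrm{Ext}^1_{\mathcal H}(\pi,\sigma)\;\cong\;\mathrm{Ext}^1_{\mathcal H}(\pi,\widehat\sigma),
\]
so it suffices to show $\mathrm{Ext}^1_{\mathcal H}(\pi,\widehat\sigma)=0$ for each maximal $\mathcal J$. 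Write $\widehat\sigma=\varprojlim_n\sigma_n$ with $\sigma_n:=\sigma/\mathcal J^n\sigma$. Each $\sigma_n$ is finite-dimensional over $\mathbb C$: since $\mathcal Z\subseteq\mathcal H$ is countably-dimensional, the residue field $\mathcal Z/\mathcal J$ is countably-dimensional over $\mathbb C$ and hence equals $\mathbb C$ (any nontrivial field extension of $\mathbb C$ is uncountable-dimensional), so $\mathcal Z/\mathcal J^n$ is finite-dimensional and so is $\sigma_n$. Applying $\mathrm{Hom}_{\mathcal H}(P_\bullet,-)$ gives $\mathrm{Hom}_{\mathcal H}(P_\bullet,\widehat\sigma)=\varprojlim_n\mathrm{Hom}_{\mathcal H}(P_\bullet,\sigma_n)$ level by level, and each term $\mathrm{Hom}_{\mathcal H}(P_i,\sigma_n)$ is finite-dimensional. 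The Milnor exact sequence
\[
0\to R^1\!\varprojlim_n\mathrm{Hom}_{\mathcal H}(\pi,\sigma_n)\to\mathrm{Ext}^1_{\mathcal H}(\pi,\widehat\sigma)\to\varprojlim_n\mathrm{Ext}^1_{\mathcal H}(\pi,\sigma_n)\to 0
\]
then reduces the claim to (i) the vanishing of the right-hand $\varprojlim$, which is the hypothesis applied to each finite-dimensional $\sigma_n$, and (ii) the vanishing of the $R^1\varprojlim$-term, which follows because $\{\mathrm{Hom}_{\mathcal H}(\pi,\sigma_n)\}$ is a system of finite-dimensional vector spaces, hence automatically Mittag-Leffler.

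The main obstacle, to my eye, is precisely this interchange of $\mathrm{Ext}^1$ with the inverse limit: everything rests on the finite-dimensionality of $\mathcal Z/\mathcal J^n$, which simultaneously allows applying the hypothesis to each $\sigma_n$ and forces the Mittag-Leffler condition that kills the $R^1\varprojlim$ error term. The remaining ingredients---left-noetherianity of $\mathcal H$ and flatness of $\mathcal J$-adic completion on $\mathcal Z$---are routine once set up.
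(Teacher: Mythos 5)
Your argument is essentially a correct homological repackaging of the paper's proof, but only for the first of the two alternative hypotheses, and the reduction in your opening sentence is where the gap sits. The theorem is an ``or'': one must also show that mere exactness of $\mathrm{Hom}_{\mathcal H}(\pi,\cdot)$ on finite-dimensional modules implies projectivity. Declaring that you will ``use the hypothesis in the form $\mathrm{Ext}^1_{\mathcal H}(\pi,\sigma)=0$ for every finite-dimensional $\sigma$'' silently assumes that the second hypothesis implies the first, and that implication is not available a priori: a class in $\mathrm{Ext}^1_{\mathcal H}(\pi,\sigma)$ is represented by an extension $0\to\sigma\to E\to\pi\to 0$ whose middle term is infinite-dimensional, so exactness of $\mathrm{Hom}(\pi,\cdot)$ on finite-dimensional modules gives no direct handle on it; in this paper the equivalence of the two conditions is a \emph{consequence} of the theorem, not an input. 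The omission matters, because the branch you skip is exactly the one the paper verifies for the Gelfand--Graev representation (via injectivity of the smooth dual and exactness of the Jacquet functor), so your proof as written would not feed the application.

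Everything else checks out and parallels the paper's argument one homological degree up: your reduction to finitely generated $\sigma$, the finite generation of $\mathrm{Ext}^1$ over $\mathcal Z$, the local-to-global principle ``a finitely generated $\mathcal Z$-module vanishes iff all its $\mathcal J$-adic completions do,'' the flat base change $\widehat{\mathcal Z}\otimes_{\mathcal Z}\mathrm{Ext}^1_{\mathcal H}(\pi,\sigma)\cong\mathrm{Ext}^1_{\mathcal H}(\pi,\widehat{\sigma})$ (the $\mathrm{Hom}$-level version is Lemma \ref{lem hom completion}), the finite-dimensionality of $\sigma/\mathcal J^n\sigma$ via $\mathcal Z/\mathcal J=\mathbb C$, and the Mittag--Leffler step, which is a clean packaging of the inductive diagram chase in Lemma \ref{lem completion proj}. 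To close the gap, run the same Mittag--Leffler argument one degree lower: for a surjection $f:\sigma\to\tau$ of finitely generated modules, Hom-exactness on finite-dimensional modules makes each $\mathrm{Hom}_{\mathcal H}(\pi,\sigma/\mathcal J^n\sigma)\to\mathrm{Hom}_{\mathcal H}(\pi,\tau/\mathcal J^n\tau)$ surjective, and the kernels form a tower of finite-dimensional spaces, hence Mittag--Leffler, so $\mathrm{Hom}_{\mathcal H}(\pi,\widehat{\sigma})\to\mathrm{Hom}_{\mathcal H}(\pi,\widehat{\tau})$ is surjective; combining this with your base-change and local-to-global steps applied to the cokernel of $f_*$ (a finitely generated $\mathcal Z$-module) gives surjectivity of $f_*$ itself. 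That is precisely the paper's route, which is why its proof never mentions $\mathrm{Ext}$ and covers both hypotheses at once.
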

 \begin{proof} Suppose we have two $\mathcal H$-modules $\sigma, \tau$ with a surjection $f: \sigma \rightarrow \tau$. 
To show $\pi$ is projective, we have to show  that the map $f_*: \mathrm{Hom}_{\mathcal H}(\pi, \sigma) \rightarrow \mathrm{Hom}_{\mathcal H}(\pi, \tau)$ is surjective. 
Note that, since $\pi$ is finitely generated, we can assume, without loss of generality, that $\sigma$ and $ \tau$ are finitely generated.
 We shall prove firstly a local version of the desired result. 

\begin{lemma} \label{lem completion proj} For every  maximal ideal $\mathcal J$ in $\mathcal Z$, the map 
$ \widehat{f}_*: \mathrm{Hom}_{\widehat{\mathcal H}}(\widehat{\pi}, \widehat{\sigma}) \rightarrow  \mathrm{Hom}_{\widehat{\mathcal H}}(\widehat{\pi}, \widehat{\tau})$ is surjective. 
\end{lemma}

\begin{proof}
Let $\psi \in \mathrm{Hom}_{\widehat{\mathcal H}}(\widehat{\pi}, \widehat{\tau})$. Since $\widehat{\tau}$ is naturally isomorphic to the inverse limit of 
$\tau/\mathcal J^i \tau$, the map $\psi$ is given by a system of 
$\psi_i \in \mathrm{Hom}_{\mathcal H}(\pi, \tau/\mathcal J^i \tau)$, commuting with the canonical projections 
$\tau/\mathcal J^{i+1} \tau\rightarrow \tau/\mathcal J^i \tau$. Thus, in order to find 
$\phi\in  \mathrm{Hom}_{\widehat{\mathcal H}}(\widehat{\pi}, \widehat{\sigma})$  such that $ \widehat{f}_*(\phi)=\psi$, it suffices to find a system of 
$\phi_i \in \mathrm{Hom}_{\mathcal H}(\pi, \sigma/\mathcal J^i \sigma)$, commuting with the canonical projections 
$\sigma/\mathcal J^{i+1} \sigma\rightarrow \sigma/\mathcal J^i \sigma$, such that 
$ f_*^i(\phi_i)=\psi_i$, for all $i$, where 
\[ 
f_*^i: \mathrm{Hom}_{\mathcal H}(\pi, \sigma/ \mathcal J^i\sigma)
 \rightarrow \mathrm{Hom}_{\mathcal H}(\pi, \tau/ \mathcal J^i\tau), 
\]
are naturally arising from $f$. Observe that the quotients $\sigma/ \mathcal J^i\sigma$ and $\tau/ \mathcal J^i\tau$ are finite dimensional, 
since $\sigma$ and $\tau$ are finitely generated. Consider the following commutative diagram:
\[\xymatrix{ 
                  0         \ar[r] & \mathrm{Hom}_{{\mathcal H}}({\pi}, \mathcal J^i \sigma/\mathcal J^{i+1}\sigma)               \ar[d]    \ar[r]         &     \mathrm{Hom}_{{\mathcal H}}(\pi, \sigma/\mathcal J^{i+1}\sigma) \ar[d]^{f_*^{i+1}}  \ar[r] & \mathrm{Hom}_{{\mathcal H}}(\pi, \sigma/\mathcal J^i\sigma) \ar[d]^{f_*^i}  \ar[r] & 0                 \\
    0         \ar[r] & \mathrm{Hom}_{\mathcal H}(\pi, \mathcal J^i\tau/\mathcal J^{i+1}\tau )                             \ar[r]                   &         \mathrm{Hom}_{\mathcal H}(\pi, \tau/\mathcal J^{i+1}\tau)  \ar[r] & \mathrm{Hom}_{\mathcal H}(\pi, \tau/\mathcal J^i\tau)   \ar[r] & 0                 \\ }
\]
The assumption on $\pi$ implies that the vertical maps are surjective and the horizontal sequences are exact. We can now construct the sequence $\phi_i$ by induction. Assume 
that $\phi_i$ has been constructed. Let $\phi'_{i+1}$ be any element in $\mathrm{Hom}_{{\mathcal H}}(\pi, \sigma/\mathcal J^{i+1}\sigma)$ in the fiber 
of $\phi_i$. Then $f_*^{i+1}(\phi'_{i+1})$ may not be equal to $\psi_{i+1}$, however, a simple diagram chase shows that  there exists 
$\epsilon \in \mathrm{Hom}_{{\mathcal H}}(\pi, \sigma/\mathcal J^{i+1}\sigma)$, in the image of  
$\mathrm{Hom}_{{\mathcal H}}({\pi}, \mathcal J^i\sigma/\mathcal J^{i+1}\sigma )$, such that 
$\phi_{i+1}=\phi'_{i+1}+\epsilon$  satisfies all requirements.

\end{proof}

We now pass to a global version of the above result.

\begin{lemma}
Let $\pi_1, \pi_2$ be finitely-generated $\mathcal H$-modules. Then $\mathrm{Hom}_{\mathcal H}(\pi_1, \pi_2)$ is a finitely-generated $\mathcal Z$-module.
\end{lemma}

\begin{proof}
Let $x_1,\ldots, x_r$ be a finite set of generators for $\pi$ as $\mathcal Z$-module. Now we consider a $\mathcal Z$-submodule of $\oplus_{k=1}^r \pi$ given by
\[   \left\{ f(x_1),\ldots, f(x_r) : f \in \mathrm{Hom}_{\mathcal H}(\pi_1, \pi_2) \right\}
\]
By the Noetherian property, a submodule of a finitely-generated module is still finitely-generated. This implies the lemma.
\end{proof}

\begin{lemma}
Let $\pi_1, \pi_2$ be finitely-generated $\mathcal Z$-modules, and $ g: \pi_1 \rightarrow \pi_2$ a morphism of $\mathcal Z$-modules. Then the following statements are equivalent:
\begin{enumerate}
\item $g$ is surjective;
\item with respect to every maximal ideal $\mathcal J$ in $\mathcal Z$, the map $\widehat{g}: \widehat{\pi}_1 \rightarrow \widehat{\pi}_2$, arising naturally from $g$, is surjective.
\end{enumerate}
\end{lemma}

\begin{proof}
(1) implies (2) by the exactness of tensoring with $\widehat{\mathcal Z}$. For (2) implying (1), we proceed by contraposition. So assume that (1) fails. Let 
$\sigma$ be an irreducible quotient of $\pi_2/f(\pi_1)$, and let $\mathcal J$ be the annihilator of $\sigma$. Then (2) fails for the $\mathcal J$-adic completion. 
\end{proof}

\begin{lemma} \label{lem hom completion}
Let $\pi_1, \pi_2$ be finitely-generated $\mathcal H$-modules. Then the natural map 
\[ 
\theta: \widehat{\mathcal Z} \otimes_{\mathcal Z} \mathrm{Hom}_{\mathcal H}(\pi_1, \pi_2) \rightarrow 
\mathrm{Hom}_{\widehat{\mathcal H}}(\widehat{\pi_1}, \widehat{ \pi_2}) 
\]
is an isomorphism. 
\end{lemma}

\begin{proof}
Let
\[  \ldots \rightarrow  \mathcal H^{r_2}\rightarrow  \mathcal H^{r_1} \rightarrow \pi_1 \rightarrow 0
\]
be a finitely generated free resolution for $\pi_1$. Then we have the following commutative diagram: 
 \[\xymatrix{ 
                  0         \ar[r] &       \widehat{\mathcal Z}\otimes_{\mathcal Z}\mathrm{Hom}_{\mathcal H}(\pi_1, \pi_2)        \ar[d]^{\theta}   \ar[r]     &    
                   \widehat{\mathcal Z}\otimes_{\mathcal Z}\mathrm{Hom}_{\mathcal H}(\mathcal H^{r_1}, \pi_2)  \ar[d]  \ar[r] &
               \widehat{\mathcal Z}\otimes_{\mathcal Z}\mathrm{Hom}_{\mathcal H}(\mathcal H^{r_2}, \pi_2)     \ar[d]                \\
    0         \ar[r] &    \mathrm{Hom}_{\widehat{\mathcal H}}(\widehat{\pi}_1,\widehat{\pi}_2)                      \ar[r]                   &        
   \mathrm{Hom}_{\widehat{\mathcal H}}(\widehat{\mathcal H}^{r_1}, \widehat{\pi}_2)    \ar[r]   &    \mathrm{Hom}_{\widehat{\mathcal H}}(\widehat{\mathcal H}^{r_2},\widehat{\pi}_2).          \\ }
\]
The first sequence is exact since $\mathrm{Hom}_{\mathcal H}(\cdot, \pi_2)$ is left exact and tensoring with $\widehat{\mathcal Z}$ is exact. The second sequence is exact 
since tensoring with $\widehat{\mathcal Z}$ is exact and $\mathrm{Hom}_{\widehat{\mathcal H}}(\cdot, \widehat{\pi_2})$ is left exact. Since the last two vertical arrows are 
isomorphisms,  $\theta$ is an isomorphism, because kernels of isomorphic maps are isomorphic.
\end{proof}

We can now  prove that $f_*$ is surjective. 
By the previous two lemmas, it suffices to show that $\widehat{f}_*$  is surjective, for every completion. But this is true by Lemma  \ref{lem completion proj}. This completes the 
proof of the theorem. 
\end{proof}

\subsection{Gelfand-Graev representation} Let $G$ be a quasi-split reductive group over a $p$-adic field. 
Let $K$ be a good open compact subgroup of $G$, as in Corollaire 3.9 in  \cite{BD}.  Let $\mathcal H$ be the Hecke algebra of 
compactly supported $K$-biinvaraint functions on $G$. Then, by \cite{BD}, in particular, Corollaire 3.4 there, the algebra $\mathcal H$ satisfies the conditions spelled out at the beginning of this section. Let $\pi$ be a smooth $G$-module. In order to prove that $\pi$ is projective, by Theorem \ref{T:projective}, it suffices to show the following two bullets: 
\begin{itemize} 
\item For every $K$, the summand of $\pi$ generated by $K$-fixed vectors is finitely generated. 
\item The functor $\mathrm{Hom}_G(\pi, \cdot)$ is exact on the category of finite length modules. 
\end{itemize}  
Let $\pi^*$ denote the smooth dual of $G$. Since $\mathrm{Hom}_G(\pi, \sigma^*)\cong \mathrm{Hom}_G(\sigma, \pi^*)$, the second bullet holds if $\pi^*$ is an injective $G$-module. 
This is true if $\pi$ is a Gelfand-Greav representation, by exactness of the Jacquet functor. The first bullet is also true for the Gelfand-Greav representation, by \cite{BH}. Thus 
we have obtained the following corollary: 

\begin{corollary} \label{C:projective} 
 The Gelfand-Graev representation is projective. 
\end{corollary}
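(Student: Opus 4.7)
The plan is to apply Theorem \ref{T:projective} to the module arising from the Gelfand-Graev representation $\pi = \ind_U^G \psi$ at each good open compact subgroup $K$ of $G$, with $\mathcal H$ the associated Hecke algebra of $K$-biinvariant compactly supported functions on $G$. The preceding discussion has already reduced the projectivity of $\pi$ in $\mathfrak{R}(G)$ to verifying the two bullet conditions, so the task is to check each of these for the Gelfand-Graev representation.

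For the first bullet, I would invoke directly the result of Bushnell--Henniart \cite{BH} that every Bernstein component of $\ind_U^G \psi$ is finitely generated as a smooth $G$-representation. An argument analogous to Lemma \ref{L:finite} then translates this into the statement that for each good open compact $K$, the summand of $\pi$ generated by its $K$-fixed vectors is a finitely generated $\mathcal H$-module.

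For the second bullet, I would use the duality trick already outlined. Since every finite-length smooth module $\sigma$ is reflexive, that is $(\sigma^\vee)^\vee \cong \sigma$, the functor $\Hom_G(\pi, -)$ on the category of finite-length modules is equivalent, via the natural isomorphism $\Hom_G(\pi, \sigma^\vee) \cong \Hom_G(\sigma, \pi^\vee)$, to the functor $\sigma \mapsto \Hom_G(\sigma, \pi^\vee)$. Hence it suffices to show that $\pi^\vee$ is an injective object in $\mathfrak{R}(G)$. Since $U$ is unipotent, hence unimodular, one has $\pi^\vee = (\ind_U^G \psi)^\vee \cong \Ind_U^G \bar\psi$ (smooth induction), and Frobenius reciprocity gives
\[
\Hom_G(\sigma, \Ind_U^G \bar\psi) \cong \Hom_U(\sigma, \bar\psi) \cong (\sigma_{U,\psi})^*.
\]
Exactness of this composition follows from exactness of the twisted Jacquet functor $\sigma \mapsto \sigma_{U,\psi}$ (a quotient construction) combined with exactness of linear dualization, yielding injectivity of $\pi^\vee$.

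The main subtlety is the identification $\pi^\vee \cong \Ind_U^G \bar\psi$ together with the verification that Frobenius reciprocity applies without any modular-character twist; this is precisely where unimodularity of $U$ enters, and while essentially standard, it must be tracked carefully so that no $\delta_U$-factor intrudes on the exactness argument. A secondary but easier point is checking, following \cite{BD}, that the good compact open subgroups $K$ form a cofinal family so that the conditions of Theorem \ref{T:projective} verified at each Hecke algebra $\mathcal H$ assemble to yield projectivity of $\pi$ globally in $\mathfrak{R}(G)$.
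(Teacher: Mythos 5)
Your proposal follows essentially the same route as the paper: reduce to Theorem \ref{T:projective} via the good compact open subgroups of \cite{BD}, verify finite generation of each Bernstein component using \cite{BH} together with a Lemma \ref{L:finite}-type translation, and obtain exactness of $\mathrm{Hom}_G(\pi,\cdot)$ on finite-length modules from injectivity of the smooth dual, which the paper likewise deduces from exactness of the (twisted) Jacquet functor. The extra details you supply (reflexivity of finite-length modules, the identification of the smooth dual with $\mathrm{Ind}_U^G\bar\psi$ without a modular-character twist, cofinality of the good subgroups) are exactly the points the paper leaves implicit, so the argument is correct.
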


 \end{document}